\numberwithin{equation}{subsection}
\newtheorem{theorem}{Theorem}[section]
\newtheorem{corollary}[theorem]{Corollary}
\newtheorem{lemma}[theorem]{Lemma}
\newtheorem{proposition}[theorem]{Proposition}
\newtheorem{definition-proposition}[theorem]{Definition-Proposition}
\newtheorem{conjecture}[theorem]{Conjecture}
\theoremstyle{definition}
\newtheorem{remark}[theorem]{Remark}
\newcommand{\ca}{{\mathcal A}}
\newcommand{\cc}{{\mathcal C}}
\newcommand{\cd}{\mathsf{D}}
\newcommand{\ce}{{\mathcal E}}
\newcommand{\cf}{{\mathcal F}}
\newcommand{\ch}{{\mathcal H}}
\newcommand{\ck}{\mathsf{K}}
\newcommand{\cm}{{\mathcal M}}
\newcommand{\cn}{{\mathcal N}}
\newcommand{\ct}{{\mathcal T}}
\newcommand{\cs}{{\mathcal S}}
\newcommand{\cp}{{\mathcal P}}
\newcommand{\cu}{{\mathcal U}}
\newcommand{\cx}{{\mathcal X}}
\newcommand{\cy}{{\mathcal Y}}
\newcommand{\cz}{{\mathcal Z}}
\newcommand{\MM}{{\mathcal M}}
\newcommand{\NN}{{\mathcal N}}
\renewcommand{\SS}{{\mathcal S}}
\newcommand{\Z}{\mathbb{Z}}
\newcommand{\bo}{\operatorname{b}\nolimits}
\newcommand{\fd}{\mathsf{fd}}
\newcommand{\Mod}{\mathsf{Mod}}
\renewcommand{\mod}{\mathsf{mod}}
\newcommand{\proj}{\mathsf{proj}}
\newcommand{\inj}{\mathsf{inj}}
\newcommand{\CM}{\mathsf{CM}}
\newcommand{\per}{\mathsf{per}}
\newcommand{\Ext}{\operatorname{Ext}\nolimits}
\newcommand{\Hom}{\operatorname{Hom}\nolimits}
\newcommand{\End}{\operatorname{End}\nolimits}
\newcommand{\op}{\operatorname{op}\nolimits}
\newcommand{\RHom}{\mathbf{R}\strut\kern-.2em\operatorname{Hom}\nolimits}
\newcommand{\twosilt}{\operatorname{2-silt}\nolimits}
\newcommand{\twoctilt}{2\strut\kern-.2em\operatorname{-ctilt}\nolimits}
\newcommand{\dctilt}{{\it d}\strut\kern-.2em\operatorname{-ctilt}\nolimits}
\newcommand{\silt}{\operatorname{silt}\nolimits}
\newcommand{\presilt}{\operatorname{presilt}\nolimits}
\newcommand{\thick}{\mathsf{thick}}
\newcommand{\add}{\mathsf{add}}
\begin{document}

\title[Silting reduction and CY reduction]{Silting reduction and Calabi--Yau reduction of triangulated categories}

\author{Osamu Iyama}
\address{O. Iyama: Graduate School of Mathematics, Nagoya University, Chikusa-ku, Nagoya, 464-8602 Japan}
\email{iyama@math.nagoya-u.ac.jp}

\author{Dong Yang}
\address{D. Yang: Department of Mathematics, Nanjing University, 22 Hankou Road, Nanjing 210093, P. R. China}
\email{yangdong@nju.edu.cn}
\date{\today}

\begin{abstract}
We study two kinds of reduction process of triangulated categories, that is, silting reduction and Calabi-Yau reduction.
It is shown that the silting reduction $\ct/\thick\cp$ of a triangulated category $\ct$ with respect to a presilting subcategory $\cp$ can be realized as a certain subfactor category of $\ct$, and that there is a one-to-one correspondence between the set of (pre)silting subcategories of $\ct$ containing $\cp$ and the set of (pre)silting subcategories of $\ct/\thick\cp$. This result is applied to show that Amiot--Guo--Keller's construction of $d$-Calabi--Yau triangulated categories with $d$-cluster-tilting objects takes silting reduction to Calabi--Yau reduction.\\
\textbf{Key words:} silting subcategory, silting reduction, cluster tilting subcategory,
Calabi--Yau reduction, Amiot--Guo--Keller cluster category, co-t-structure, t-structure.\\
\textbf{MSC 2010:} 16E35, 18E30, 16G99, 13F60.
\end{abstract}
\maketitle

\tableofcontents

\section{Introduction}
Derived categories and triangulated categories are ubiquitous in mathematics,  appearing in various areas such as representation theory, algebraic geometry, algebraic topology and mathematical physics.
One of the standard tools to study these categories is tilting theory, which enables us to control equivalences of triangulated categories.
Recently cluster tilting theory, a certain analog of tilting theory in Calabi--Yau triangulated categories, played an important role in the  categorification of cluster algebras of Fomin and Zelevinsky.
Central notions in these theories are silting objects and cluster tilting objects, which admit a categorical operation called mutation to construct a new object from a given one by replacing a direct summand.
It is known that the class of silting objects parametrizes other important structures in a given triangulated category, including co-t-structures, t-structures and simple-minded collections \cite{KellerNicolas11,KY,BY}.

The aim of this paper is to develop further a certain aspect of tilting theory and cluster tilting theory by focusing on two kinds of reduction process of triangulated categories which were studied in representation theory.
One is called \emph{Calabi--Yau reduction}, introduced in  \cite{IyamaYoshino08} (see also \cite{IyamaWemyss}). This is defined for
a $d$-rigid subcategory $\cp$ of a $d$-Calabi--Yau triangulated category $\ct$
as a certain subfactor category $\cu$ of $\ct$.
In this case $\cu$ is again a $d$-Calabi--Yau triangulated category,
and there is a natural bijection between $d$-cluster-tilting subcategories of $\ct$
containing $\cp$ and $d$-cluster-tilting subcategories of $\cu$.

The other one is called \emph{silting reduction}. 
This is defined for a presilting subcategory $\cp$ of a triangulated category $\ct$ as the triangle quotient $\cu=\ct/\thick\cp$. 
Under certain mild conditions (P1) and (P2) in Section~\ref{ss:add-equiv}, our first main result enables us to realise $\cu$ inside of $\ct$ as a certain subfactor category, which is much easier to control than triangle quotients and analogous to Calabi--Yau reduction.

\begin{theorem}[{Theorems \ref{equivalence} and \ref{t:triangle-equivalence}}]\label{thm:main-thm-1}
Let $\ct$ be a triangulated category, $\cp$ a presilting subcategory of $\ct$ satisfying (P1) and (P2) and $\cu=\ct/\thick\cp$.
Then the additive quotient $\frac{\cz}{[\cp]}$ for $\cz=({}^{\perp_\ct}\cp[>\hspace{-3pt}0])\cap(\cp[<\hspace{-3pt}0]{}^{\perp_\ct})$ has a natural structure of a triangulated category (given in Theorem \ref{triangle structure of Z/P}), and we have a triangle equivalence
$\frac{\cz}{[\cp]}\stackrel{\simeq}{\longrightarrow}\cu$.
\end{theorem}

We recover, as a special case of this realisation, the well-known triangle equivalence due to Buchweitz \cite{Buchweitz87}
\[\underline{\CM} A\xrightarrow{\simeq}\cd^{\rm b}(\mod A)/\ck^{\rm b}(\proj A)\]
for an Iwanaga--Gorenstein ring $A$ (Example \ref{buchweitz}). Moreover, there is a natural bijection between silting subcategories of $\ct$
containing $\cp$ and silting subcategories of $\cu$ (Theorem \ref{t:silting-reduction}), which preserves a canonical partial order on the set of silting subcategories (Corollary \ref{iso of posets}).
A similar result was given in \cite[Theorem 2.37]{AI} under the strong restriction that $\thick\cp$ is functorially finite in $\ct$. We can drop this assumption thanks to the realisation of $\cu$ as a subfactor category of $\ct$.

The second main result of this paper is to compare these two reduction processes using Amiot--Guo's construction \cite{Amiot09,Guolingyan11a} (based on Keller's work \cite{Keller05,Keller11}), which is a direct passage from tilting theory to cluster tilting theory. 
Let $\ct$ be a triangulated category, $\cm$ a subcategory of $\ct$ and $\ct^{\fd}\subset\ct$ a triangulated subcategory such that $(\ct,\ct^{\fd},\cm)$ is a $(d+1)$-\emph{Calabi--Yau triple} (see Section~\ref{ss:setup} for the precise definition). We fix a functorially finite subcategory $\cp$ of $\cm$. On the one hand, applying Amiot--Guo--Keller's construction, we obtain a $d$-Calabi--Yau triangulated category $\cc=\ct/\ct^{\fd}$ in which $\cp$ becomes a $d$-rigid subcategory. Then we form the Calabi--Yau reduction $\cc_\cp$ of $\cc$ with respect to $\cp$, which is $d$-Calabi--Yau and in which $\cm$ becomes a $d$-cluster-tilting subcategory. 
On the other hand, we first form the silting reduction $\cu=\ct/\thick\cp$, which turns out to be part of a relative $(d+1)$-Calabi--Yau triple $(\cu,\cu^{\fd},\cm)$. Then Amiot--Guo--Keller's construction yields a $d$-Calabi--Yau triangulated category $\cu/\cu^{\fd}$ in which $\cm$ becomes a $d$-cluster-tilting subcategory. 
We prove that the two resulting $d$-Calabi--Yau triangulated categories $\cc_\cp$ and $\cu/\cu^{\fd}$ are triangle equivalent (Theorem~\ref{main theorem new}). In this sense, Amiot--Guo--Keller's construction takes silting reduction to Calabi--Yau reduction. This can  be illustrated by the following commutative diagram of operations.
\[
 \SelectTips{cm}{10}
\begin{xy}
 0;<1.1pt,0pt>:<0pt,-1.1pt>::
(80,0) *+{\ct} ="0", (0,50) *+{\cu} ="1", (160,50) *+{\cc=\ct/\ct^{\fd}} ="2",
(65,100) *+{\cu/\cu^{\fd}} ="3", (85,100) *+{\simeq} ="4", (100,100) *+{\cc_P} ="5",
(155,15) *+{\text{AGK's}}="6",
(157,25) *+{\text{construction}}="7",
(8,15) *+{\text{silting}}="8",
(8,25) *+{\text{reduction}}="9",
(5,75) *+{\text{AGK's}}="10",
(7,85) *+{\text{construction}}="11",
(163,75) *+{\text{Calabi--Yau}}="12",
(163,85) *+{\text{reduction}}="13",
(220,0) *+{},
"0", {\ar@{~>}, "1"}, {\ar@{~>}, "2"},
"1", {\ar@{~>}, "3"}, 
"2", {\ar@{~>}, "5"},
\end{xy}
\]
The case when $\ct$ is the perfect derived category of a Ginzburg differential graded (=dg) algebra was studied by Keller in \cite[Section 7]{Keller11}.  The diagram above induces a commutative diagram of maps
\[
\begin{xy}
 0;<1.1pt,0pt>:<0pt,-1.1pt>::
(100,0) *+{{\parbox{145pt}{\small silting subcategories of $\ct$ containing $\cp$ as a subcategory}}} ="0", (20,50) *+{{\parbox{120pt}{\small silting subcategories of $\cu$}}} ="1", 
(200,50) *+{{\parbox{155pt}{\small $d$-cluster-tilting subcategories of $\cc$ containing $\cp$ as a subcategory}}} ="2",
(100,100) *+{{\parbox{170pt}{\small $d$-cluster-tilting subcategories of $\cc_\cp$}}} ="3", 
"0", {\ar "1"}, {\ar "2"},
"1", {\ar "3"}, 
"2", {\ar "3"},
\end{xy}
\]
where the two left-going maps are bijections due to respective properties of silting reduction and Calabi--Yau reduction. 

Moreover if $\cm$ has an additive generator, then the two right-going maps above are surjections for $d=1$ and for $d=2$ (due to Keller--Nicol\'as \cite{KellerNicolas11} in the algebraic setting)  (Corollary~\ref{2term-silting and cluster-tilting}).

To prove our results in Section~\ref{s:silting-red-vs-cy-red}, we will prepare in Section~\ref{section:adjacent t-structures} some general observations on t-structures in triangulated categories, which has its own importance.
It is known that any silting subcategory $\cm$ in a triangulated category $\ct$ gives rise to a co-t-structure $(\ct_{\ge0},\ct_{\le0})$ in $\ct$ (see Proposition~\ref{from silting to co-t-structure} for details).
We study the condition that there is a t-structure $(\cx,\cy)$ in $\ct$ satisfying $\cx=\ct_{\le0}$. We prove that this condition is invariant under a suitable change of the silting subcategory $\cm$ (Theorem~\ref{l:shift-t-str 1}).
Moreover, under certain conditions, we prove that this condition is equivalent to its dual, that is, there is a t-structure $(\cx',\cy')$ in $\ct$ satisfying $\cy'=\ct_{\ge0}$ (Theorem~\ref{left-right 1}).
This result is used to simplify the proofs of Amiot--Guo--Keller's fundamental results (Theorem \ref{amiot}).

We remark that more general versions of Theorem~\ref{thm:main-thm-1} have since been established in \cite{Wei15,Li16,N,IY}.  We refer to the work \cite{Jasso} of Jasso for a reduction of support $\tau$-tilting modules and its connection with our silting reduction.

\smallskip
\noindent{\it Acknowledgement:}
The first-named author acknowledges financial support from JSPS Grant-in-Aid for Scientific Research (B) 24340004, (C) 23540045 and (S) 22224001.
The second-named author acknowledges financial support from a JSPS postdoctoral fellowship program (P12318) and from National Natural Science Foundation of China No. 11371196 and No. 11301272. 
Both authors thank Martin Kalck,  Huanhuan Li, Jorge Vit\'oria and Wuzhong Yang for helpful comments and inspiring discussions. They are grateful to Xiao-Wu Chen for pointing out the application Corollary~\ref{cor:relative-singularity-cat} and the fact that Theorems~\ref{equivalence} and \ref{t:triangle-equivalence} generalise Buchweitz' result. They thank a referee for helpful comments which make the paper more readable.

\section{Preliminaries}\label{s:preliminary}

In this section, we fix some notation. We recall the triangle structure of an additive quotient associated to a mutation pair.  We recall the definitions of silting subcategories, silting reduction, cluster-tilting subcategories, Calabi--Yau reduction, t-structures and co-t-structures. We recall derived categories of differential graded (=dg) algebras and Keller's Morita theorem for triangulated categories.

\subsection{Some notation}\label{ss:notation}
For a ring $R$, we denote by $\mod R$ the category of finitely generated right $R$-modules, by $\proj R$ the category of finitely generated projective right $R$-modules, by $\cd^{\rm b}(\mod R)$ the bounded derived category of $\mod R$ and by $\ck^{\rm b}(\proj R)$ the bounded homotopy category of $\proj R$.

Let $\ct$ be an additive category. For morphisms $f:X\to Y$ and $g:Y\to Z$, we denote by $gf:X\to Z$ the composition. We say that $\ct$ is \emph{idempotent complete} if any idempotent morphism $e:X\rightarrow X$ has a kernel. Let $\cs$ be a full subcategory of $\ct$ (for example, an object of $\ct$ will often be considered as a full subcategory with one object). For an object $X$ of $\ct$, we say that a morphism $f:S\to X$ is a \emph{right $\cs$-approximation} of $X$ if $S\in\cs$ and $\Hom_\ct(S',f)$ is surjective for any $S'\in\cs$. We say that $\cs$ is \emph{contravariantly finite} if every object in $\ct$ has a right $\cs$-approximation. Dually, we define \emph{left $\cs$-approximations} and \emph{covariantly finite} subcategories. We say that $\cs$ is \emph{functorially finite} if it is both contravariantly finite and covariantly finite \cite{AuslanderSmalo81}.
For example, if $\ct$ satisfies the following finiteness condition (F), then $\add X$ is a functorially finite subcategory of $\ct$ for any $X\in\ct$.
\begin{itemize}
\item[(F)] $\Hom_\ct(X,Y)$ is finitely generated as an $\End_\ct(X)$-module and as an $\End_\ct(Y)^{\op}$-module.
\end{itemize}
This condition (F) is satisfied if $\ct$ is $k$-linear and Hom-finite for a commutative ring $k$.

Denote by $\add_\ct\cs$ (or simply $\add\cs$) the smallest full subcategory of $\ct$ which contains $\cs$ and which is closed under taking isomorphisms, finite direct sums and direct summands.
Denote by $[\cs]$ the ideal of $\ct$ consisting of morphisms which factor through an object of $\add_{\ct}\cs$  and denote by $\frac{\ct}{[\cs]}$ the corresponding additive quotient of $\ct$ by $\cs$.
Define full subcategories
\begin{eqnarray*}
{}^{\perp_\ct}\cs&:=&\{X\in\ct\ |\ \Hom_{\ct}(X,\cs)=0\},\\
\cs^{\perp_\ct}&:=&\{X\in\ct\ |\ \Hom_{\ct}(\cs,X)=0\}.
\end{eqnarray*}
When it does not cause confusion, we will simply write ${}^\perp\cs$ and $\cs^\perp$.

\smallskip

Let $\ct$ be a triangulated category. We will denote by $[1]$ the shift functor of any triangulated category unless otherwise stated. For two objects $X$ and $Y$ of $\ct$ and an integer $n$, by $\Hom_\ct(X,Y[>\hspace{-3pt}n])=0$ (respectively, $\Hom_\ct(X,Y[\ge\hspace{-3pt}n])=0$, $\Hom_\ct(X,Y[<\hspace{-3pt}n])=0$, $\Hom_\ct(X,Y[\le\hspace{-3pt}n])=0$), we mean $\Hom_\ct(X,Y[i])=0$ for all $i>n$ (respectively, for all $i\geq n$, $i<n$, $i\leq n$).

Let $\cs$ be a full subcategory of $\ct$. We say that $\cs$ is a \emph{thick subcategory} of $\ct$ if it is a triangulated subcategory of $\ct$ which is closed under taking direct summands. In this case, we denote by $\ct/\cs$ the triangle quotient of $\ct$ by $\cs$. In general, we denote by $\thick_\ct\cs$ (or simply $\thick\cs$) the smallest thick subcategory of $\ct$ which contains $\cs$.

Let $\cs$ and $\cs'$ be full subcategories of $\ct$.  By $\Hom_\ct(\cs,\cs')=0$, we mean $\Hom_\ct(S,S')=0$ for all $S\in\cs$ and $S'\in\cs'$. Define
\begin{align*}
\cs*\cs'=\cs*_\ct\cs':=&\{X\in\ct\ |\ \text{there is a triangle } S\rightarrow X\rightarrow S'\rightarrow S[1]\\
& \text{ with } S\in\cs \text{ and } S'\in\cs'\}.
\end{align*}

\subsection{Mutation pairs and cluster-tilting subcategories}\label{section: Mutation pair}
Let $\ct$ be a triangulated category. Let $\cp$ be a full subcategory of $\ct$ such that $\Hom_\ct(\cp,\cp[1])=0$ and let $\cz$ be an extension-closed full subcategory of $\ct$ which contains $\cp$.
Assume that $(\cz,\cz)$ forms a \emph{$\cp$-mutation pair} in the sense of  \cite{IyamaYoshino08}, i.e. the following conditions are satisfied:
\begin{itemize}
\item $\cp\subset\cz$ and $\Hom_\ct(\cp,\cz[1])=0=\Hom_\ct(\cz,\cp[1])$.
\item For any $Z\in\cz$, there exists triangles $Z\to P'\to Z'\to Z[1]$ and $Z''\to P''\to Z\to Z''[1]$ with $P',P''\in\cp$ and $Z',Z''\in\cz$.
\end{itemize}

\begin{theorem}\label{triangle structure of Z/P}
\emph{(\cite[Theorem 4.2]{IyamaYoshino08})}
The category $\frac{\cz}{[\cp]}$ has the structure of a triangulated category
with respect to the following shift functor and triangles:
\begin{itemize}
\item[(a)] For $X\in\cz$, we take a triangle
\[\xymatrix{X\ar[r]^{\iota_X}&P_X\ar[r]& X\langle1\rangle\ar[r]& X[1]}\]
with a (fixed) left $\cp$-approximation $\iota_X$.
Then $\langle1\rangle$ gives a well-defined auto-equivalence of $\frac{\cz}{[\cp]}$,
which is the \emph{shift functor} of $\frac{\cz}{[\cp]}$.
\item[(b)] For a triangle $X\xrightarrow{f} Y\xrightarrow{g} Z\xrightarrow{h}X[1]$
with $X,Y,Z\in\cz$, take the following commutative diagram of triangles:
\begin{equation}\label{define a triangle}
\xymatrix{
X\ar[r]^f\ar@{=}[d]&Y\ar[r]^g\ar[d]&Z\ar[r]^h\ar[d]^a&X[1]\ar@{=}[d]\\
X\ar[r]^{\iota_X}&P_X\ar[r]&X\langle1\rangle\ar[r]&X[1]
}\end{equation}
Then we have a complex $X\xrightarrow{\overline{f}}Y\xrightarrow{\overline{g}}Z
\xrightarrow{\overline{a}}X\langle1\rangle$. We define \emph{triangles} in
$\frac{\cz}{[\cp]}$ as the complexes which are isomorphic to complexes obtained in this way.
\end{itemize}
\end{theorem}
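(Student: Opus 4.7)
The plan is to establish the theorem in three stages: show that $\langle 1\rangle$ is a well-defined autoequivalence of $\frac{\cz}{[\cp]}$; show that the class of triangles in part (b) is well-defined; and verify the axioms (TR1)--(TR4). For the first stage, I would check that $X\langle 1\rangle$ is independent, up to isomorphism in $\frac{\cz}{[\cp]}$, of the chosen left $\cp$-approximation $\iota_X$: any two such approximations factor through each other, and using $\Hom_\ct(\cp,\cp[1])=0$ together with $\Hom_\ct(\cz,\cp[1])=0$ the induced maps between cones turn out to be mutually inverse in $\frac{\cz}{[\cp]}$. Functoriality of $\langle 1\rangle$ follows by lifting $\iota_Y\circ f$ along $\iota_X$, with any two lifts differing by a map factoring through $\cp$; the quasi-inverse $\langle -1\rangle$ is constructed dually from the second mutation-pair axiom, using the triangles $Z''\to P''\to Z\to Z''[1]$.

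For the second stage, the crucial observation is that in diagram \eqref{define a triangle} the middle vertical map $Y\to P_X$ exists because applying $\Hom_\ct(-,P_X)$ to $X\to Y\to Z\to X[1]$ and invoking $\Hom_\ct(\cz,\cp[1])=0$ produces the required lift of $\iota_X$ along $f$. Any two lifts differ by a map factoring through $Z$, so the induced maps $a$ differ by a map factoring through $\cp$; hence $a$ is canonical in $\frac{\cz}{[\cp]}$, and the triangle class is closed under isomorphism.

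For the third stage, the substantive point in (TR1) is to extend any morphism $f:X\to Y$ in $\cz$ to a triangle whose third term also lies in $\cz$. I would form $(f,\iota_X)^T:X\to Y\oplus P_X$ and complete to a triangle $X\to Y\oplus P_X\to C\to X[1]$ in $\ct$. Applying the octahedral axiom to the composition $X\xrightarrow{(f,\iota_X)^T}Y\oplus P_X\to P_X$ with the projection yields a triangle $Y\to C\to X\langle 1\rangle\to Y[1]$, whence $C\in\cz$ by extension-closure. Since $Y\oplus P_X\cong Y$ in $\frac{\cz}{[\cp]}$, this descends to a triangle of the stipulated form; axioms (TR2) and (TR3) follow by standard diagram chases combined with the functoriality of $\langle 1\rangle$.

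The main obstacle will be (TR4). Given composable morphisms $X\to Y\to Z$ in $\cz$, I would build the three constituent cones by the (TR1) recipe, then apply the octahedral axiom in $\ct$ to produce a candidate octahedron whose edges involve both the ambient shift $[1]$ and the new shift $\langle 1\rangle$. Reducing the diagram modulo $[\cp]$ requires reconciling the many choices of lifts through $\cp$-approximations that arise simultaneously on the four faces; this is handled by the uniqueness (up to $[\cp]$) of the comparison map $a$ established in the second stage, applied face by face.
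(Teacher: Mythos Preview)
The paper does not prove this theorem at all: it is quoted from \cite[Theorem 4.2]{IyamaYoshino08} and used as a black box throughout. Your outline is a faithful reconstruction of the original Iyama--Yoshino argument and is correct in substance.

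One small sharpening for your second stage: the quickest way to see that the connecting map $a$ is well-defined in $\frac{\cz}{[\cp]}$ is to use the rightmost square of \eqref{define a triangle} rather than the middle one. Any two fillers $a,a'$ both satisfy that $Z\xrightarrow{a}X\langle1\rangle\to X[1]$ equals $h$, so the composite of $a-a'$ with $X\langle1\rangle\to X[1]$ vanishes; the triangle $P_X\to X\langle1\rangle\to X[1]$ then forces $a-a'$ to factor through $P_X\in\cp$. This avoids tracking how changes in the middle lift propagate through (TR3), which does not give uniqueness on the nose.
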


Let $k$ be a field and $\ct$ be a $k$-linear triangulated category. Let $d\geq 1$ be an integer. Then
$\ct$ is said to be \emph{$d$-Calabi--Yau} if $\ct$ is Hom-finite, and there is a bifunctorial isomorphism for any objects $X$ and $Y$ of $\ct$:
\[
D\Hom_\ct(X,Y)\simeq \Hom_\ct(Y,X[d]),
\]
where $D=\Hom_k(-,k)$ is the $k$-dual.

Assume that $\ct$ is $d$-Calabi--Yau. A full subcategory $\cp$ of $\ct$ is \emph{$d$-rigid} if $\Hom_{\ct}(\cp,\cp[i])=0$ for all $1\leq i\leq d-1$.
It is \emph{$d$-cluster-tilting}  if $\cp$ is functorially finite and 
the following equivalence holds for $X\in\ct$:
\[
\Hom_\ct(\cp,X[i])=0 \text{ for all } 1\leq i\leq d-1\Longleftrightarrow X\in\add\cp.
\]
By \cite[Theorem 3.1(1)]{IyamaYoshino08}, a $d$-rigid subcategory $\cp$ of $\ct$ is $d$-cluster-tilting if and only if $\ct=\cp*\cp[1]*\cdots*\cp[d-1]$ holds.
An object $P$ of $\ct$ is \emph{$d$-rigid} if $\add P$ is a $d$-rigid subcategory, and \emph{$d$-cluster-tilting} if $\add P$ is a $d$-cluster-tilting subcategory.
We point out that $\add P$ is always functorially finite.

Let $\cp$ be a functorially finite $d$-rigid subcategory of $\ct$. Let
\[
\cz:={}^{\perp_\ct} (\cp[1]*\cp[2]*\cdots*\cp[d-1])\ \mbox{ and }\ \ct_\cp:=\frac{\cz}{[\add\cp]}.
\]
Then the additive category $\ct_\cp$, called the \emph{Calabi--Yau reduction} of $\ct$ with respect to $\cp$ in \cite{IyamaYoshino08}, carries a natural structure of a triangulated category, by Theorem~\ref{triangle structure of Z/P}. Moreover,

\begin{theorem} \emph{(\cite[Theorem 4.9]{IyamaYoshino08})}\label{t:cy-reduction}
The projection functor $\cz\rightarrow\ct_\cp$ induces a one-to-one correspondence between the set of $d$-cluster-tilting subcategories of $\ct$ which contains $\cp$ and the set of $d$-cluster-tilting subcategories of $\ct_\cp$.
\end{theorem}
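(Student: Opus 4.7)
The plan is to build explicit maps in both directions between the two sets and verify their mutual inverseness. Write $\pi\colon \cz \to \ct_\cp$ for the canonical projection.

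Given a $d$-cluster-tilting subcategory $\cm$ of $\ct$ containing $\cp$, I would first observe that $\cm \subset \cz$: indeed, $d$-rigidity of $\cm$ combined with $\cp \subset \cm$ gives $\Hom_\ct(\cm,\cp[i])=0$ for $1 \le i \le d-1$, whence $\cm \subset {}^{\perp_\ct}(\cp[1]*\cdots*\cp[d-1]) = \cz$. I then send $\cm$ to $\overline{\cm} := \pi(\cm)$. The crucial technical ingredient is the Hom-isomorphism, valid for $M,N\in\cz$ and $1\le i\le d-1$,
\[
\Hom_\ct(M,N[i])\xrightarrow{\;\sim\;}\Hom_{\ct_\cp}(M,N\langle i\rangle),
\]
which I would prove by induction on $i$, applying $\Hom_\ct(M,-)$ to the defining triangle $N\to P_N\to N\langle 1\rangle\to N[1]$ and using the vanishing $\Hom_\ct(M,\cp[j])=0=\Hom_\ct(\cp,N[j])$ for $1\le j\le d-1$ built into $\cz$. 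This isomorphism immediately transports $d$-rigidity of $\cm$ to $d$-rigidity of $\overline{\cm}$. For the cluster-tilting property of $\overline{\cm}$, I would use that $\cm$ being $d$-cluster-tilting is equivalent to the filtration $\ct=\cm*\cm[1]*\cdots*\cm[d-1]$, and translate this into $\ct_\cp=\overline{\cm}*\overline{\cm}\langle 1\rangle*\cdots*\overline{\cm}\langle d-1\rangle$ via the realization of triangles in $\ct_\cp$ supplied by Theorem~\ref{triangle structure of Z/P}.

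Conversely, for a $d$-cluster-tilting subcategory $\overline{\cu}$ of $\ct_\cp$, I set $\cu := \pi^{-1}(\overline{\cu}) \subset \cz$ (which automatically contains $\cp$). $d$-rigidity of $\cu$ in $\ct$ follows at once from the same Hom-isomorphism. The principal obstacle is the generation property $\ct = \cu*\cu[1]*\cdots*\cu[d-1]$. My strategy is two-step: first, for arbitrary $X\in\ct$, use iterated left/right $\cp$-approximations (available by functorial finiteness of $\cp$) to truncate $X$ down to an object of $\cz$ modulo finitely many extensions by shifts of $\cp \subset \cu$; second, for $X\in\cz$ take the filtration of $\pi(X)$ by $\overline{\cu}$ in $\ct_\cp$ and lift it stage-by-stage back to $\cz$ using the construction of triangles of Theorem~\ref{triangle structure of Z/P} in reverse.

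Mutual inverseness is then essentially formal: $\pi(\pi^{-1}(\overline{\cu}))=\overline{\cu}$ by construction, while $\pi^{-1}(\pi(\cm))=\cm$ relies on the standard fact that two objects of $\cz$ are isomorphic in $\ct_\cp$ iff they become isomorphic after adding direct summands in $\cp$, combined with $\cp\subset\cm$ and closure of $\cm$ under direct summands. The hardest part of the entire argument is the lifting in the backward direction, where one must simultaneously control the filtration and the rigidity compatibly with the triangle structure of Theorem~\ref{triangle structure of Z/P}.
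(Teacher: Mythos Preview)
The paper does not supply its own proof of this statement; it is simply quoted from \cite[Theorem 4.9]{IyamaYoshino08} and used as a black box. There is therefore nothing in the present paper to compare your argument against.

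That said, your outline follows the same architecture as the original Iyama--Yoshino proof: set up the Hom-isomorphism $\Hom_\ct(M,N[i])\cong\Hom_{\ct_\cp}(M,N\langle i\rangle)$ for $M,N\in\cz$ and $1\le i\le d-1$, and transport rigidity and the filtration $\cm*\cm[1]*\cdots*\cm[d-1]$ across it. One point worth making explicit: the vanishing $\Hom_\ct(\cp,N[j])=0$ for $N\in\cz$ is not part of the definition of $\cz$ but follows from the $d$-Calabi--Yau duality, since $\Hom_\ct(\cp,N[j])\cong D\Hom_\ct(N,\cp[d-j])$ and $d-j\in\{1,\dots,d-1\}$. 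In the backward direction, your ``truncation'' step---reducing an arbitrary $X\in\ct$ to an object of $\cz$ at the cost of extensions by $\cp[1],\dots,\cp[d-1]$---amounts to knowing that $\ct=\cz*(\cp[1]*\cdots*\cp[d-1])$, i.e.\ that these form a torsion pair; this is true but uses both functorial finiteness of $\cp$ and the $d$-CY property, and is the one place where your sketch is thin. Once that is in hand, the rest (including functorial finiteness of the resulting subcategories, which is automatic from the filtration characterization) goes through as you describe.
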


We will use the following cluster-Beilinson criterion for triangle equivalence due to Keller--Reiten. 

\begin{proposition} \emph{(\cite[Lemma 4.5]{KellerReiten08})}\label{cluster-Beilinson}
Let $\ct'$ be another $d$-Calabi--Yau triangulated category and let $\cp\subset\ct$ and $\cp'\subset\ct'$ be $d$-cluster-tilting subcategories and $F:\ct\to\ct'$ be a triangle functor.
If $F$ induces an equivalence $\cp\to\cp'$, then $F$ is a triangle equivalence.
\end{proposition}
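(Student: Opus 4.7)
\noindent\emph{Proof proposal.} The plan is to show that $F$ is both fully faithful and essentially surjective, using the $d$-cluster-tilting decompositions $\ct = \cp * \cp[1] * \cdots * \cp[d-1]$ and $\ct' = \cp' * \cp'[1] * \cdots * \cp'[d-1]$ to perform d\'evissage throughout.

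For fully faithfulness I would first treat the case $X \in \cp$ with $Y \in \ct$ arbitrary. I write $Y$ as an iterated extension $W_0 \to W_1 \to \cdots \to W_{d-1} = Y$ with triangles $W_{i-1} \to W_i \to R_i[i] \to W_{i-1}[1]$ and $W_0, R_i \in \cp$, and induct on $i$ by applying the five-lemma to the long exact sequences obtained from $\Hom_\ct(X,-)$ and $\Hom_{\ct'}(FX,F-)$. The base case $i = 0$ is exactly the hypothesis. In the inductive step, the two surrounding terms of the form $\Hom_\ct(X, R_i[k])$ for $k \in \{i-1, i\}$ are handled either by the rigidity vanishing $\Hom_\ct(\cp, \cp[k]) = 0$ for $1 \leq k \leq d-1$ or by the $k = 0$ case of the hypothesis, while $\Hom_\ct(X, W_{i-1})$ is an isomorphism by induction. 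The critical remaining term is $\Hom_\ct(X, W_{i-1}[1])$: the key observation is that $W_{i-1}[1] \in \cp[1] * \cdots * \cp[i]$ with $i \leq d - 1$, so the LES of $\Hom_\ct(X,-)$ along this shifted filtration has every layer zero by rigidity, and the same holds in $\ct'$.

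Next, for arbitrary $X, Y \in \ct$, I filter $X = X_{d-1}$ by triangles $X_{j-1} \to X_j \to Q_j[j] \to X_{j-1}[1]$ with $X_0, Q_j \in \cp$ and induct on $j$. I strengthen the inductive claim to assert bijectivity of $F$ on $\Hom_\ct(X_j, Z)$ for \emph{every} $Z \in \ct$ simultaneously, which is needed because the contravariant LES produces the shifted term $\Hom_\ct(X_{j-1}[1], Z) = \Hom_\ct(X_{j-1}, Z[-1])$ that must be handled inductively. The remaining terms $\Hom_\ct(Q_j[k], Z) = \Hom_\ct(Q_j, Z[-k])$ for $k \in \{j-1, j\}$ are isomorphisms by the first part of the argument applied to $Q_j \in \cp$ and $Z[-k] \in \ct$.

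For essential surjectivity, given $Y' \in \ct'$ I lift its cluster-tilting filtration $Y'_0 \to \cdots \to Y'_{d-1} = Y'$ one layer at a time: the objects in $\cp'$ lift along the equivalence $F|_\cp$, and each connecting morphism lifts uniquely via the fully faithfulness just established, so the cone of the lifted morphism provides $Y_i$ with $FY_i \simeq Y'_i$. Combining the two conclusions yields that $F$ is a triangle equivalence. The main obstacle is the vanishing of $\Hom_\ct(X, W_{i-1}[1])$ in the first step --- this is what permits the d\'evissage to close in $d-1$ steps, and it relies essentially on the full $d$-cluster-tilting condition (through the bounded length of the filtration) rather than on rigidity of $\cp$ alone.
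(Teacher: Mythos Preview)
The paper does not supply its own proof of this proposition; it is quoted verbatim from \cite[Lemma 4.5]{KellerReiten08} and used as a black box (in the proof of Theorem~\ref{main theorem new}). So there is nothing in the present paper to compare your argument against.

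That said, your d\'evissage is correct and is essentially the argument one finds in Keller--Reiten. The two-stage five-lemma induction (first fix $X\in\cp$ and filter $Y$, then filter $X$ with the strengthened hypothesis ``for all $Z$'') is the standard route, and your identification of the vanishing of $\Hom_\ct(X,W_{i-1}[1])$ as the nontrivial point is accurate: it is exactly here that the bound $i\le d-1$ on the length of the filtration, coming from $\ct=\cp*\cp[1]*\cdots*\cp[d-1]$, is used. Note that the $d$-Calabi--Yau hypothesis plays no role in your proof beyond guaranteeing (via the paper's definitions) that $\cp$ and $\cp'$ are rigid and that the finite $*$-decomposition holds; this is as expected.
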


\subsection{Presilting and silting subcategories, t-structures and co-t-structures}\label{ss:silting-and-t-structure}
Let $\ct$ be a triangulated category. 

A full subcategory $\cp$ of $\ct$ is \emph{presilting} if $\Hom_{\ct}(\cp,\cp[i])=0$ for any $i>0$. It is \emph{silting} if in addition $\ct=\thick\cp$. An object $P$ of $\ct$ is \emph{presilting} if $\add P$ is a presilting subcategory and \emph{silting} if $\add P$ is a silting subcategory.

We denote by $\silt\ct$ (respectively, $\presilt\ct$) the class of silting (respectively, presilting) subcategories of $\ct$.
As usual we identify two (pre)silting subcategories $\MM$ and $\NN$ of $\ct$ when $\add\MM=\add\NN$.
The class $\silt\ct$ has a natural partial order: For $\MM,\NN\in\silt\ct$, we write
\[\MM\ge\NN\]
if $\Hom_{\ct}(\MM,\NN[>\hspace{-3pt}0])=0$.
This gives a partial order $\ge$ on $\silt\ct$, see \cite[Theorem 2.11]{AI}.

Triangulated categories with silting subcategories satisfy the following property.

\begin{lemma}\emph{(\cite[Proposition 2.4]{AI})}\label{vanishing for >>}
Let $\ct$ be a triangulated category with a silting subcategory $\cm$.
\begin{itemize}
\item[(a)] For any $X,Y\in\ct$, there exists $i\in\mathbb{Z}$ such that $\Hom_{\ct}(X,Y[\geq \hspace{-3pt}i])=0$.
\item[(b)] For any $X\in\ct$, there exist $i,j\in\mathbb{Z}$ such that $\Hom_{\ct}(\cm,X[\geq\hspace{-3pt}i])=0$ and  $\Hom_{\ct}(X,\cm[\geq\hspace{-3pt}j])=0$.
\end{itemize}
\end{lemma}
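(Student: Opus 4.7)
My plan is to prove part (b) first, and then deduce part (a) from it by a similar argument. The key idea throughout is that the collection of objects for which the vanishing property holds is a thick subcategory, so once it contains $\cm$, it must coincide with $\ct$ by the silting hypothesis $\ct=\thick\cm$.

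For part (b), fix the silting subcategory $\cm$ and consider
\[\cs:=\{X\in\ct\mid \exists\, i\in\mathbb{Z},\ \Hom_\ct(\cm,X[\geq i])=0\}.\]
By the presilting property of $\cm$, we have $\cm\subset\cs$ (take $i=1$). It is immediate that $\cs$ is closed under the shift functor and its inverse (just translate $i$). If $X\to Y\to Z\to X[1]$ is a triangle with $X,Z\in\cs$ witnessed by integers $i_X,i_Z$, then the long exact sequence $\Hom_\ct(\cm,X[n])\to\Hom_\ct(\cm,Y[n])\to\Hom_\ct(\cm,Z[n])$ shows that $Y\in\cs$ with witness $\max(i_X,i_Z)$. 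Closure under direct summands is clear, since a summand of a zero Hom group vanishes. Thus $\cs$ is a thick subcategory containing $\cm$, hence $\cs=\thick\cm=\ct$. This gives the first vanishing in (b). The second vanishing $\Hom_\ct(X,\cm[\geq j])=0$ follows by the dual argument applied to $\cs':=\{X\in\ct\mid \exists\, j,\ \Hom_\ct(X,\cm[\geq j])=0\}$, which is again a thick subcategory containing $\cm$.

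For part (a), fix $Y\in\ct$ and set
\[\cs_Y:=\{X\in\ct\mid \exists\, i\in\mathbb{Z},\ \Hom_\ct(X,Y[\geq i])=0\}.\]
By the same kind of verification (shift closure is built in, extension closure comes from the long exact sequence $\Hom_\ct(Z,Y[n])\to\Hom_\ct(Y',Y[n])\to\Hom_\ct(X',Y[n])$ for a triangle $X'\to Y'\to Z$, summand closure is trivial), $\cs_Y$ is a thick subcategory of $\ct$. The point is that $\cm\subset\cs_Y$: indeed, applying part (b) to $Y$ (in the role of $X$ there) yields an integer $i$ with $\Hom_\ct(\cm,Y[\geq i])=0$, i.e. $\cm\subset\cs_Y$. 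Hence $\cs_Y=\thick\cm=\ct$, which is precisely (a).

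There is no real obstacle here; the only thing to be careful about is the bookkeeping of the shift indices when checking extension closure (making sure that the chosen $i$ works uniformly for all sufficiently large shifts, not just one), and the fact that one must deduce (a) from (b) rather than prove them independently, since the thickness argument for (a) requires a starting vanishing which is provided by (b).
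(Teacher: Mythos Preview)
Your proof is correct and is the standard thick-subcategory argument for this result. Note, however, that the paper does not actually prove this lemma: it is stated with a citation to \cite[Proposition 2.4]{AI} and no proof is given in the present paper. Your argument is essentially the one found in that reference, so there is nothing to compare beyond observing that your approach matches the expected one.
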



A \emph{torsion pair} of $\ct$ is a pair $(\cx,\cy)$ of full subcategories of $\ct$ such that
\begin{itemize}
\item[(T1)] $\cx={}^\perp\cy$ and $\cy=\cx^\perp$;
\item[(T2)] $\ct=\cx*\cy$, namely, for each $M\in\ct$ there is a triangle $X_M\rightarrow
M\rightarrow Y_M\rightarrow X[1]$ in $\ct$ with $X_M\in\cx$ and $Y_M\in\cy$.
\end{itemize}
It is elementary that the condition (T1) can be replaced by the following condition:
\begin{itemize}
\item[(T1$'$)] $\Hom_{\ct}(\cx,\cy)=0$, $\cx=\add\cx$ and $\cy=\add\cy$.
\end{itemize}

A \emph{t-structure} on
$\ct$ (\cite{BeilinsonBernsteinDeligne82}) is a pair $(\ct^{\leq 0},\ct^{\geq 0})$ of full subcategories of $\ct$ such that $\ct^{\geq 1}\subset \ct^{\geq 0}$ and $(\ct^{\leq 0},\ct^{\geq 1})$ is a torsion pair. Here for an integer $n$ we denote $\ct^{\leq n}=\ct^{\leq 0}[-n]$ and $\ct^{\geq n}=\ct^{\geq 0}[-n]$. In this case, the triangle in the second condition above is unique up to a unique isomorphism, and the assignments $M\mapsto X_M$ and $M\mapsto Y_M$ define two functors $\sigma^{\leq 0}:\ct\rightarrow\ct^{\leq 0}$ and $\sigma^{\geq 1}:\ct\rightarrow\ct^{\geq 1}$, called the \emph{truncation functors}. For an integer $n$ the pair $(\ct^{\leq n},\ct^{\geq n})$ is also a t-structure and we denote by $\sigma^{\leq n}$ and $\sigma^{\geq n+1}$ the associated truncation functors. 
The \emph{heart} $\ch:=\ct^{\leq 0}\cap\ct^{\geq 0}$ is
always an abelian category.
The t-structure $(\ct^{\leq 0},\ct^{\geq 0})$ is said to be
\emph{bounded} if
\[
\bigcup_{n\in\mathbb{Z}} \ct^{\leq n}=\ct=\bigcup_{n\in\mathbb{Z}}\ct^{\geq n},
\] 
equivalently, if $\ct=\thick\ch$.

A \emph{co-t-structure} on
$\ct$  (\cite{Pauksztello08,Bondarko10}) is a pair $(\ct_{\geq 0},\ct_{\leq 0})$ of full subcategories of $\ct$ such that $\ct_{\geq 1}\subset \ct_{\geq 0}$ and $(\ct_{\geq 1},\ct_{\leq 0})$ is a torsion pair. Here for an integer $n$ we denote $\ct_{\geq n}=\ct_{\geq 0}[-n]$ and $\ct_{\leq n}=\ct_{\leq 0}[-n]$. The \emph{co-heart} $\cp:=\ct_{\geq 0}\cap\ct_{\leq 0}$ is a presilting subcategory of $\ct$, but it is usually not an abelian category.
The co-t-structure $(\ct_{\geq 0},\ct_{\leq 0})$ is said to be \emph{bounded} if
\[
\bigcup_{n\in\mathbb{Z}} \ct_{\geq n}=
\ct=\bigcup_{n\in\mathbb{Z}}\ct_{\leq n},
\]
equivalently, if $\ct=\thick\cp$. The co-heart of a bounded co-t-structure is a silting subcategory of $\ct$.

\subsection{Results on additive closures, co-t-structures and idempotent completeness}

Throughout this subsection, let $\ct$ be an arbitrary triangulated category.  We give useful criterions for $\ct$ to be idempotent complete, and also for subcategories of $\ct$ to be closed under direct summands.

We start with preparing some easy observations, which will be used later.

\begin{lemma}\label{belong to add}
If $X\in\add(\cs*\cs')$ satisfies $\Hom_{\ct}(\cs,X)=0$, then $X\in\add\cs'$.
\end{lemma}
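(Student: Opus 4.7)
The plan is to unpack the two hypotheses and then run a short diagram chase that splits $X$ off an object of $\cs'$.

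First I would unpack $X\in\add(\cs*\cs')$: by definition there exist $Y\in\cs*\cs'$ together with morphisms $i\colon X\to Y$ and $p\colon Y\to X$ such that $p\circ i=\mathrm{id}_X$. By definition of $\cs*\cs'$, the object $Y$ fits in a triangle
\[
S\xrightarrow{\alpha}Y\xrightarrow{\beta}S'\to S[1]
\]
with $S\in\cs$ and $S'\in\cs'$.

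Next I would use the vanishing $\Hom_{\ct}(\cs,X)=0$. Applied to $S\in\cs$, it forces the composite $p\circ\alpha\colon S\to X$ to be zero. Applying the contravariant functor $\Hom_{\ct}(-,X)$ to the triangle yields an exact sequence in which $\Hom_{\ct}(Y,X)\to\Hom_{\ct}(S,X)$ appears, with the latter term zero; hence $p$ lies in the image of $\Hom_{\ct}(S',X)\to\Hom_{\ct}(Y,X)$, i.e.\ there is $q\colon S'\to X$ with $p=q\circ\beta$.

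Finally, I would conclude by computing $q\circ(\beta\circ i)=p\circ i=\mathrm{id}_X$, which exhibits $X$ as a direct summand of $S'\in\cs'$, so $X\in\add\cs'$. There is no real obstacle here; the only thing to be careful about is that $\add(\cs*\cs')$ really does consist of direct summands of objects of $\cs*\cs'$ (rather than of finite sums of such), which holds because $\cs*\cs'$ is closed under finite direct sums (the direct sum of two defining triangles is again a triangle of the required form).
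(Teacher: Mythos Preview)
Your argument is correct, with one small caveat. The claim that $\cs*\cs'$ is closed under finite direct sums presupposes that $\cs$ and $\cs'$ are themselves closed under direct sums, which the lemma does not assume. The fix is painless: a finite direct sum of objects of $\cs*\cs'$ lies in $(\add\cs)*(\add\cs')$, and since $\Hom_{\ct}(\cs,X)=0$ forces $\Hom_{\ct}(\add\cs,X)=0$, your factorisation argument goes through verbatim with $S\in\add\cs$ and $S'\in\add\cs'$, giving $X\in\add(\add\cs')=\add\cs'$. (The paper's own proof is careful on exactly this point, taking $S\in\add\cs$ and $S'\in\add\cs'$.)

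Modulo that, your route is genuinely different from the paper's. The paper writes the direct-sum decomposition explicitly, starting from a triangle $S\xrightarrow{a}X\oplus Y\to S'\to S[1]$; the vanishing hypothesis forces $a=\binom{0}{b}$, and after completing $b$ to a triangle $S\xrightarrow{b}Y\to Z\to S[1]$ one compares cones to obtain an isomorphism $S'\simeq X\oplus Z$. Your approach instead pushes the retraction $p$ through the long exact sequence to produce $q\colon S'\to X$ with $q\beta i=\mathrm{id}_X$. Your version is shorter and avoids constructing the auxiliary triangle; the paper's version has the minor bonus of exhibiting the complementary summand of $X$ in $S'$ explicitly.
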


\begin{proof}
There exist $Y\in\ct$ and a triangle
\begin{equation}\label{triangle for a}
\xymatrix{S\ar[r]^(0.4){a}&X\oplus Y\ar[r]&S'\ar[r]&S[1]}
\end{equation}
with $S\in\add\cs$ and $S'\in\add\cs'$.
Since $\Hom_{\ct}(\cs,X)=0$, we can write $a={0\choose b}$ for $b:S\to Y$.
We extend $b$ to a triangle $S\xrightarrow{b}Y\stackrel{c}{\to} Z\to S[1]$. Then we have a triangle
\[\xymatrix@C=3em{S\ar[r]^(0.4){a={0\choose b}}&X\oplus Y\ar[r]^{{1_X\ 0\choose 0\ \ c}}&X\oplus Z\ar[r]& S[1].}\]
Comparing this with \eqref{triangle for a}, we have $S'\simeq X\oplus Z$. Thus $X\in\add\cs'$.
\end{proof}

Note that, if $\cs=\add\cs$ and $\cs'=\add\cs'$ hold, then $\cs*\cs'$ is closed under direct sums, but not necessarily under direct summands.
We have the following sufficient condition for the equality $\cs*\cs'=\add(\cs*\cs')$ to hold (cf. \cite[Proposition 2.1]{IyamaYoshino08} for the Krull--Schmidt case).

\begin{lemma}\label{closed under summands}
Let $\cs=\add\cs$ and $\cs'=\add\cs'$ be subcategories of $\ct$ satisfying $\Hom_{\ct}(\cs,\cs')=0$ and $\cs[1]\subset\cs'$. 
\begin{itemize}
\item[(a)] We have $\cs*\cs'=\add(\cs*\cs')$.
\item[(b)] If $\cs$ and $\cs'$ are idempotent complete, so is $\cs*\cs'$. 
\end{itemize}
\end{lemma}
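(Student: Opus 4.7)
The plan is to exploit the rigidity imposed by the hypotheses: $\Hom_\ct(\cs,\cs')=0$ together with $\cs[1]\subset\cs'$ in particular forces $\Hom_\ct(\cs,\cs[1])=0$, so morphisms in any triangle $S\to E\to S'\to S[1]$ with $S\in\cs$ and $S'\in\cs'$ satisfy useful uniqueness and lifting properties, making Lemma~\ref{belong to add} available whenever one can verify $\Hom_\ct(\cs,-)$-vanishing on a candidate object.

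For (a), take $E=X\oplus Y\in\cs*\cs'$ with a triangle $S\xrightarrow{f}E\xrightarrow{g}S'\xrightarrow{h}S[1]$, and consider the composition $f_X:=\pi_X f:S\to X$ with the split projection, completed to a triangle $S\xrightarrow{f_X}X\to C\to S[1]$. I would first verify $\Hom_\ct(\cs,C)=0$: applying $\Hom_\ct(\cs,-)$ and noting $\Hom_\ct(\cs,S[1])\subseteq\Hom_\ct(\cs,\cs')=0$, this reduces to surjectivity of $(f_X)_*=(\pi_X)_*\circ f_*$, which holds because $f_*:\Hom_\ct(\cs,S)\to\Hom_\ct(\cs,E)$ is surjective (its cokernel embeds in $\Hom_\ct(\cs,S')=0$) and $(\pi_X)_*$ is split surjective. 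To conclude $C\in\cs'$ via Lemma~\ref{belong to add}, it remains to show $C\in\add(\cs*\cs')$: the octahedral axiom applied to $S\xrightarrow{f}E\xrightarrow{\pi_X}X$, whose cones are $S'$ and $Y[1]$, yields a fourth triangle $S'\to C\to Y[1]\to S'[1]$, exhibiting $C$ as an extension of $Y[1]$ by $S'$; combined with $Y[1]\in\add((\cs*\cs')[1])\subseteq\add(\cs'*\cs'[1])$ (which uses $\cs[1]\subset\cs'$) and the symmetric construction performed for $Y$, one assembles the required $\cs*\cs'$-presentation of $C$, and hence $X\in\cs*\cs'$.

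The main obstacle for (a) is precisely this final splicing step, which demands careful bookkeeping with the octahedral axiom and a symmetric treatment of both summands $X,Y$. For (b), assuming $\cs,\cs'$ are idempotent complete, an idempotent $e:E\to E$ with $E\in\cs*\cs'$ lifts, by the same surjectivity arguments, to a morphism of triangles $(\alpha,e,\beta)$ with $\alpha\in\End_\ct(S)$ and $\beta\in\End_\ct(S')$. One then modifies $\alpha,\beta$---using the ambiguity $\alpha\mapsto\alpha+h[-1]\xi$ and $\beta\mapsto\beta+\sigma h$ in the lifts---to solve fixed-point equations rendering them honest idempotents; the morphism-of-triangles identity $\alpha h[-1]=h[-1]\beta[-1]$ keeps the iteration well-structured inside $\End_\ct$-spaces. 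Once $\alpha,\beta$ are idempotent, idempotent completeness of $\cs$ and $\cs'$ splits $S=S_1\oplus S_2$ and $S'=S'_1\oplus S'_2$, and the morphism of triangles decomposes the original triangle into two triangles $S_i\to E_i\to S'_i\to S_i[1]$, witnessing $E=E_1\oplus E_2$ in $\cs*\cs'$. The hardest step throughout is arranging the honest idempotence in (b), which I expect to require an iterative or ``geometric-series'' construction exploiting the specific form of the near-idempotent obstruction $\alpha^2-\alpha=h[-1]\rho$.
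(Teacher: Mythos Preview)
Your approach to (a) has a genuine gap at the ``splicing'' step. Your octahedron on $S\xrightarrow{f}E\xrightarrow{\pi_X}X$ produces a triangle $S'\to C\to Y[1]\to S'[1]$, so you only know $C\in\cs'*Y[1]$. Since $Y$ is merely in $\add(\cs*\cs')$, this places $C$ in something like $\add(\cs'*\cs'*\cs'[1])$, not in $\add(\cs*\cs')$; the hypotheses give no control over $\cs'[1]$, so Lemma~\ref{belong to add} cannot be invoked. The appeal to a ``symmetric construction performed for $Y$'' does not help: it yields an analogous triangle $S'\to C'\to X[1]\to S'[1]$, which is equally circular. The paper avoids this by applying the octahedron to a different composition, namely $S\xrightarrow{\binom{a}{a'}}X\oplus X'\xrightarrow{\left(\begin{smallmatrix}c&0\\0&1\end{smallmatrix}\right)}C\oplus X'$ (keeping the other summand $X'=Y$ in place). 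The resulting third triangle has the form $S\to S'\to Z\to S[1]$, which \emph{splits} because $\Hom_\ct(\cs,\cs')=0$, giving $Z\simeq S'\oplus S[1]\in\cs'$; the remaining column then exhibits $C\oplus X'\in\cs*\cs'$ directly, so $C\in\add(\cs*\cs')$ and Lemma~\ref{belong to add} finishes. The key idea you are missing is to compare the two maps $\binom{a}{a'}$ and $\binom{a}{0}$ from $S$ into the \emph{whole} of $X\oplus X'$, rather than projecting to a summand.

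For (b) your idempotent-lifting plan is in principle workable but far more laborious than needed, and you have not actually carried out the hard step of forcing $\alpha,\beta$ to be idempotent (this is a genuine theorem, not a routine calculation). The paper's argument is a one-liner: pass to the idempotent completion $\ct^\omega$ (which is triangulated by Balmer--Schlichting), observe that $\cs*_\ct\cs'=\cs*_{\ct^\omega}\cs'$ and that $\cs=\add_{\ct^\omega}\cs$, $\cs'=\add_{\ct^\omega}\cs'$ by hypothesis, and apply part (a) inside $\ct^\omega$ to conclude that $\cs*_\ct\cs'=\add_{\ct^\omega}(\cs*_{\ct^\omega}\cs')$ is idempotent complete.
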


\begin{proof}
Since $\cs$ and $\cs'$ are closed under direct sums, it follows easily from definition that $\cs*\cs'$ is also closed under direct sums.
It remains to show that $\cs*\cs'$ is closed under direct summands.
Assume that $X\oplus X'\in\cs*\cs'$, that is, there exists a triangle
\begin{equation}\label{middle X oplus X'}
\xymatrix{S\ar[r]^(.4){{a\choose a'}}&X\oplus X'\ar[r]^(.6){(b\ b')}&S'\ar[r]& S[1]}
\end{equation}
with $S\in\cs$ and $S'\in\cs'$. Now we extend $a:S\to X$ to a triangle
\begin{equation}\label{middle X}
\xymatrix{S\ar[r]^{a}&X\ar[r]^{c}& Y\ar[r]& S[1].}
\end{equation}
Since $\Hom_{\ct}(\cs,S')=0$, the map $\Hom_{\ct}(\cs,S)\xrightarrow{{a\choose a'}\cdot}\Hom_{\ct}(\cs,X\oplus X')$ is surjective by the triangle \eqref{middle X oplus X'}.
In particular, the map $\Hom_{\ct}(\cs,S)\xrightarrow{a\cdot}\Hom_{\ct}(\cs,X)$ is also surjective.
Thus we have $\Hom_{\ct}(\cs,Y)=0$ by the triangle \eqref{middle X} and our assumptions $\Hom_{\ct}(\cs,\cs')=0$ and $\cs[1]\subset\cs'$.

Using the octahedron axiom, we have the following commutative diagram:
\[\xymatrix{
&S\ar@{=}[r]\ar[d]_{{a\choose a'}}&S\ar[d]&\\
S\ar[r]^(.4){{a\choose 0}}\ar@{=}[d]&X\oplus X'\ar[r]^{{c\ \ 0\ \choose 0\ 1_{X'}}}\ar[d]_{(b\ b')}&Y\oplus X'\ar[r]\ar[d]&S[1]\ar@{=}[d]\\
S\ar[r]&S'\ar[r]\ar[d]&Z\ar[r]\ar[d]&S[1]\\
&S[1]\ar@{=}[r]&S[1]
}\]
Since $\Hom_{\ct}(S,S')=0$, the lower horizontal triangle splits, and we have $Z\simeq S'\oplus S[1]\in\cs'$.
Thus the right vertical triangle shows $Y\in\add(\cs*\cs')$.
Since $\Hom_{\ct}(\cs,Y)=0$ holds, we have $Y\in\add\cs'=\cs'$ by Lemma \ref{belong to add}. Therefore $X\in \cs*\cs'$. 

(b) Let $\ct^\omega$ be the idempotent completion of $\ct$. Then $\ct^\omega$ has a natural triangle structure such that $\ct$ becomes a triangulated subcategory of $\ct^\omega$ by \cite{BalmerSchlichting01}. 
Then $\cs*_\ct\cs'=\cs*_{\ct^\omega}\cs'$ since $\Hom_{\ct}(\cs',\cs[1])=\Hom_{\ct^\omega}(\cs',\cs[1])$. Since $\cs$ and $\cs'$ are idempotent complete, we have $\cs=\add_{\ct^\omega}\cs$ and $\cs'=\add_{\ct^\omega}\cs'$. So by Lemma~\ref{closed under summands}(a) 
\[
\cs*_\ct\cs'=\cs*_{\ct^\omega}\cs'=\add_{\ct^\omega}(\cs*_{\ct^\omega}\cs')
\]
is idempotent complete.
\end{proof}

We often use the following observation in this paper.

\begin{proposition}\label{additively closed}
Let $\ct$ be a triangulated category and $\cp=\add\cp$ a full subcategory of $\ct$ and $n\ge0$. Assume that
$\Hom_\ct(\cp,\cp[i])=0$ for any $i$ with $1\le i\le n$.
\begin{itemize}
\item[(a)] We have 
$\cp*\cp[1]*\cdots*\cp[n]=\add(\cp*\cp[1]*\cdots*\cp[n])$.
\item[(b)] If $\cp$ is idempotent complete, so is $\cp*\cp[1]*\cdots*\cp[n]$.
\end{itemize}
\end{proposition}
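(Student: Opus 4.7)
The plan is to prove both (a) and (b) simultaneously by induction on $n$, the key input being Lemma~\ref{closed under summands}. The base case $n=0$ is trivial since $\cp\ast\cp[1]\ast\cdots\ast\cp[n]$ reduces to $\cp$, which equals $\add\cp$ by hypothesis and is idempotent complete if and only if $\cp$ is.

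For the inductive step, I would split the iterated extension as $\cs\ast\cs'$ with
\[
\cs:=\cp\qquad\text{and}\qquad \cs':=\cp[1]\ast\cp[2]\ast\cdots\ast\cp[n],
\]
and then check that the hypotheses of Lemma~\ref{closed under summands} are satisfied. First, $\cs=\add\cs$ holds by assumption. Second, applying the induction hypothesis to the shifted subcategory $\cp[1]$ (which still satisfies $\Hom_\ct(\cp[1],\cp[1][i])=0$ for $1\le i\le n-1$, and which is idempotent complete whenever $\cp$ is, since $[1]$ is an auto-equivalence) yields $\cs'=\add\cs'$ for part (a) and the idempotent completeness of $\cs'$ for part (b). Third, the inclusion $\cs[1]=\cp[1]\subset\cs'$ is immediate by taking the second summand in the extension to be zero. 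Fourth, $\Hom_\ct(\cs,\cs')=0$ follows by a short induction: for $Y\in\cs'$ there is a triangle $Y_1\to Y\to Y'\to Y_1[1]$ with $Y_1\in\cp[1]$ and $Y'\in\cp[2]\ast\cdots\ast\cp[n]$, and applying $\Hom_\ct(\cp,-)$ gives the vanishing since $\Hom_\ct(\cp,\cp[i])=0$ for $1\le i\le n$.

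With those four conditions in place, Lemma~\ref{closed under summands}(a) delivers (a) and Lemma~\ref{closed under summands}(b) delivers (b), completing the induction. There is no real obstacle here --- the content of the argument already sits inside Lemma~\ref{closed under summands}; what this proposition adds is only the bookkeeping to iterate it. The one point requiring a moment of care is ensuring that the induction hypothesis is applied to the correct subcategory, namely to $\cp[1]$ with parameter $n-1$, so that the smaller iterated $\ast$-product is itself additively closed (resp.\ idempotent complete) before invoking the lemma; associativity of $\ast$ (via the octahedral axiom) guarantees that the bracketing $\cp\ast(\cp[1]\ast\cdots\ast\cp[n])$ matches the unbracketed expression in the statement.
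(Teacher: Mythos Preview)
Your proof is correct and follows essentially the same approach as the paper: induction on $n$ with base case $n=0$, splitting as $\cs=\cp$ and $\cs'=\cp[1]*\cdots*\cp[n]$, and invoking Lemma~\ref{closed under summands}. The paper's proof is terser and leaves the verification of $\Hom_\ct(\cs,\cs')=0$ and $\cs[1]\subset\cs'$ implicit, whereas you spell these out explicitly, but the argument is the same.
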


\begin{proof}
(a) For $n=0$ the assertion is the assumption $\cp=\add\cp$. Assume that it holds for $n-1$.
Then $\cs:=\cp$ and $\cs':=\cp[1]*\cp[2]*\cdots*\cp[n]$ satisfies
$\add\cs=\cs$ and $\add\cs'=\cs'$. In particular, the assumptions in Lemma \ref{closed under summands}(a) are satisfied, and hence
$\cs*\cs'=\cp*\cp[1]*\cdots*\cp[n]$ satisfies $\cs*\cs'=\add(\cs*\cs')$.

(b) Similarly this follows by induction on $n$ by using Lemma~\ref{closed under summands}(b).
\end{proof}

Now we show that any silting subcategory gives a co-t-structure on $\ct$. 
The following proposition is well-known, and was proved as \cite[Theorem 5.5]{MendozaSaenzSantiagoSouto13}, see also \cite[Proposition 2.22]{AI}, \cite[proof of Theorem 4.3.2]{Bondarko10} and \cite{KellerNicolas11}.

\begin{proposition}\label{from silting to co-t-structure}
Let $\ct$ be a triangulated category and $\cm$ a silting subcategory of $\ct$ with $\cm=\add\cm$.
\begin{itemize}
\item[(a)] Then $(\ct_{\ge0},\ct_{\le0})$ is a bounded co-t-structure on $\ct$, where
\begin{eqnarray*}
\ct_{\ge0}:=\bigcup_{n\ge0}\cm[-n]*\cdots*\cm[-1]*\cm\ \mbox{ and }\ 
\ct_{\le0}:=\bigcup_{n\ge0}\cm*\cm[1]*\cdots*\cm[n].
\end{eqnarray*}
\item[(b)] For any integers $m$ and $n$, we have
\[\ct_{\ge n}\cap\ct_{\le m}=\left\{\begin{array}{cc}
\cm[-m]*\cm[1-m]*\cdots*\cm[-n]&\mbox{ if }n\le m,\\
0&\mbox{ if }n>m.
\end{array}\right.\]
\end{itemize}
\end{proposition}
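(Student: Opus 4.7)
The plan is to verify the four defining axioms of a bounded co-t-structure for $(\ct_{\ge 0},\ct_{\le 0})$: summand closure of each half, the inclusion $\ct_{\ge 1}\subset\ct_{\ge 0}$, the Hom-vanishing $\Hom_\ct(\ct_{\ge 1},\ct_{\le 0})=0$, and the existence of approximation triangles; then extract (b) from the torsion-pair structure.

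Three axioms are routine. Each filtration stage $\cm[-n]*\cdots*\cm$ and $\cm*\cdots*\cm[n]$ is closed under direct summands by Proposition~\ref{additively closed}(a), so $\ct_{\ge 0}$ and $\ct_{\le 0}$ inherit this property. The inclusion $\ct_{\ge 1}\subset\ct_{\ge 0}$ follows by padding any filtration in $\cm[-n-1]*\cdots*\cm[-1]$ with a zero entry in $\cm$. For Hom-vanishing, let $X\in\ct_{\ge 1}$ have a filtration in shifts $\cm[a]$ with $a\le -1$ and $Y\in\ct_{\le 0}$ one in shifts $\cm[b]$ with $b\ge 0$; then $\Hom_\ct(\cm[a],\cm[b])=\Hom_\ct(\cm,\cm[b-a])=0$ by the presilting hypothesis, and iterating the long exact sequences produced by the defining triangles of the two filtrations yields $\Hom_\ct(X,Y)=0$.

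The heart of the proof is the existence of triangles together with boundedness, which I reduce to the key claim
\[
\ct=\bigcup_{i,j\ge 0}\cm[-i]*\cm[-i+1]*\cdots*\cm[j]=:\ca.
\]
The subcategory $\ca$ contains $\cm$, is stable under $[\pm 1]$ by re-indexing, and is summand-closed by Proposition~\ref{additively closed}(a). For extension closure, I argue by induction on length that any concatenation $\cm[a_1]*\cdots*\cm[a_k]$ lies in $\cm[\min a_i]*\cdots*\cm[\max a_i]$. The basic move is the swap $\cm[p]*\cm[q]\subset\cm[\min(p,q)]*\cdots*\cm[\max(p,q)]$: for $p\le q$ this is padding, and for $p>q$ the vanishing $\Hom_\ct(\cm[q],\cm[p+1])=\Hom_\ct(\cm,\cm[p+1-q])=0$ forces the defining triangle to split as a direct sum $\cm[q]\oplus\cm[p]$, which is then contained in $\cm[q]*\cdots*\cm[p]$ by padding. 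Combined with the collapse $\cm[a]*\cm[a]=\cm[a]$ (by the same splitting) and an octahedral argument to insert a factor $\cm[a]$ into a standard product whose range already contains $a$, induction folds any concatenation into the required standard form. Hence $\ca$ is thick and contains $\cm$, so $\ca=\thick\cm=\ct$. Given the claim, any $M\in\cm[-i]*\cdots*\cm[j]$ admits a filtration that splits at the boundary between $\cm[-1]$ and $\cm$ into a triangle $X\to M\to Y\to X[1]$ with $X\in\cm[-i]*\cdots*\cm[-1]\subset\ct_{\ge 1}$ and $Y\in\cm*\cdots*\cm[j]\subset\ct_{\le 0}$; boundedness $\ct=\bigcup_n\ct_{\ge n}=\bigcup_n\ct_{\le n}$ is immediate from the same claim.

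For part (b), if $n>m$ then $\ct_{\ge n}\subset\ct_{\ge m+1}$, so any $M\in\ct_{\ge n}\cap\ct_{\le m}$ has $1_M=0$ by the torsion-pair Hom-vanishing, forcing $M=0$. If $n\le m$, the inclusion $\supset$ is by padding; for $\subset$, pick $M$ in the intersection and the smallest $k\ge 0$ with $M\in\cm[-n-k]*\cdots*\cm[-n]$. The first triangle in this filtration exhibits $X\in\cm[-n-k]$ mapping to $M$; when $n+k>m$, $\cm[-n-k]\subset\ct_{\ge m+1}$ together with $M\in\ct_{\le m}$ gives $\Hom_\ct(X,M)=0$, so the map $X\to M$ is zero, the triangle splits as $M'\simeq M\oplus X[1]$ with $M'\in\cm[-n-k+1]*\cdots*\cm[-n]$, and summand closure of this subcategory (Proposition~\ref{additively closed}(a)) places $M$ in $\cm[-n-k+1]*\cdots*\cm[-n]$, contradicting minimality of $k$. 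Hence $k\le m-n$ and $M\in\cm[-m]*\cdots*\cm[-n]$ after padding. The main obstacle is the extension-closure step in paragraph three, where presilting is used to systematically reorder and collapse $*$-factors; the rest is essentially bookkeeping.
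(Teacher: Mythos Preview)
Your proof is correct and follows essentially the same strategy as the paper: summand closure via Proposition~\ref{additively closed}, Hom-vanishing from presilting, and the key identity $\ct=\bigcup_{n}\cm[-n]*\cdots*\cm[n]$ yielding both the approximation triangles and boundedness. The only difference is that the paper outsources this last identity to \cite[Lemma~2.15(b)]{AI} and dispatches part~(b) in one line via Lemma~\ref{belong to add}, whereas you prove extension closure of $\ca$ by hand (the swap--collapse argument) and run the minimality induction for~(b); these are exactly the arguments behind the cited results, so nothing is genuinely different.
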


\begin{proof}
(a) For the convenience of the reader, we give a simple direct proof.
By induction we obtain $\Hom_{\ct}(\ct_{\ge1},\ct_{\le0})=0$.
Since $\Hom_{\ct}(\cm,\cm[>\hspace{-3pt}0])=0$, we have $\ct_{\ge1}=\add\ct_{\ge1}$ and $\ct_{\le0}=\add\ct_{\le0}$ by Proposition \ref{additively closed}. Thus the condition (T1$'$) holds.
On the other hand, there is the following equality
\[\ct=\bigcup_{n\ge0}\add(\cm[-n]*\cm[1-n]*\cdots*\cm[n-1]*\cm[n])\]
by \cite[Lemma 2.15(b)]{AI}. Applying Proposition \ref{additively closed} again, we have the condition (T2):
\[\ct=\bigcup_{n\ge0}\cm[-n]*\cm[1-n]*\cdots*\cm[n-1]*\cm[n]=
\ct_{\ge0}*\ct_{<0}.\]

(b) This can be shown easily by using Lemma \ref{belong to add}.
\end{proof}

As a consequence of Propositions~\ref{from silting to co-t-structure} and~\ref{additively closed}, we have

\begin{theorem}\label{c:idempotent-completeness-of-co-heart-implies-idempotent-completeness}
If a triangulated category has an idempotent complete silting subcategory (respectively, $d$-cluster-tilting subcategory for some $d\ge1$), then it is idempotent complete.
\end{theorem}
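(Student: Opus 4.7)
The plan is to deduce both assertions directly from the structural result Proposition~\ref{from silting to co-t-structure} together with Proposition~\ref{additively closed}(b); no new technical machinery should be needed.

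I would begin with the silting case. Let $\cm=\add\cm$ be an idempotent complete silting subcategory of $\ct$. By Proposition~\ref{from silting to co-t-structure}(a) (or the filtration used in its proof) we have
\[\ct=\bigcup_{n\ge0}\bigl(\cm[-n]*\cm[1-n]*\cdots*\cm[n-1]*\cm[n]\bigr).\]
Fix $n\ge 0$ and set $\cp:=\cm[-n]$. The shift $[-n]$ is a triangle autoequivalence, so $\cp$ inherits idempotent completeness from $\cm$, and the silting condition $\Hom_\ct(\cm,\cm[>\hspace{-3pt}0])=0$ gives $\Hom_\ct(\cp,\cp[i])=0$ for $1\le i\le 2n$. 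Thus the hypotheses of Proposition~\ref{additively closed}(b) are satisfied, and the full subcategory
\[\cm[-n]*\cm[1-n]*\cdots*\cm[n]=\cp*\cp[1]*\cdots*\cp[2n]\]
is itself idempotent complete.

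Next I would argue that $\ct$ is idempotent complete. Given an idempotent $e:X\to X$ in $\ct$, by the display above there exists $n\ge0$ with $X\in\cm[-n]*\cdots*\cm[n]$. Since this subcategory is full in $\ct$ and idempotent complete, $e$ admits a kernel inside it, and this kernel is automatically a kernel of $e$ in $\ct$. Hence $\ct$ is idempotent complete.

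For the $d$-cluster-tilting case the argument is even shorter: if $\cp$ is an idempotent complete $d$-cluster-tilting subcategory, then the equivalent characterisation recalled in Section~\ref{section: Mutation pair} gives $\ct=\cp*\cp[1]*\cdots*\cp[d-1]$, while $d$-rigidity yields $\Hom_\ct(\cp,\cp[i])=0$ for $1\le i\le d-1$. Proposition~\ref{additively closed}(b) with $n=d-1$ then shows that $\ct$ itself, being equal to $\cp*\cp[1]*\cdots*\cp[d-1]$, is idempotent complete. No real obstacle is expected; the only point that deserves a line of justification is that the splitting of an idempotent inside a full subcategory of $\ct$ is also a splitting in $\ct$, which is immediate from the definition.
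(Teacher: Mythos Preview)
Your proof is correct and follows exactly the approach the paper intends: the paper simply states that the theorem is a consequence of Proposition~\ref{from silting to co-t-structure} and Proposition~\ref{additively closed}, and you have supplied the (entirely straightforward) details of how these combine, including the observation that $\ct=\cp*\cp[1]*\cdots*\cp[d-1]$ handles the cluster-tilting case in one step.
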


As a special case of Theorem~\ref{c:idempotent-completeness-of-co-heart-implies-idempotent-completeness}, we recover the well-known result that the bounded homotopy category of finitely generated projective modules over a ring is idempotent complete.
The `silting' part of Theorem~\ref{c:idempotent-completeness-of-co-heart-implies-idempotent-completeness} is \cite[Lemma 5.2.1]{Bondarko10}. It can be reformulated as: If $\ct$ has a bounded co-t-structure with idempotent complete co-heart, then $\ct$ is idempotent complete. It can be considered as `dual' to the fact that if $\ct$ has a bounded t-structure, then $\ct$ is idempotent complete (see \cite[Theorem]{ChenLe07}).

\subsection{Derived categories of dg algebras}\label{ss:derived-category}
We follow \cite{Keller94,Keller06d}.

Let $k$ be a field and $A$ be a dg ($k$-)algebra, that is, a graded algebra endowed with a compatible structure of a complex. A (right) dg $A$-module is a (right) graded $A$-module endowed with a compatible structure of a complex.
Let $\cd(A)$ denote the derived category of dg $A$-modules. This is a triangulated category whose shift functor is the shift of complexes.
Let $\per(A)=\thick(A_A)$ and let $\cd_{\rm fd}(A)$ denote the full subcategory of $\cd(A)$ consisting of dg $A$-modules whose total cohomology is finite-dimensional over $k$. These are two triangulated subcategories of $\cd(A)$.

Let $\ct$ be an algebraic triangulated category (over $k$), that is, $\ct$ is triangle equivalent to the stable category of a Frobenius category. Assume that $\ct$ is idempotent complete and $M$ is an object of $\ct$ such that $\ct=\thick(M)$.
Then by \cite[Theorem 3.8 b)]{Keller06d}, there is a dg algebra $A$ together with a triangle equivalence $\ct\to\per(A)$ which takes $M$ to $A_A$.
We briefly describe the construction of $A$ and refer to the proof of \cite[Theorem 4.3]{Keller94} for more details. Let $\ce$ be a Frobenius category such that the stable category of $\ce$ is triangle equivalent to $\ct$.
Let $\proj\ce$ denote the full subcategory of projective objects of $\ce$.
Then $\ck_{\rm ac}(\proj\ce)$, the homotopy category of acyclic complexes on $\proj\ce$, is triangle equivalent to $\ct$.
Let $\widetilde{M}$ be a preimage of $M$ under this equivalence and let $A$ be the dg endomorphism algebra of $\widetilde{M}$.
Then there is a natural triangle functor $\ck_{\rm ac}(\proj\ce)\to\per(A)$ which turns out to be a triangle equivalence and takes  $\widetilde{M}$ to $A_A$.
Composing this equivalence with the equivalence $\ck_{\rm ac}(\proj\ce)\rightarrow \ct$, we obtain a triangle equivalence $\ct\to\per(A)$  which takes $M$ to $A_A$.

\section{Silting reduction as subfactor category}\label{s:subfactor-category}

A silting reduction of a triangulated category $\ct$ was introduced in \cite{AI} as the triangle quotient $\ct/\thick\cp$ of $\ct$ by the thick subcategory $\thick\cp$ generated by a presilting subcategory $\cp$ of $\ct$.
In this section we show that under mild conditions, the silting reduction of $\ct$ can be realized as a certain subfactor category of $\ct$.
Moreover we show that there is a bijection between silting subcategories of $\ct$ containing $\cp$ and silting subcategories of the silting reduction  $\ct/\thick\cp$.
We also discuss various applications of this result.

\subsection{The additive equivalence}\label{ss:add-equiv}

Let $\ct$ be a triangulated category.
We fix a presilting subcategory $\cp$ of $\ct$. 
Let
\[\SS:=\thick_{\ct}\cp\ \mbox{ and }\ \cu:=\ct/\SS.\]
We call $\cu$ the \emph{silting reduction} of $\ct$ with respect to $\cp$ (see \cite{AI}). We refer to \cite{Neeman01b} for the standard description of morphisms in triangle quotient categories, which are heavily used in this section and Section~\ref{s:silting-red-vs-cy-red}. 
 In the rest, we assume $\cp=\add\cp$ for simplicity. 
For an integer $\ell$, there is a bounded co-t-structure $(\SS_{\ge\ell},\SS_{\le\ell})$ on $\SS$ by Proposition~\ref{from silting to co-t-structure}, where
\begin{eqnarray*}
\SS_{\ge\ell}=\SS_{>\ell-1}&:=&\bigcup_{i\ge0}\cp[-\ell-i]*\cdots*\cp[-\ell-1]*\cp[-\ell],\\
\SS_{\le\ell}=\SS_{<\ell+1}&:=&\bigcup_{i\ge0}\cp[-\ell]*\cp[-\ell+1]*\cdots*\cp[-\ell+i].
\end{eqnarray*} 
We introduce a full subcategory $\cz$ of $\ct$ by
\[\cz:=({}^{\perp_\ct}\SS_{<0})\cap(\SS_{>0}{}^{\perp_\ct})
=({}^{\perp_\ct}\cp[>\hspace{-3pt}0])\cap(\cp[<\hspace{-3pt}0]{}^{\perp_\ct}).\]
Since $\cp$ is presilting, we have $\cp\subset \cz$.

Now we consider the following mild technical conditions:
\begin{itemize}
\item[(P1)] $\cp$ is covariantly finite in ${}^{\perp_\ct}\SS_{<0}$ and contravariantly finite in $\SS_{>0}{}^{\perp_\ct}$.
\item[(P2)] For any $X\in\ct$, we have $\Hom_{\ct}(X,\cp[\ell])=0=\Hom_\ct(\cp,X[\ell])$ for $\ell\gg0$.
\end{itemize}
For example, (P1) is satisfied when $\ct$ is Hom-finite over a field and $\cp=\add(P)$ for a presilting object $P$; by Lemma \ref{vanishing for >>}, (P2) is satisfied when $\ct$ admits a silting subcategory which contains $\cp$.

The following result shows that we can realise the triangle quotient $\cu=\ct/\cs$ as a subfactor category of $\ct$. Let $\rho\colon\ct\to\cu$ be the canonical projection functor.

\begin{theorem}\label{equivalence}
Under the conditions (P1) and (P2), the composition $\cz\subset\ct\xrightarrow{\rho}\cu$ of natural functors induces an equivalence of additive categories:
\[\bar{\rho}\colon\frac{\cz}{[\cp]}\stackrel{\simeq}{\longrightarrow}\cu.\]
\end{theorem}


The rest of this subsection is devoted to the proof of Theorem~\ref{equivalence}. 
Since $\rho(\cp)=0$, the composition $\cz\subset\ct\xrightarrow{\rho}\cu$ induces a functor $\bar{\rho}\colon\frac{\cz}{[\cp]}\to\cu$.
To prove that this is an equivalence, we start with the following useful observation, which generalises Proposition~\ref{from silting to co-t-structure}.

\begin{proposition}\label{torsion pairs}
The following conditions are equivalent.
\begin{itemize}
\item[(a)] The conditions (P1) and (P2) are satisfied.
\item[(b)] The two pairs $({}^{\perp_\ct}\SS_{<0},\SS_{\leq 0})$ and $(\SS_{\geq 0},\SS_{>0}{}^{\perp_\ct})$ are co-t-structures on $\ct$.
\end{itemize}
In this case, the co-hearts of these co-t-structures are $\cp$.
\end{proposition}

\begin{proof} 
First, we prove $\cp=({}^{\perp_\ct}\SS_{<0})\cap\SS_{\leq 0}=\SS_{\geq 0}\cap(\SS_{>0}{}^{\perp_\ct})$. We only prove the first equality since the second one is dual. It suffices to show that any $X\in({}^{\perp_\ct}\SS_{<0})\cap\SS_{\leq 0}$ belongs to $\cp$. Since $\cs_{\le0}=\cp*\cs_{<0}$, we get $X\in\add\cp=\cp$ by the dual of Lemma~\ref{belong to add}.

(a)$\Rightarrow$(b)
We only prove that $({}^{\perp_\ct}\SS_{<0},\SS_{\leq 0})$ is a co-t-structure on $\ct$ since the other assertion can be shown similarly.
This is equivalent to showing that $({}^{\perp_\ct}\SS_{<0},\SS_{<0})$ is a torsion pair.
Since ${}^{\perp_\ct}\SS_{<0}=\add{}^{\perp_\ct}\SS_{<0}$ holds and $\SS_{\leq 0}=\add\SS_{\leq 0}$ holds by Proposition \ref{additively closed},
it is enough to show that any object $X\in\ct$ belongs to $({}^{\perp_\ct}\SS_{<0})*\SS_{<0}$.
By our assumption (P2), there exists some integer $\ell$ such that $X\in{}^{\perp_\ct}\SS_{<-\ell}$.
If $\ell\le0$, then ${}^{\perp_\ct}\SS_{<-\ell}\subset {}^{\perp_\ct}\SS_{<0}$ and the assertion follows.
Thus we assume $\ell>0$ and induct on $\ell$.
By our assumption (P1), there exists a triangle
\[
\xymatrix{
Y\ar[r] & X\ar[r]^{f} & P[\ell]\ar[r]& Y[1]
}
\]
with a left $\cp[\ell]$-approximation $f$ of $X$.
Applying $\Hom_{\ct}(-,\SS_{<-\ell})$ and $\Hom_{\ct}(-,\cp[\ell])$, we have $Y\in{}^{\perp_\ct}\SS_{\le-\ell}$.
By the induction hypothesis, we have $Y\in({}^{\perp_\ct}\SS_{<0})*\SS_{<0}$.
Thus $X\in Y*P[\ell]\in({}^{\perp_\ct}\SS_{<0})*(\SS_{<0}*\cp[\ell])=({}^{\perp_\ct}\SS_{<0})*\SS_{<0}$ holds since $\cs_{<0}$ is extension closed.

(b)$\Rightarrow$(a) For any $X\in{}^{\perp_\ct}\SS_{<0}$, take a triangle $Y\to X\xrightarrow{a} X_{\le0}\to Y[1]$ with $Y\in{}^{\perp_\ct}\SS_{\le0}$ and $X_{\le0}\in\cs_{\le0}$. Then $X_{\le0}$ belongs to $({}^{\perp_\ct}\SS_{<0})*({}^{\perp_\ct}\SS_{<0})={}^{\perp_\ct}\SS_{<0}$ and hence to $({}^{\perp_\ct}\SS_{<0})\cap\SS_{\le0}=\cp$.
Since $\Hom_{\ct}(Y,\cp)=0$, it follows that $a$ is a left $\cp$-approximation. Thus $\cp$ is covariantly finite in ${}^{\perp_\ct}\SS_{<0}$. Dually, $\cp$ is contravariantly finite in $\SS_{>0}{}^{\perp_\ct}$.

By the definition of  ${}^{\perp_\ct}\SS_{<0}$ we have $\Hom_\ct({}^{\perp_\ct}\SS_{<0},\cp[>\hspace{-3pt}0])=0$.
For any $X$ in $\cs$, $\Hom_\ct(X,\cp[\gg\hspace{-3pt}0])=0$ holds.
Since any $X$ in $\ct$ belongs to $({}^{\perp_\ct}\SS_{<0})*\SS_{<0}$, we have $\Hom_\ct(X,\cp[\gg\hspace{-3pt}0]) = 0$.
Dually, we have $\Hom_\ct(\cp,X[\gg\hspace{-3pt}0])=0$. Thus (P2) holds.
\end{proof}

Next we show that our functor in Theorem \ref{equivalence} is dense.

\begin{lemma}
For any $X\in\ct$, there exists $Y\in\cz$ satisfying $X\simeq Y$ in $\cu$.
As a consequence, the  functor $\bar{\rho}:\frac{\cz}{[\cp]}\to\cu$ in Theorem~\ref{equivalence} is dense.
\end{lemma}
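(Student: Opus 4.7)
The plan is to construct, for any $X \in \ct$, an object $Y \in \cz$ together with an isomorphism $X \simeq Y$ in $\cu$, by applying in succession the torsion-pair truncations coming from the two co-t-structures furnished by Proposition~\ref{torsion pairs}. First I would apply the torsion pair associated with $({}^{\perp_\ct}\cp[>0], \SS_{\le 0})$, whose torsion class is $({}^{\perp_\ct}\cp[>0])[-1] = {}^{\perp_\ct}\cp[\ge 0]$. This produces a triangle
\[X_1 \longrightarrow X \longrightarrow X_0 \longrightarrow X_1[1]\]
with $X_1 \in {}^{\perp_\ct}\cp[\ge 0]$ and $X_0 \in \SS_{\le 0} \subseteq \SS$. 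Since $X_0 \in \SS$, the map $X_1 \to X$ becomes an isomorphism in $\cu$.

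Next I would apply the torsion pair $(\SS_{\ge 1}, \cp[<0]^{\perp_\ct})$ associated with the second co-t-structure $(\SS_{\ge 0}, \cp[<0]^{\perp_\ct})$ to the object $X_1$, obtaining a triangle
\[Y_1 \longrightarrow X_1 \longrightarrow Y \longrightarrow Y_1[1]\]
with $Y_1 \in \SS_{\ge 1} \subseteq \SS$ and $Y \in \cp[<0]^{\perp_\ct}$. Again $Y_1$ lies in $\SS$, so $X \simeq X_1 \simeq Y$ in $\cu$.

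The main obstacle is to confirm that the second truncation has not destroyed the first vanishing condition, i.e., that $Y$ still belongs to ${}^{\perp_\ct}\cp[>0]$. For this I would apply $\Hom_\ct(-, \cp[i])$ with $i > 0$ to the last triangle, yielding an exact sequence
\[\Hom_\ct(Y_1, \cp[i-1]) \longrightarrow \Hom_\ct(Y, \cp[i]) \longrightarrow \Hom_\ct(X_1, \cp[i]).\]
The right-hand term vanishes because $X_1 \in {}^{\perp_\ct}\cp[\ge 0]$. For the left-hand term, note that $Y_1 \in \SS_{\ge 1}$ is an iterated extension of objects of the form $\cp[-j]$ with $j \ge 1$, and for such $j$ and $i \ge 1$ we have $\Hom_\ct(\cp[-j], \cp[i-1]) = \Hom_\ct(\cp, \cp[i-1+j]) = 0$ by the presilting hypothesis, since $i - 1 + j \ge 1$. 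Hence $Y \in \cz$, and the density of $\bar\rho$ follows immediately, using that $\cp \subseteq \SS$ ensures the composition $\cz \hookrightarrow \ct \xrightarrow{\rho} \cu$ descends to $\cz/[\cp]$.
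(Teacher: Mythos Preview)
Your proof is correct and follows essentially the same approach as the paper's: apply the two co-t-structure truncations from Proposition~\ref{torsion pairs} in succession, then verify that the second truncation preserves the orthogonality obtained from the first. The only cosmetic difference is that you use the shifted torsion pair $({}^{\perp_\ct}\cp[\ge 0],\SS_{\le 0})$ in the first step, whereas the paper uses $({}^{\perp_\ct}\SS_{<0},\SS_{<0})$; this has no effect on the argument since only $\Hom_\ct(X_1,\cp[>\!0])=0$ is needed afterwards.
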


\begin{proof} Let $X\in\cu$. 
By Proposition \ref{torsion pairs}, we have a triangle
\[
\xymatrix{
X'\ar[r] & X\ar[r] & S\ar[r] & X'[1] & (X'\in{}^{\perp_\ct}\SS_{<0},\ S\in\SS_{<0}).
}
\]
Then we have $X\simeq X'$ in $\cu$.
Again by Proposition \ref{torsion pairs}, we have a triangle
\[
\xymatrix{
S'\ar[r] & X'\ar[r] & Y\ar[r] & S'[1] & (S'\in\SS_{>0},\ Y\in\SS_{>0}{}^{\perp_\ct}).
}
\]
Then we have $X\simeq X'\simeq Y$ in $\cu$.
Applying $\Hom_{\ct}(-,\SS_{<0})$, we see that
$\Hom_{\ct}(Y,\SS_{<0})\simeq\Hom_{\ct}(X',\SS_{<0})$ vanishes.
Thus $Y$ belongs to $({}^{\perp_\ct}\SS_{<0})\cap(\SS_{>0}{}^{\perp_\ct})=\cz$, and we have an isomorphism $X\simeq Y$ in $\cu$.
\end{proof}

Finally we show that our functor is fully faithful.

\begin{lemma}\label{fully faithful}
The functor $\rho\colon\ct\to\cu$ induces the following bijective maps for any $M\in{}^{\perp_\ct}\SS_{<0}$ and $N\in\SS_{>0}{}^{\perp_\ct}$
\begin{align*}
\Hom_{\frac{\ct}{[\cp]}}(M,N)&\longrightarrow\Hom_{\cu}(M,N),\\
\Hom_{\ct}(M,N[\ell])&\longrightarrow\Hom_{\cu}(M,N[\ell])\qquad(\ell>0).
\end{align*}
As a consequence, the functor $\bar{\rho}:\frac{\cz}{[\cp]}\to\cu$ in Theorem~\ref{equivalence} is fully faithful.
\end{lemma}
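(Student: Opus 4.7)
The plan is to reduce both bijections to three vanishing results coming from Proposition~\ref{torsion pairs} together with the hypotheses on $M$ and $N$:
\begin{itemize}
\item[(V1)] $\Hom_\ct(\SS_{\ge 1}, N[\ell])=0$ for $\ell\ge 0$, since $\SS_{\ge 1+\ell}\subset\SS_{\ge 1}$ and $N\in\SS_{>0}{}^{\perp_\ct}$;
\item[(V2)] $\Hom_\ct(\cp, N[\ell])=0$ for $\ell\ge 1$, as $\cp[-\ell]\in\SS_{\ge 1}$;
\item[(V3)] $\Hom_\ct(M, \SS_{<0})=0$, which is the hypothesis on $M$.
\end{itemize}

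For injectivity, suppose $h\colon M\to N[\ell]$ vanishes in $\cu$, so $h=b\circ a$ with $a\colon M\to S'$, $b\colon S'\to N[\ell]$, and $S'\in\SS$. Using the torsion pair $(\SS_{\ge 1},\SS_{\le 0})$ on $\SS$, decompose $S'$ as $S'_1\to S'\xrightarrow{\sigma}S'_0$ with $S'_0\in\SS_{\le 0}$. By (V1), $\Hom_\ct(S'_1,N[\ell])=0$, so $b$ factors as $b=b_0\sigma$ for some $b_0\colon S'_0\to N[\ell]$. Next decompose $S'_0$ via the torsion pair $(\cp,\SS_{<0})$ as $P\xrightarrow{\rho}S'_0\xrightarrow{\tau}S'_{<0}$. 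By (V3), $\tau\sigma a=0$, so $\sigma a=\rho a'$ for some $a'\colon M\to P$. For $\ell\ge 1$, (V2) further gives $b_0=b_1\tau$, whence $h=b_1\tau\sigma a=0$; for $\ell=0$, $h=(b_0\rho)a'$ factors through $P\in\cp$, so $h$ vanishes in $\frac{\ct}{[\cp]}$.

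For surjectivity, represent $\alpha\in\Hom_\cu(M,N[\ell])$ by a roof $M\xleftarrow{s}M'\xrightarrow{f}N[\ell]$ whose cone $S$ lies in $\SS$, and dually refine $s$ by two successive octahedra. The torsion pair $(\SS_{\ge 1},\SS_{\le 0})$ on $S$ produces a new roof $(s^{(1)}\colon M^{(1)}\to M, f^{(1)})$ with cone of $s^{(1)}$ in $\SS_{\le 0}$; the extension $f^{(1)}$ of $f$ exists because (V1) yields $\Hom_\ct(\SS_{\ge 1}[-1],N[\ell])=\Hom_\ct(\SS_{\ge 1},N[\ell+1])=0$. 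A second refinement via $(\cp,\SS_{<0})$ then gives $(s^{(2)},f^{(2)})$ with cone in $\SS_{<0}$, using (V2) to extend. The long exact sequence for $M^{(2)}\to M\to\operatorname{cone}(s^{(2)})$ now places the obstruction to lifting $f^{(2)}$ in $\Hom_\ct(\operatorname{cone}(s^{(2)}),N[\ell+1])$, and an analysis dual to the injectivity chase, using (V2) and (V3), shows this obstruction vanishes for $\ell\ge 1$ and factors through $\cp$ for $\ell=0$, yielding the desired preimage $\beta\in\Hom_\ct(M,N[\ell])$ or its class in $\Hom_{\frac{\ct}{[\cp]}}(M,N)$. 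Well-definedness of $\frac{\ct}{[\cp]}\to\cu$ at $\ell=0$ follows since $\cp\subset\SS$.

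The main obstacle will be the surjectivity step: the two successive octahedra must be assembled carefully so that the partial extensions $f^{(1)},f^{(2)}$ are compatible, and the remaining obstruction in $\Hom_\ct(\operatorname{cone}(s^{(2)}),N[\ell+1])$ must be analyzed dually to the injectivity factorization. Condition (P2) will be used to guarantee that the required refinements terminate within a bounded range of shifts of $\cp$.
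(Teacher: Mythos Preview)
Your injectivity argument is correct and essentially matches the paper's proof: factor through $\SS$, truncate in the co-t-structure, and use the hypotheses on $M$ and $N$. (The paper truncates once at $(\SS_{>-\ell},\SS_{\le-\ell})$ rather than in two steps, but this is cosmetic.)

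Your surjectivity argument has a genuine gap. After the two refinements you reach a roof $(s^{(2)},f^{(2)})$ with $\operatorname{cone}(s^{(2)})=S_{<0}\in\SS_{<0}$, and you want $\beta\colon M\to N[\ell]$ with $\beta s^{(2)}=f^{(2)}$. The obstruction lies in $\Hom_\ct(S_{<0}[-1],N[\ell])$, but this space need not vanish: $S_{<0}[-1]\in\SS_{\le 0}$ contains summands coming from $\cp[\ell]$, and $\Hom_\ct(\cp[\ell],N[\ell])\cong\Hom_\ct(\cp,N)$ is in general nonzero. So neither (V2) nor (V3) kills this obstruction, and no ``dual chase'' will. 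The correct final step is different: since $M\in{}^{\perp_\ct}\SS_{<0}$, the map $M\to S_{<0}$ is zero by (V3), so the triangle $M^{(2)}\xrightarrow{s^{(2)}}M\to S_{<0}$ splits; a section $\iota$ of $s^{(2)}$ has cone in $\SS$, whence $(s^{(2)},f^{(2)})$ and $(\mathrm{id}_M,f^{(2)}\iota)$ represent the same morphism in $\cu$, and $\beta:=f^{(2)}\iota$ is the desired preimage (for all $\ell\ge0$ at once; there is no separate ``factors through $\cp$'' case).

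By contrast, the paper's proof uses a \emph{right} roof $M\xrightarrow{f}X\xleftarrow{s}N[\ell]$. One single co-t-truncation of $\operatorname{cone}(s)$ into $\SS_{\ge0}*\SS_{<0}$ and one octahedron replace $X$ by $X'$ whose cone over $N[\ell]$ lies in $\SS_{<0}$; then (V3) forces $M\to X'\to S_{<0}$ to vanish, so $M\to X'$ lifts along $N[\ell]\to X'$, and the discrepancy factors through $\SS_{\ge0}\subset\SS$. This is shorter and sidesteps the issue above entirely.

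Finally, your remark that (P2) is needed ``to guarantee the refinements terminate'' is misplaced: (P2) is used only in establishing Proposition~\ref{torsion pairs}, and once the co-t-structures on $\ct$ exist the present lemma uses nothing further.
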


\begin{proof}
We first show the surjectivity. 

Let $\ell\ge0$. Any morphism in $\Hom_{\cu}(M,N[\ell])$ has a representative of the form $M\xrightarrow{f}X\xleftarrow{s}N[\ell]$, where $f\in\Hom_{\ct}(M,X)$ and $s\in\Hom_{\ct}(N[\ell],X)$, such that the cone of $s$ is in $\SS$.
Take a triangle
\[
\xymatrix{
N[\ell]\ar[r]^{s} & X\ar[r] & S\ar[r]^(0.4){a} & N[\ell+1] & (S\in\SS).
}
\]
By Proposition~\ref{from silting to co-t-structure}, we can take a triangle
\[
\xymatrix{
S_{\ge0}\ar[r]^{b} &S\ar[r] &S_{<0}\ar[r] & S_{\ge0}[1] & (S_{\ge0}\in\SS_{\ge0},\ S_{<0}\in\SS_{<0}).
}
\]
Since $ab=0$ by $S_{\ge0}\in\SS_{\ge0}$ and $N[\ell+1]\in\SS_{>-\ell-1}{}^{\perp_\ct}$, we have the following commutative diagram by the octahedral axiom.
\[\xymatrix{
&S_{\ge0}\ar@{=}[r]\ar[d]&S_{\ge0}\ar[d]^b\\
N[\ell]\ar[r]^s\ar@{=}[d]&X\ar[r]\ar[d]^c&S\ar[r]^a\ar[d]&N[\ell+1]\ar@{=}[d]\\
N[\ell]\ar[r]^{cs}&X'\ar[r]^d\ar[d]&S_{<0}\ar[r]\ar[d]&N[\ell+1]\\
&S_{\ge0}[1]\ar@{=}[r]&S_{\ge0}[1]
}\]
Then we have $dcf=0$ by $M\in{}^{\perp_\ct}\SS_{<0}$ and $S_{<0}\in\SS_{<0}$.
Thus there exists $e\in\Hom_{\ct}(M,N[\ell])$ such that $cf=cse$.
Now $c(f-se)=0$ implies that $f-se$ factors through $S_{\ge0}\in\cs$.
Thus $f=se$ and $s^{-1}f=e$ hold in $\cu$, and we have the assertion.

\smallskip
Next we show the injectivity.

Let $\ell\ge0$. Assume that a morphism $f\in\Hom_{\ct}(M,N[\ell])$ is zero in $\cu$. 
Then it factors through $\cs$ (by, for example, \cite[Lemma 2.1.26]{Neeman01b}), that is, there exist $S\in\cs$, $g\in\Hom_{\ct}(M,S)$ and
$a\in\Hom_{\ct}(S,N[\ell])$ such that $f=ag$.
Take a triangle
\[
\xymatrix{
S_{>-\ell}\ar[r]^(0.55){b} & S\ar[r]^(0.4){c} & S_{\le-\ell}\ar[r] & S_{>-\ell}[1] & (S_{>-\ell}\in\SS_{>-\ell},\ S_{\le-\ell}\in\SS_{\le-\ell}).
}
\]
Since $ab=0$ by $S_{>-\ell}\in\SS_{>-\ell}$ and $N[\ell]\in\SS_{>-\ell}{}^{\perp_\ct}$, there exists $d\in\Hom_{\ct}(S_{\le-\ell},N[\ell])$ such that $a=dc$.
\[\xymatrix{
S_{>-\ell}\ar[r]^(0.55){b} & S\ar[r]^(0.4){c}\ar[dr]_a & S_{\le-\ell}\ar[d]^d\\
M\ar[rr]_f\ar[ru]_g&&N[\ell]
}
\]
First we assume $\ell>0$. Then $cg=0$ because $M\in{}^{\perp_\ct}\SS_{<0}$ and $S_{\le-\ell}\in\SS_{\le-\ell}\subset \cs_{<0}$. Thus we have $f=dcg=0$.

Next we assume $\ell=0$. Take a triangle
\[
\xymatrix{
P\ar[r] & S_{\le0}\ar[r]^(0.47){e} & S_{<0}\ar[r] & P[1] & (P\in\cp,\ S_{<0}\in\SS_{<0}).
}
\]
Then we have $ecg=0$ by $M\in{}^{\perp_\ct}\SS_{<0}$ and $S_{<0}\in\SS_{<0}$.
Thus $cg$ factors through $P$, and $f=dcg=0$ in $\frac{\ct}{[\cp]}$.
\end{proof}

\subsection{The triangle equivalence}\label{subsection: Triangle equivalence}

Let $\ct$ be a triangulated category and $\cp$ a presilting subcategory of $\ct$ satisfying (P1) and (P2). Keep the notation in Section~\ref{ss:add-equiv}.
The aim of this subsection is to show that the additive category
$\frac{\cz}{[\cp]}$ has the structure of a triangulated category,
and that the equivalence given in Theorem
\ref{equivalence} is a triangle equivalence.

\begin{lemma}\label{mutation pair}
The pair $(\cz,\cz)$ forms a $\cp$-mutation pair (see Section~\ref{section: Mutation pair}). More precisely,
 for $T\in\ct$, the following conditions are equivalent.
\begin{itemize}
\item[(a)] $T\in\cz$.
\item[(b)] There exists a triangle $X\xrightarrow{a}P\to T\to X[1]$ with $X\in\cz$ and a left $\cp$-approximation $a$.
\item[(c)] There exists a triangle $T\to P'\xrightarrow{b}Y\to T[1]$ with $Y\in\cz$ and a right $\cp$-approximation $b$.
\end{itemize}
\end{lemma}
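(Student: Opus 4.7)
The plan is to establish the equivalences (a) $\Leftrightarrow$ (b) $\Leftrightarrow$ (c); once these are in hand, the $\cp$-mutation pair conditions come nearly for free. Indeed, by the very definition $\cz=({}^{\perp_\ct}\cp[>\hspace{-3pt}0])\cap(\cp[<\hspace{-3pt}0]{}^{\perp_\ct})$, we have $\cp\subset\cz$ (since $\cp$ is presilting), and $\Hom_\ct(\cp,\cz[1])=0=\Hom_\ct(\cz,\cp[1])$; the existence of the two triangles required for a $\cp$-mutation pair is then precisely (b) and (c) applied to each $Z\in\cz$.

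For (a) $\Rightarrow$ (c), I would take $T\in\cz$ and, invoking the covariant finiteness part of assumption (P1), choose a left $\cp$-approximation $T\to P'$ and complete it to a triangle $T\to P'\xrightarrow{b}Y\to T[1]$. To see that $b$ is a right $\cp$-approximation, apply $\Hom_\ct(\cp,-)$ to this triangle and use $\Hom_\ct(\cp,T[1])=0$ (from $T\in\cz$) to conclude that $\Hom_\ct(\cp,P')\to\Hom_\ct(\cp,Y)$ is surjective. For $Y\in\cz$, apply $\Hom_\ct(\cp,-)$ to get $\Hom_\ct(\cp,Y[i])=0$ for $i>0$ from $\Hom_\ct(\cp,P'[i])=0$ (presilting) and $\Hom_\ct(\cp,T[i+1])=0$; dually, apply $\Hom_\ct(-,\cp[i])$ and use presilting together with $\Hom_\ct(T,\cp[i-1])=0$ for $i\ge 2$. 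The only delicate case is $\Hom_\ct(Y,\cp[1])$: the relevant segment reads $\Hom_\ct(P',\cp)\to\Hom_\ct(T,\cp)\to\Hom_\ct(Y,\cp[1])\to\Hom_\ct(P',\cp[1])=0$, and surjectivity of the first arrow (because $T\to P'$ is a left $\cp$-approximation) forces the middle arrow to be zero, yielding $\Hom_\ct(Y,\cp[1])=0$.

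The converse (c) $\Rightarrow$ (a) is a direct long-exact-sequence check: applying $\Hom_\ct(-,\cp[i])$ and $\Hom_\ct(\cp,-)$ to the triangle $T\to P'\to Y\to T[1]$, the relevant outer terms vanish by $Y\in\cz$ and presilting of $\cp$; the single non-trivial case $\Hom_\ct(T,\cp[1])=0$ uses that $b$ is a right $\cp$-approximation to force a connecting map to vanish. The equivalence (a) $\Leftrightarrow$ (b) is entirely dual, using contravariant finiteness of $\cp$ from (P1) in place of covariant finiteness; there is no new content.

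The main obstacle will be bookkeeping the interaction between the approximation property and the long exact sequences, specifically identifying in each of the four one-sided implications the unique borderline degree ($i=1$) where presilting alone does not suffice and where one must invoke surjectivity of a Hom map induced by the approximation to force the connecting homomorphism to be zero. Everything else reduces to reading off vanishings from the definition of $\cz$ and from $\cp$ being presilting.
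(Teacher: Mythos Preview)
Your approach is correct and essentially the same as the paper's, which spells out (a)$\Leftrightarrow$(b) and declares (a)$\Leftrightarrow$(c) dual; you simply make the opposite choice. One small slip in your sketch of (c)$\Rightarrow$(a): the borderline case requiring that $b$ be a right $\cp$-approximation is $\Hom_\ct(\cp,T[1])=0$, not $\Hom_\ct(T,\cp[1])=0$. The latter sits between $\Hom_\ct(P',\cp[1])=0$ (presilting) and $\Hom_\ct(Y,\cp[2])=0$ (from $Y\in\cz$), so it vanishes automatically. It is the former that appears in the segment $\Hom_\ct(\cp,P')\to\Hom_\ct(\cp,Y)\to\Hom_\ct(\cp,T[1])\to\Hom_\ct(\cp,P'[1])=0$, and here you need surjectivity of the first arrow---i.e.\ the right $\cp$-approximation property of $b$---to conclude.
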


\begin{proof}
We only show the equivalence of (a) and (b) since the equivalence
of (a) and (c) can be shown dually.

(b)$\Rightarrow$(a) By applying $\Hom_{\ct}(\cp,-)$ to the triangle, we obtain $\Hom_{\ct}(\cp,T[>\hspace{-3pt}0])=0$.
Similarly by applying $\Hom_{\ct}(-,\cp)$ to the triangle, we obtain $\Hom_{\ct}(T,\cp[>\hspace{-3pt}0])=0$. Therefore $T\in\cz$.

(a)$\Rightarrow$(b) By (P1), there exists a triangle $X\xrightarrow{a} P\xrightarrow{b}T\to X[1]$
with a right $\cp$-approximation $b$.
By applying $\Hom_{\ct}(\cp,-)$ to the triangle, we obtain $\Hom_{\ct}(\cp,X[>\hspace{-3pt}0])=0$.
Similarly by applying $\Hom_{\ct}(-,\cp)$ to the triangle we obtain that $\Hom_{\ct}(X,\cp[>\hspace{-3pt}0])=0$ holds and that $a$ is a left $\cp$-approximation. Therefore $X\in\cz$.
\end{proof}

As a consequence of this lemma, the category $\frac{\cz}{[\cp]}$ has the natural structure of a triangulated category, according to Theorem~\ref{triangle structure of Z/P}.
Now we prove the following result.


\begin{theorem}\label{t:triangle-equivalence}
The category $\frac{\cz}{[\cp]}$ has a structure of a triangulated category given in Theorem~\ref{triangle structure of Z/P} such that
the functor $\bar{\rho}\colon\frac{\cz}{[\cp]}\to\cu$ in Theorem~\ref{equivalence} is a triangle equivalence.
\end{theorem}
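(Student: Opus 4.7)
The plan is to upgrade the additive equivalence $\bar\rho$ of Theorem~\ref{equivalence} to a triangle equivalence. This requires (i) a natural isomorphism $\eta\colon\bar\rho\circ\langle 1\rangle\Rightarrow [1]\circ\bar\rho$ comparing the shift $\langle 1\rangle$ on $\frac{\cz}{[\cp]}$ produced in Theorem~\ref{triangle structure of Z/P} with the shift on $\cu$ inherited from $\ct$, and (ii) the verification that every distinguished triangle in $\frac{\cz}{[\cp]}$ is sent, after identification via $\eta$, to a distinguished triangle in $\cu$. Since $\bar\rho$ is already essentially surjective and fully faithful, these two compatibilities suffice.

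For (i), the defining triangle for the shift,
\[X\xrightarrow{\iota_X}P_X\to X\langle 1\rangle\xrightarrow{h_X}X[1]\qquad(P_X\in\cp),\]
becomes, after applying $\rho$, a triangle in $\cu$ with vanishing middle term; hence $\eta_X:=\rho(h_X)\colon\bar\rho(X\langle 1\rangle)\to\bar\rho(X)[1]$ is an isomorphism. To promote this to a natural transformation, I would lift any morphism in $\frac{\cz}{[\cp]}$ to some $f\colon X\to Y$ in $\cz$, complete it to a morphism between the two defining triangles (possible since $\iota_X$ is a left $\cp$-approximation), and apply $\rho$ to the resulting commutative square. Independence of the chosen lift, and of the induced morphism on $\langle 1\rangle$, follows because the ambiguities lie in morphisms factoring through objects of $\cp\subset\cs$, and all such morphisms are annihilated by $\rho$.

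For (ii), by Theorem~\ref{triangle structure of Z/P}(b) every triangle in $\frac{\cz}{[\cp]}$ comes from a commutative diagram of triangles in $\ct$
\[
\xymatrix{
X\ar[r]^f\ar@{=}[d]&Y\ar[r]^g\ar[d]&Z\ar[r]^h\ar[d]^a&X[1]\ar@{=}[d]\\
X\ar[r]^{\iota_X}&P_X\ar[r]&X\langle 1\rangle\ar[r]^{h_X}&X[1]
}
\]
with $X,Y,Z\in\cz$, the resulting triangle being $X\xrightarrow{\bar f}Y\xrightarrow{\bar g}Z\xrightarrow{\bar a}X\langle 1\rangle$. Applying $\rho$ to the top row yields a distinguished triangle in $\cu$ whose connecting morphism factors as $\rho(h)=\rho(h_X)\rho(a)=\eta_X\cdot\bar\rho(\bar a)$. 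Thus the image under $(\bar\rho,\eta)$ of the triangle in $\frac{\cz}{[\cp]}$ coincides with a distinguished triangle in $\cu$, as required. The step I expect to be the main obstacle is the careful bookkeeping of naturality and well-definedness: one must check that $\eta$ is natural in morphisms of $\frac{\cz}{[\cp]}$ (not merely of $\cz$) and that the triangle produced in $\cu$ does not depend on the diagrammatic presentation of the triangle in $\frac{\cz}{[\cp]}$. Both issues reduce to the observation that morphisms factoring through $\cp$ vanish in $\cu$, combined with the presilting hypothesis on $\cp$ which rigidifies the relevant lifting ambiguities; once these are handled, Theorem~\ref{equivalence} completes the proof.
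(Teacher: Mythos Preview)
Your proposal is correct and follows essentially the same route as the paper's proof: apply $\rho$ to the defining triangle of $\langle 1\rangle$ to obtain the natural isomorphism $\eta$, then apply $\rho$ to the commutative diagram~\eqref{define a triangle} to see that distinguished triangles go to distinguished triangles. The paper's proof is considerably terser---it simply asserts that $\rho(h_X)$ ``defines a natural isomorphism'' without spelling out the naturality argument you sketch---so your additional bookkeeping on lifts and ambiguities is a reasonable elaboration rather than a different approach; note, however, that the presilting hypothesis is not used directly in this step beyond having already ensured (via Lemma~\ref{mutation pair}) that $\frac{\cz}{[\cp]}$ carries the triangle structure of Theorem~\ref{triangle structure of Z/P}.
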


\begin{proof}
We need to show that the equivalence $\bar{\rho}\colon\frac{\cz}{[\cp]}\rightarrow\cu$ is a triangle functor.

Applying the triangle functor $\rho$ to the triangle
$X\to P_X\to X\langle1\rangle\to X[1]$ in Theorem~\ref{triangle structure of Z/P}(a), we have an isomorphism
$X\langle1\rangle\to X[1]$ in $\cu$, which defines a natural isomorphism $\bar{\rho}\circ\langle 1\rangle\simeq [1]\circ\bar{\rho}$.

Let
\begin{equation}\label{triangle in U}
\xymatrix{
X\ar[r]^{\overline{f}} & Y\ar[r]^{\overline{g}} & Z 
\ar[r]^{\overline{a}} & X\langle1\rangle
}
\end{equation}
be a triangle given in Theorem~\ref{triangle structure of Z/P}(b).
Applying the triangle functor $\ct\to\cu$ to \eqref{define a triangle},
we have a commutative diagram
\[
\xymatrix{
X\ar[r]^f\ar@{=}[d]&Y\ar[r]^g\ar[d]&Z\ar[r]^h\ar[d]^a&X[1]\ar@{=}[d]\\
X\ar[r]&0\ar[r]&X\langle1\rangle\ar[r]^\sim&X[1]
}
\]
of triangles in $\cu$. Thus the image of \eqref{triangle in U}
by the functor $\frac{\cz}{[\cp]}\to\cu$ is a triangle.
\end{proof}

We remark that more general versions of Theorems~\ref{equivalence} and~\ref{t:triangle-equivalence} have since been established in \cite{Wei15,Li16,N,IY}.

\subsection{The correspondence between silting subcategories}

Let $\ct$ be a triangulated category.
Recall that $\silt\ct$ (respectively, $\presilt\ct$) is the class of silting (respectively, presilting) subcategories of $\ct$, where we identify two (pre)silting subcategories $\MM$ and $\NN$ of $\ct$ when $\add\MM=\add\NN$.

Fix a presilting subcategory $\cp$ of $\ct$ and denote by $\silt_{\cp}\ct$ (respectively, $\presilt_{\cp}\ct$) the class of silting (respectively, presilting) subcategories of $\ct$ containing $\cp$. Assume further that the conditions (P1) and (P2) are satisfied. Keep the notation in Section~\ref{ss:add-equiv}.

\begin{theorem}\label{t:silting-reduction}
The natural functor $\rho\colon\ct\to\cu$ induces bijections $\silt_{\cp}\ct\to\silt\cu$ and $\presilt_{\cp}\ct\to\presilt\cu$.
\end{theorem}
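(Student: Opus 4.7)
The strategy is to use the triangle equivalence $\bar\rho\colon\cz/[\cp]\xrightarrow{\simeq}\cu$ of Theorem~\ref{t:triangle-equivalence} to transport (pre)silting subcategories, reducing the problem to producing a bijection between $\silt_\cp\ct$ (resp.\ $\presilt_\cp\ct$) and $\silt(\cz/[\cp])$ (resp.\ $\presilt(\cz/[\cp])$). A preliminary observation is that any $\cm\in\presilt_\cp\ct$ is automatically contained in $\cz$: since $\cp\subset\cm$ and $\Hom_\ct(\cm,\cm[>\hspace{-3pt}0])=0$, one has $\Hom_\ct(\cp,\cm[>\hspace{-3pt}0])=0=\Hom_\ct(\cm,\cp[>\hspace{-3pt}0])$, so $\cm\subset({}^{\perp_\ct}\cp[>\hspace{-3pt}0])\cap(\cp[<\hspace{-3pt}0]^{\perp_\ct})=\cz$.

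The forward map $F$ sends $\cm$ to its image $\bar\cm$ in $\cz/[\cp]$. The presilting property of $\bar\cm$ follows from Lemma~\ref{fully faithful}: for $M,M'\in\cm$ and $i>0$, the natural map $\Hom_\ct(M,M'[i])\to\Hom_\cu(M,M'[i])$ is a bijection, so the vanishing in $\ct$ transfers to $\cu$, and hence to $\cz/[\cp]$ via $\bar\rho$. In the silting case, $\thick_{\cz/[\cp]}\bar\cm=\cz/[\cp]$ (equivalently $\thick_\cu\rho(\cm)=\cu$) follows from the fact that $\rho\colon\ct\to\cu$ is an essentially surjective triangle functor, so the generation of $\ct$ from $\cm$ by shifts, cones, and summands descends to generation of $\cu$ from $\rho(\cm)$.

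The backward map $G$ sends $\bar\cn$ to the full subcategory $\cn\subset\cz$ consisting of all objects whose class in $\cz/[\cp]$ belongs to $\bar\rho^{-1}(\bar\cn)$. By construction $\cp\subset\cn$ (since $\cp$ is zero in $\cz/[\cp]$) and $\cn=\add_\ct\cn$. Lemma~\ref{fully faithful} again gives the presilting property of $\cn$ in $\ct$. In the silting case, $\thick_\ct\cn$ is a thick subcategory of $\ct$ containing $\cs=\thick_\ct\cp$; by the standard Verdier/Thomason bijection between thick subcategories of $\ct$ containing $\cs$ and thick subcategories of $\cu=\ct/\cs$, the subcategory of $\cu$ corresponding to $\thick_\ct\cn$ contains $\bar\cn$ and therefore equals $\cu$, whence $\thick_\ct\cn=\ct$.

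It remains to check that $F$ and $G$ are mutually inverse. The identity $F\circ G=\mathrm{id}$ is clear by construction. The main obstacle is $G\circ F=\mathrm{id}$: given $\cm\in\silt_\cp\ct$ and $X\in\cz$ whose class lies in $\bar\rho^{-1}(\bar\cm)$, one must show $X\in\cm$. The condition gives an isomorphism $X\cong M$ in $\cz/[\cp]$ for some $M\in\cm$, which unwinds to morphisms $f\colon X\to M$ and $g\colon M\to X$ in $\ct$ with $1_X-gf=\iota\pi$ factoring as $X\xrightarrow{\pi}P\xrightarrow{\iota}X$ through some $P\in\cp$. Then the morphisms $\binom{f}{\pi}\colon X\to M\oplus P$ and $(g\ \iota)\colon M\oplus P\to X$ compose to $1_X$, so $X$ is a direct summand in $\ct$ of $M\oplus P\in\cm$ (using $\cp\subset\cm$). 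Since $\cm$ is closed under summands, $X\in\cm$. The identical argument, read with ``presilting'' in place of ``silting'' throughout, yields the bijection $\presilt_\cp\ct\to\presilt\cu$.
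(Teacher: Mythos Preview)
Your proof is correct and follows essentially the same approach as the paper. Both use the triangle equivalence $\bar\rho\colon\cz/[\cp]\simeq\cu$ and Lemma~\ref{fully faithful} to transport the (pre)silting property back and forth, and both invoke the bijection between thick subcategories of $\ct$ containing $\cs$ and thick subcategories of $\cu$ for the silting case. Your argument is in fact more explicit than the paper's at the injectivity step: the paper simply asserts injectivity ``since $\rho$ induces an equivalence $\cz/[\cp]\simeq\cu$'', whereas you spell out the underlying reason by showing that an isomorphism $X\cong M$ in $\cz/[\cp]$ forces $X$ to be a direct summand of $M\oplus P$ in $\ct$ for some $P\in\cp$, hence $X\in\cm$.
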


\begin{proof}
(i) We will show that $\rho$ induces a map $\presilt_{\cp}\ct\to\presilt\cu$.

Let $\MM$ be a presilting subcategory of $\ct$ containing $\cp$.
Then we have $\MM\subset\cz$.
By Lemma \ref{fully faithful}, we have
\[\Hom_{\cu}(\MM,\MM[>\hspace{-3pt}0])=\Hom_{\ct}(\MM,\MM[>\hspace{-3pt}0])=0.\]
Thus $\rho(\MM)$ is a presilting subcategory of $\cu$.

(ii) We will show that the map $\presilt_{\cp}\ct\to\presilt\cu$ is bijective.

Since $\rho$ induces an equivalence $\frac{\cz}{[\cp]}\simeq\cu$, the correspondence $\presilt_{\cp}\ct\to\presilt\cu$ is injective.
We will show the surjectivity.
For a presilting subcategory $\NN$ of $\cu$, we define a subcategory $\MM$ of $\ct$ by
\[\MM:=\{X\in\cz\ |\ \rho(X)\in\NN\}.\]
Then $\cp\subset\cm$ and $\rho(\MM)=\NN$ hold. Moreover, by Lemma \ref{fully faithful}, we have
\[\Hom_{\ct}(\MM,\MM[>\hspace{-3pt}0])=\Hom_{\cu}(\NN,\NN[>\hspace{-3pt}0])=0.\]
Thus $\cm\in\presilt_{\cp}\ct$ holds, and the assertion follows.

(iii) We will show that $\rho$ induces a bijective map $\silt_{\cp}\ct\to\silt\cu$.

Let $\MM$ be a presilting subcategory of $\ct$ contaning $\cp$ and
$\NN:=\rho(\MM)$ the corresponding presilting subcategory of $\cu$.
By (ii), it is enough to show that $\thick_{\ct}\MM=\ct$ holds
if and only if $\thick_{\cu}\NN=\cu$ holds. This follows from the fact that
$\rho$ induces a bijection between thick subcategories of $\ct$ containing $\cp$ and thick subcategories of $\cu$ (\cite[Proposition 2.3.1 (c)${}^{\mathrm{bis}}$ (d)${}^{\mathrm{bis}}$]{Verdier96}).
\end{proof}

Moreover the bijection above is compatible with the natural partial order defined in Section~\ref{ss:silting-and-t-structure}.

\begin{corollary}\label{iso of posets}
The bijection $\silt_{\cp}\ct\to\silt\cu$ in Theorem \ref{t:silting-reduction} is an isomorphism of partially ordered sets.
\end{corollary}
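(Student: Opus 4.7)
The plan is to exploit Lemma~\ref{fully faithful}, which identifies positive-degree Hom-spaces in $\ct$ and $\cu$ between objects of $\cz$. Since the bijection $\silt_\cp\ct\to\silt\cu$ is induced by $\rho$, and the partial order on each side is defined via vanishing of positive-degree Homs, translating the order condition across the bijection is essentially a direct computation.

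More precisely, I would first observe that any silting subcategory $\cm$ of $\ct$ containing $\cp$ must lie in $\cz$: indeed, $\cm$ is presilting and contains $\cp$, so $\Hom_\ct(\cp,\cm[>\hspace{-3pt}0])=0=\Hom_\ct(\cm,\cp[>\hspace{-3pt}0])$, which, combined with the description of $\cs_{>0}$ and $\cs_{<0}$ as iterated extensions of shifts of $\cp$ (using that these subcategories are built inside ${}^{\perp_\ct}\cp[>\hspace{-3pt}0]$ and $\cp[<\hspace{-3pt}0]{}^{\perp_\ct}$ via standard long exact sequence arguments), gives $\cm\subset({}^{\perp_\ct}\cs_{<0})\cap(\cs_{>0}{}^{\perp_\ct})=\cz$.

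Next, let $\cm,\cn\in\silt_\cp\ct$. Since $\cm,\cn\subset\cz\subset{}^{\perp_\ct}\cs_{<0}$ and $\cn\subset\cs_{>0}{}^{\perp_\ct}$, Lemma~\ref{fully faithful} yields a bijection
\[
\Hom_\ct(\cm,\cn[\ell])\xrightarrow{\ \sim\ }\Hom_\cu(\rho(\cm),\rho(\cn)[\ell])\qquad(\ell>0).
\]
Consequently $\Hom_\ct(\cm,\cn[>\hspace{-3pt}0])=0$ if and only if $\Hom_\cu(\rho(\cm),\rho(\cn)[>\hspace{-3pt}0])=0$, which by definition means $\cm\ge\cn$ in $\silt\ct$ if and only if $\rho(\cm)\ge\rho(\cn)$ in $\silt\cu$. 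This shows that the bijection of Theorem~\ref{t:silting-reduction} preserves and reflects the partial order, which is precisely the claim.

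There is no real obstacle here; the only point that deserves care is the verification that silting subcategories of $\ct$ containing $\cp$ automatically land in $\cz$, so that Lemma~\ref{fully faithful} is applicable. Once that is in place, the argument is a one-line translation of the definition of the partial order through the Hom-preserving functor $\rho|_\cz$.
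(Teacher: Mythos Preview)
Your proof is correct and follows essentially the same approach as the paper: both arguments note that silting subcategories containing $\cp$ lie in $\cz$, then invoke Lemma~\ref{fully faithful} to identify $\Hom_\ct(\cm,\cn[\ell])$ with $\Hom_\cu(\rho(\cm),\rho(\cn)[\ell])$ for $\ell>0$ and conclude. Your justification of $\cm\subset\cz$ is slightly more elaborate than needed (the vanishing $\Hom_\ct(\cp,\cm[>\hspace{-3pt}0])=0=\Hom_\ct(\cm,\cp[>\hspace{-3pt}0])$ is already the definition of $\cz$), but this is harmless.
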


\begin{proof}
Let $\MM$ and $\NN$ be silting subcategories of $\ct$ containing $\cp$.
Then $\MM\subset\cz$ and $\NN\subset\cz$ hold.
By Lemma \ref{fully faithful}, we have
\[\Hom_{\ct}(\MM,\NN[>\hspace{-3pt}0])\simeq\Hom_{\cu}(\MM,\NN[>\hspace{-3pt}0]).\]
Thus $\MM\ge\NN$ if and only if $\rho(\MM)\ge\rho(\NN)$.
\end{proof}

Next we discuss the completion of ``almost complete" presilting subcategories.

Let $\cm$ be a silting subcategory of $\ct$ containing $\cp$. Then $\cm\subset\cz$ and hence $\cp$ is functorially finite in $\cm$ by (P1), and therefore each $X\in\cm$ admits triangles
\[\xymatrix{X\ar[r]^f&P'\ar[r]&Y_X\ar[r]&X[1]}\ \mbox{ and }\ \xymatrix{Z_X\ar[r]&P''\ar[r]^g&X\ar[r]&Z[1]}\]
in $\ct$ with a left $\cp$-approximation $f$ of $X$ and a right $\cp$-approximation $g$ of $X$. It was shown in \cite[Theorem 2.31]{AI} that
\[\mu^-_{\cp}(\cm):=\add(\cp\cup\{Y_X\mid X\in\cm\})\ \mbox{ and }\ \mu^+_{\cp}(\cm):=\add(\cp\cup\{Z_X\mid X\in\cm\})\]
are again silting subcategories of $\ct$, which we call \emph{the left mutation and the right mutation of $\cm$ at $\cp$}, respectively. Moreover the maps
\[\mu^-_{\cp}\colon\silt_\cp\ct\to\silt_\cp\ct\ \mbox{  and }\ \mu^+_{\cp}\colon\silt_\cp\ct\to\silt_\cp\ct\]
are mutually inverse \cite[Proposition 2.33]{AI}.

The following result was shown in \cite[Theorem 2.44]{AI} under the strong restriction that $\thick\cp$ is functorially finite in $\ct$.

\begin{corollary}
Assume that $\ct$ is Krull--Schmidt.
Assume that there exists an indecomposable object $X_0\in\ct$ such that $X_0\notin\cp$ and $\cm:=\add(\cp\cup\{X_0\})$ is a silting subcategory of $\ct$.
Then we have
\[\silt_\cp\ct=\{\mu^{+i}_\cp(\cm),\ \cm,\ \mu^{-i}_\cp(\cm)\mid i>0\}.\]
\end{corollary}

\begin{proof}
By construction, there exists an indecomposable object $X_i\in\ct$ for any $i\in\Z$ such that $\mu^{+i}_\cp(\cm)=\add(\cp\cup\{X_i\})$ and $\mu^{-i}_\cp(\cm)=\add(\cp\cup\{X_{-i}\})$ for any $i>0$. Then $X_i=X_0\langle i\rangle$ holds by our construction.
By Theorem \ref{t:silting-reduction}, we have a bijection $\silt_{\cp}\ct\to\silt\cu$. In particular $\cu$ has an indecomposable silting object $X_0$. By \cite[Theorem 2.26]{AI}, we have $\silt\cu=\{X_0\langle i\rangle\mid i\in\Z\}$. Therefore $\silt_\cp\ct$ has the desired description. 
\end{proof}

\subsection{A theorem of Buchweitz}

Recall that a noetherian ring $A$ is called \emph{Iwanaga--Gorenstein} if $A$ has finite injective dimension as an $A$-module and also as an $A^{\op}$-module (see e.g. \cite{EJ}).
In this case, we define the category of \emph{Cohen--Macaulay $A$-modules} (also often called \emph{modules of Gorenstein dimension zero}, \emph{Gorenstein projective modules}, or \emph{totally reflexive modules}) by
\[\CM A:=\{X\in\mod A\mid\Ext_A^i(X,A)=0\ \mbox{ for any $i>0$}\}.\]
This has a natural structure of a Frobenius category whose projective-injective objects are exactly the projective $A$-modules, and we denote by $\underline{\CM} A$ its stable category. We recover the following classical result  due to Buchweitz as a consequence of Theorem~\ref{t:triangle-equivalence}.

\begin{theorem}[{\cite[Theorem 4.4.1(b)]{Buchweitz87}}]\label{buchweitz}
Let $A$ be an Iwanaga--Gorenstein ring. Then
\[\CM A=\{X\in\cd^{\bo}(\mod A)\mid\Hom_{\cd^{\bo}(\mod A)}(X,A[>\hspace{-3pt}0])=0=\Hom_{\cd^{\bo}(\mod A)}(A[<\hspace{-3pt}0],X)\}\]
holds, and the embedding $\CM A\to\mod A$ induces a triangle equivalence
\[\underline{\CM} A\xrightarrow{\simeq}\cd^{\rm b}(\mod A)/\ck^{\rm b}(\proj A).\]
\end{theorem}

To prove this, we need the following duality (see \cite{Hartshorne} for the commutative case).

\begin{lemma}[{\cite[Corollary 2.11]{Miyachi}}]\label{* dual}
Let $A$ be an Iwanaga-Gorenstein ring. Then we have a duality
$(-)^*=\RHom_A(-,A)\colon\cd^{\bo}(\mod A)\to\cd^{\bo}(\mod A^{\op})$, which has a quasi-inverse $(-)^*=\RHom_{A^{\op}}(-,A)\colon\cd^{\bo}(\mod A^{\op})\to\cd^{\bo}(\mod A)$.
\end{lemma}

\begin{proof}[Proof of Theorem~\ref{buchweitz}]
It suffices to prove the first equality. In fact, let $\ct:=\cd^{\rm b}(\mod A)$, $\cp:=\proj A$ and $\cs:=\ck^{\bo}(\proj A)$. Then
$\cz=({}^{\perp_\ct}\SS_{<0})\cap(\SS_{>0}{}^{\perp_\ct})$ is the right hand side of the desired equality. Thus $\frac{\cz}{[\cp]}=\underline{\CM} A$ holds, and by Theorem~\ref{t:triangle-equivalence} we obtain the first triangle equivalence.

Let $(\cd^{\le0}(\mod B),\cd^{\ge0}(\mod B))$ be the standard t-structure on $\cd^{\rm b}(\mod B)$ for $B=A$ or $A^{\op}$.
Let $\ct':=\cd^{\rm b}(\mod A^{\op})$, $\cp':=\proj A^{\op}$ and $\cs':=\ck^{\bo}(\proj A^{\op})$. Then we have
\begin{equation}\label{>0 calculation}
\SS_{>0}{}^{\perp_\ct}=A[<\hspace{-3pt}0]^{\perp_\ct}=\cd^{\le0}(\mod A)\ \mbox{ and }\ \SS'_{>0}{}^{\perp_{\ct'}}=A[<\hspace{-3pt}0]^{\perp_{\ct'}}=\cd^{\le0}(\mod A^{\op}).
\end{equation}
In particular, we have $\mod A\subset \SS_{>0}{}^{\perp_\ct}$ and
\[\mod A\cap\cz=\mod A\cap({}^{\perp_\ct}\SS_{<0})=\CM A.\]
It is enough to show $\cz\subset\mod A$.
By the duality in Lemma~\ref{* dual}, we have $\cs_{<0}=(\cs'_{>0})^*$ and
\[{}^{\perp_\ct}\SS_{<0}=(\SS'_{>0}{}^{\perp_{\ct'}})^*\stackrel{\eqref{>0 calculation}}{=}(\cd^{\le0}(\mod A^{\op}))^*\subset\cd^{\ge0}(\mod A).\]
Therefore
$\cz=({}^{\perp_\ct}\SS_{<0})\cap(\SS_{>0}{}^{\perp_\ct})\subset\cd^{\le0}(\mod A)\cap\cd^{\ge0}(\mod A)=\mod A$ holds.
\end{proof}

Another application of Theorem~\ref{equivalence} is the following.

\begin{corollary}\label{cor:relative-singularity-cat}
Let $k$ be a field and $A$ be a finite-dimensional $k$-algebra. Assume that $P$ is a finitely generated projective $A$-module which has finite injective dimension. Then the triangle quotient $\cd^{\rm b}(\mod A)/\thick P$ is Hom-finite and Krull--Schmidt. 
\end{corollary}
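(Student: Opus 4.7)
The plan is to apply Theorems~\ref{equivalence} and \ref{t:triangle-equivalence} to $\ct := \cd^{\rm b}(\mod A)$ and $\cp := \add P$, so that the triangle quotient $\cu := \cd^{\rm b}(\mod A)/\thick P$ is realised as the subfactor $\cz/[\cp]$, from which both Hom-finiteness and the Krull--Schmidt property can be read off.

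First I will verify the hypotheses (P1) and (P2). Since $A$ is a finite-dimensional $k$-algebra, the category $\ct$ is Hom-finite over $k$, so condition (F) of Section~\ref{ss:notation} holds and $\cp=\add P$ is functorially finite in $\ct$, giving (P1). For (P2), the projectivity of $P$ yields $\Hom_\ct(P, X[\ell])\cong\Hom_A(P,H^\ell(X))$, which vanishes for $\ell$ larger than the top cohomological degree of $X$; dually, a bounded injective resolution of $P$ (which exists by the finite injective dimension hypothesis) shows that $\Hom_\ct(X, P[\ell])=0$ for $\ell\gg0$.

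Theorems~\ref{equivalence} and \ref{t:triangle-equivalence} then give a triangle equivalence $\cz/[\cp]\simeq\cu$, and Lemma~\ref{fully faithful} identifies each $\Hom_\cu(X,Y)$ (for $X,Y\in\cz$) with a quotient of the finite-dimensional space $\Hom_\ct(X,Y)$. Combined with the essential surjectivity $\cz\to\cu$, this yields Hom-finiteness of $\cu$ over $k$.

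For the Krull--Schmidt property, I note that in any Hom-finite $k$-linear additive category all endomorphism rings are finite-dimensional, hence semi-perfect, so the Krull--Schmidt property reduces to idempotent completeness. To obtain the latter I plan to apply Theorem~\ref{c:idempotent-completeness-of-co-heart-implies-idempotent-completeness}, for which an idempotent complete silting subcategory of $\cu$ must be exhibited. The natural candidate is the image $\rho(\add A)$: since $A$ is projective, $\add A\subset\cz$, and $\add A$ is a silting subcategory of $\ct$ containing $\cp$, so Theorem~\ref{t:silting-reduction} shows that $\rho(\add A)$ is silting in $\cu$. The endomorphism algebra $B:=\End_\cu(\rho(A))$ is a quotient of the finite-dimensional algebra $A^{\op}$, hence is itself finite-dimensional over $k$; consequently $\rho(\add A)\simeq\proj B^{\op}$ is Krull--Schmidt, and in particular idempotent complete. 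I expect this Krull--Schmidt step to be the main subtlety, since triangle quotients typically fail to preserve idempotent completeness; it is resolved here precisely because the standard silting $\add A\subset\ct$ is compatible with the silting reduction by $\cp$.
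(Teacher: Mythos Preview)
Your verification of (P1), (P2) and the Hom-finiteness are fine and match the paper. The gap is in the Krull--Schmidt step: you assert that $\add A$ is a \emph{silting} subcategory of $\ct=\cd^{\rm b}(\mod A)$, but this means $\thick_\ct(A)=\ct$, i.e.\ $\ck^{\rm b}(\proj A)=\cd^{\rm b}(\mod A)$, which holds only when $A$ has finite global dimension. In general $\add A$ is merely presilting in $\ct$, so Theorem~\ref{t:silting-reduction} only gives that $\rho(\add A)$ is presilting in $\cu$, and Theorem~\ref{c:idempotent-completeness-of-co-heart-implies-idempotent-completeness} no longer applies. Thus your argument for idempotent completeness of $\cu$ breaks down without the extra hypothesis of finite global dimension.

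The paper's route avoids this entirely: $\cz=({}^{\perp_\ct}\cs_{<0})\cap(\cs_{>0}{}^{\perp_\ct})$ is a full subcategory of $\ct$ closed under direct summands, so it inherits Hom-finiteness and the Krull--Schmidt property from $\ct=\cd^{\rm b}(\mod A)$. The additive quotient $\cz/[\cp]$ is then automatically Krull--Schmidt (indecomposables of $\cz$ either lie in $\cp$ and become zero, or keep local endomorphism rings in the quotient), and one transports the conclusion along the equivalence $\cz/[\cp]\simeq\cu$. No silting subcategory of $\cu$ is needed. You can repair your proof simply by replacing the silting-based idempotent-completeness argument with this elementary observation about subfactor categories.
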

\begin{proof}
Let $\cp=\add P$. Then (P1) is automatically satisfied. Thanks to the assumption that $P$ is projective of finite injective dimension, (P2) is also satisfied. Define the full subcategory $\cz$ of $\cd^{\rm b}(\mod A)$ as in Section~\ref{ss:add-equiv}. Then $\cz$ is closed under direct summands. Thus it is Hom-finite and Krull--Schmidt, so is the additive quotient $\frac{\cz}{[\cp]}\simeq \cd^{\rm b}(\mod A)/\thick P$. 
\end{proof}

As an application of Corollary \ref{cor:relative-singularity-cat}, it follows that for a finite-dimensional $k$-algebra $A$ which is right Iwanaga--Gorenstein, i.e. $A_A$ has finite injective dimension, the singularity category $\cd_{\mathrm{sg}}(A)=\cd^{\bo}(\mod A)/\ck^{\bo}(\proj A)$ is Hom-finite and Krull--Schmidt.

\section{t-structures adjacent to silting subcategories}\label{section:adjacent t-structures}

The aim of this section is to show that silting subcategories always yield co-t-structures, and under certain conditions they also yield t-structures.
We refer to \cite{KellerVossieck88,HKM,BeligiannisReiten07,AI,KellerNicolas11,KY,AngeleriMarksVitoria14,PsaroudakisVitoria14} for related results on this subject.
In particular, results in this section will play an important role in Section~\ref{s:silting-red-vs-cy-red}.

Let $\ct$ be a triangulated category. 
For a silting subcategory $\cm$ in $\ct$ satisfying $\cm=\add(\cm)$, we have a co-t-structure $(\ct_{\ge0},\ct_{\le0})$ on $\ct$ by Proposition~\ref{from silting to co-t-structure}, where
\begin{eqnarray*}
\ct_{\ge0}=\bigcup_{n\ge0}\cm[-n]*\cdots*\cm[-1]*\cm\ \mbox{ and }\ 
\ct_{\le0}=\bigcup_{n\ge0}\cm*\cm[1]*\cdots*\cm[n].
\end{eqnarray*}
Now we consider the pair $(\cm[<\hspace{-3pt}0]^{\perp_{\ct}},\cm[>\hspace{-3pt}0]^{\perp_{\ct}})$, where
\begin{eqnarray*}
\cm[<\hspace{-3pt}0]^{\perp_{\ct}}&=&\{X\in\ct\mid\Hom_{\ct}(\cm[<\hspace{-3pt}0],X)=0\},\\
\cm[>\hspace{-3pt}0]^{\perp_{\ct}}&=&\{X\in\ct\mid\Hom_{\ct}(\cm[>\hspace{-3pt}0],X)=0\}.
\end{eqnarray*}
We have the following immediate observations.

\begin{lemma}\label{no Homs}
We have $\cm[<\hspace{-3pt}0]^{\perp_{\ct}}=\ct_{\le0}$ and $\Hom_{\ct}(\cm[<\hspace{-3pt}0]^{\perp_{\ct}},\cm[>\hspace{-3pt}0]^{\perp_{\ct}}[-1])=0$.
\end{lemma}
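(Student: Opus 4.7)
The plan is to leverage Proposition~\ref{from silting to co-t-structure} to translate both statements into facts about the co-t-structure $(\ct_{\ge0},\ct_{\le0})$ and then run straightforward induction on the length of an extension.

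For the first equality $\cm[<\hspace{-3pt}0]^{\perp_{\ct}}=\ct_{\le 0}$, I would use that $(\ct_{\ge 1},\ct_{\le 0})$ is a torsion pair in $\ct$, so in particular $\ct_{\le 0}=\ct_{\ge 1}^{\perp}$. Next I observe that $\cm[-k]\subset\ct_{\ge 1}$ for every $k\ge 1$ (directly from the formula $\ct_{\ge 1}=\bigcup_{n\ge 0}\cm[-n-1]*\cdots*\cm[-1]$), which gives the inclusion $\ct_{\le 0}=\ct_{\ge 1}^{\perp}\subset\cm[<\hspace{-3pt}0]^{\perp_{\ct}}$. For the reverse inclusion, an object of $\ct_{\ge 1}$ is, by the same formula, a finite iterated extension of objects of $\cm[-1],\cm[-2],\ldots$; so applying $\Hom_{\ct}(-,X)$ with $X\in\cm[<\hspace{-3pt}0]^{\perp_{\ct}}$ to the triangles defining such an extension, and inducting on the number of factors, we get $\Hom_{\ct}(\ct_{\ge 1},X)=0$, i.e.\ $X\in\ct_{\le 0}$.

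For the second statement, I would reformulate the two subcategories in a way that matches the generators in Proposition~\ref{from silting to co-t-structure}. By the first part, $X\in\cm[<\hspace{-3pt}0]^{\perp_{\ct}}=\ct_{\le 0}$ means that $X\in\cm*\cm[1]*\cdots*\cm[n]$ for some $n\ge 0$. On the other hand, $Y\in\cm[>\hspace{-3pt}0]^{\perp_{\ct}}[-1]$ means $\Hom_{\ct}(\cm[i],Y[1])=0$ for all $i>0$, i.e.\ $\Hom_{\ct}(\cm[j],Y)=0$ for all $j\ge 0$. Given these two characterisations, induction on $n$ along the triangles witnessing $X\in\cm*\cm[1]*\cdots*\cm[n]$ shows $\Hom_{\ct}(X,Y)=0$: the base case $n=0$ is the vanishing $\Hom_{\ct}(\cm,Y)=0$, and the inductive step uses a triangle $M\to X\to X'\to M[1]$ with $M\in\cm$ and $X'\in\cm[1]*\cdots*\cm[n]=(\cm*\cm[1]*\cdots*\cm[n-1])[1]$, together with the long exact sequence.

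There is no real obstacle here; everything is essentially bookkeeping with the torsion-pair $(\ct_{\ge 1},\ct_{\le 0})$ and the explicit description of $\ct_{\ge 1}$ and $\ct_{\le 0}$ as unions of $*$-products of shifts of $\cm$. The only point that requires (minor) care is making sure the inductions are genuinely finite, which is automatic because each element of $\ct_{\ge 1}$ (resp.\ $\ct_{\le 0}$) lies in a finite iterated extension by construction.
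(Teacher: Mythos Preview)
Your proof is correct and follows essentially the same approach as the paper. The paper's proof is much terser---it simply notes $\cm[<\hspace{-3pt}0]^{\perp_{\ct}}=\ct_{\ge1}^{\perp_{\ct}}=\ct_{\le0}$ from Proposition~\ref{from silting to co-t-structure}(a) and declares the vanishing ``a direct consequence''---but your induction on the length of the $*$-product is exactly the unpacking of that remark.
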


\begin{proof}
By Proposition \ref{from silting to co-t-structure} (a), we have $\cm[<\hspace{-3pt}0]^{\perp_{\ct}}=\ct_{\ge1}{}^{\perp_{\ct}}=\ct_{\le 0}$.
The vanishing of Hom-spaces is then a direct consequence.
\end{proof}

Following Bondarko \cite{Bondarko10}, we say that \emph{$\cm$ has a right adjacent t-structure} if $(\cm[<\hspace{-3pt}0]^{\perp_{\ct}},\cm[>\hspace{-3pt}0]^{\perp_{\ct}})$ forms a t-structure on $\ct$.
By Lemma~\ref{no Homs}, this is equivalent to that $\ct=\cm[<\hspace{-3pt}0]^{\perp_{\ct}}*\cm[\ge\hspace{-3pt}0]^{\perp_{\ct}}$ holds.
Dually, we say that \emph{$\cm$ has a left adjacent t-structure} if $({}^{\perp_{\ct}}\cm[<\hspace{-3pt}0],{}^{\perp_{\ct}}\cm[>\hspace{-3pt}0])$
is a t-structure on $\ct$. Note that we have dual version of Lemma~\ref{no Homs}.

\begin{proposition}\label{p:adjacent-->finiteness}
If $\cm$ has a right (respectively, left) adjacent t-structure, then it is a contravariantly finite (respectively, covariantly finite) subcategory of $\ct$.
\end{proposition}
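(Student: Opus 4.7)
My plan is to give parallel arguments for the two cases, both built from the torsion pair supplied by the respective adjacent t-structure together with the co-t-structure of Proposition~\ref{from silting to co-t-structure}.

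For the right-adjacent case, Lemma~\ref{no Homs} identifies the t-structure as $(\ct_{\le 0}, \cm[>0]^\perp)$, whose underlying torsion pair is $(\ct_{\le 0}, \cm[\ge 0]^\perp)$. For any $X \in \ct$, I form the triangle $X' \to X \to X'' \to X'[1]$ with $X' \in \ct_{\le 0}$ and $X'' \in \cm[\ge 0]^\perp$. The vanishing $\Hom_\ct(\cm, X'') = 0$ yields $\Hom_\ct(\cm, X') \twoheadrightarrow \Hom_\ct(\cm, X)$. Since $X' \in \cm * \cm[1] * \cdots * \cm[n]$ for some $n \ge 0$, a triangle $M \to X' \to Y \to M[1]$ with $M \in \cm$ and $Y \in \cm[1] * \cdots * \cm[n]$ satisfies $\Hom_\ct(\cm, Y) = 0$ by the presilting condition, so $\Hom_\ct(\cm, M) \twoheadrightarrow \Hom_\ct(\cm, X')$. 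The composition $M \to X' \to X$ is then a right $\cm$-approximation of $X$.

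The left-adjacent case follows the same blueprint with a twist. Its torsion pair is $({}^\perp \cm[<0], {}^\perp \cm[\ge 0])$, and for $X \in \ct$ the triangle $X' \to X \to X''$ has $X'' \in {}^\perp \cm[\ge 0]$, so $\Hom_\ct(X'', \cm[i]) = 0$ for $i = 0, 1$; the long exact sequence consequently gives an \emph{isomorphism} $\Hom_\ct(X, \cm) \cong \Hom_\ct(X', \cm)$, so it suffices to construct a left $\cm$-approximation of $X'$.

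The subtlety is that ${}^\perp \cm[<0]$ is not a union of $\cm$-iterated extensions in general --- indeed it fails to contain $\cm$ itself when $\cm$ is not tilting --- so a direct dualisation of the right-adjacent argument breaks down. I will bypass this by applying the torsion pair to $X'[-1]$ and shifting back by $[1]$, producing a triangle $A[1] \to X' \to B[1] \to A[2]$ with $A[1] \in {}^\perp \cm[\le 0]$ and $B[1] \in {}^\perp \cm[\ge 0][1] = {}^\perp \cm[>0] = \ct_{\ge 0}$, the last equality being the dual of Lemma~\ref{no Homs}. The vanishings $\Hom_\ct(A[1], \cm[i]) = 0$ for $i \in \{-1, 0\}$ then yield an isomorphism $\Hom_\ct(B[1], \cm) \cong \Hom_\ct(X', \cm)$. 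Next, the decomposition $B[1] \in \bigcup_n \cm[-n] * \cdots * \cm$ gives a top-down triangle $W \to B[1] \to M \to W[1]$ with $W \in \ct_{\ge 1}$ and $M \in \cm$, and the co-t-structure torsion pair forces $\Hom_\ct(W, \cm) = 0$, so $\Hom_\ct(M, \cm) \twoheadrightarrow \Hom_\ct(B[1], \cm)$. Composing, $X' \to B[1] \to M$ is a left $\cm$-approximation of $X'$, and via the isomorphism $\Hom_\ct(X, M) \cong \Hom_\ct(X', M)$ it lifts uniquely to a left $\cm$-approximation $X \to M$ of $X$. The main obstacle is exactly this left-case asymmetry, which I resolve by the shift-by-one trick that trades ${}^\perp \cm[\ge 0]$ for $\ct_{\ge 0}$ --- the latter does admit the desired $\cm$-extension decomposition via the co-t-structure.
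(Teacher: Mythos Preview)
Your argument for the right-adjacent case is correct and essentially identical to the paper's: the paper phrases it as ``the aisle $\cm[<\hspace{-3pt}0]^{\perp_\ct}=\ct_{\le0}$ is contravariantly finite, and within $\ct_{\le0}$ the co-t-structure triangle $M\to X\to Y$ with $Y\in\ct_{\le-1}$ supplies a right $\cm$-approximation'', which is exactly your two-step composition written more tersely.

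Your left-adjacent argument is also correct, but your diagnosis that ``a direct dualisation breaks down'' is mistaken, and the shift-by-one trick is an unnecessary detour. The genuine dual of the right-adjacent argument does not use the aisle ${}^{\perp_\ct}\cm[<\hspace{-3pt}0]$ at all; it uses the \emph{co-aisle} ${}^{\perp_\ct}\cm[>\hspace{-3pt}0]$, which by the dual of Lemma~\ref{no Homs} equals $\ct_{\ge0}$. The co-aisle of any t-structure is covariantly finite via truncation, so one reduces to $X\in\ct_{\ge0}$; then the co-t-structure gives a triangle $W\to X\to M$ with $W\in\ct_{\ge1}$ and $M\in\cm$, and since $\Hom_\ct(\ct_{\ge1},\cm)=0$ the map $X\to M$ is a left $\cm$-approximation. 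This is precisely the mirror of your right-adjacent proof, with aisle replaced by co-aisle and $\ct_{\le0}$ by $\ct_{\ge0}$. The paper simply says ``we only prove the statement for right adjacent t-structures'', relying on the formal duality recorded in Lemma~\ref{basic right and left}. Your route through the aisle, the second truncation of $X'[-1]$, and the lifting via the isomorphism $\Hom_\ct(X,\cm)\cong\Hom_\ct(X',\cm)$ all work, but they arise only because you dualised the wrong half of the t-structure.
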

\begin{proof}
We only prove the statement for right adjacent t-structures.
Because $(\cm[<\hspace{-3pt}0]^{\perp_{\ct}},\cm[>\hspace{-3pt}0]^{\perp_{\ct}})$ is a t-structure, $\cm[<\hspace{-3pt}0]^{\perp_{\ct}}$ is a contravariantly finite subcategory of $\ct$. 
It is enough to show that any $X\in\cm[<\hspace{-3pt}0]^{\perp_{\ct}}$ has a right $\cm$-approximation. There exists a triangle
$M\xrightarrow{f} X\to Y\to M[1]$ with $M\in\cm$ and $Y\in\cm[\le\hspace{-3pt}0]^{\perp_{\ct}}$ by Proposition \ref{from silting to co-t-structure}(a), from which it follows that $f$ is a right $\cm$-approximation, giving the claim.
\end{proof}

\subsection{Compatible silting subcategories}

In this subsection, we prove  that the property of having an adjacent t-structure is invariant under a suitable change of silting subcategories.
We say that two silting subcategories $\cm$ and $\cn$ of $\ct$ are
\emph{compatible} if there exist integers $\ell,\ell' > 0$ such that
$\cm[-\ell']\ge\cn\ge\cm[\ell]$, or equivalently, $\cn[-\ell]\ge\cm\ge\cn[\ell']$. By Proposition~\ref{from silting to co-t-structure}(b), these two conditions are  equivalent to the following two conditions, respectively,
\begin{eqnarray*}
\cn&\subset&\cm[-\ell']*\cm[1-\ell]*\cdots*\cm[\ell-1]*\cm[\ell],\\
\cm&\subset&\cn[-\ell]*\cn[1-\ell]*\cdots*\cn[\ell-1]*\cn[\ell'].
\end{eqnarray*}
Compatibility gives an equivalence relation on $\silt\ct$.

\begin{theorem}\label{l:shift-t-str}
Let $\ct$ be a triangulated category, and $\cm$ and $\cn$ contravariantly finite (respectively, covariantly finite) silting subcategories of $\ct$ which are compatible with each other.
Then $\cm$ has a right (respectively, left) adjacent t-structure if and only if $\cn$ has a right (respectively, left) adjacent t-structure.
\end{theorem}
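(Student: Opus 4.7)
\medskip
\noindent\textbf{Plan of proof.} By the obvious duality (reversing arrows in $\ct$), it suffices to treat the right adjacent case, so assume $\cm$ has a right adjacent t-structure and $\cn$ is a contravariantly finite silting subcategory compatible with $\cm$. I aim to produce, for each $X\in\ct$, a triangle
\[
X_L\to X\to X_R\to X_L[1]
\]
with $X_L\in\ct^\cm_{\le 0}$ replaced by $X_L\in\ct^\cn_{\le 0}$ and $X_R\in\cn[\ge 0]^{\perp_{\ct}}$. Combined with the Hom-vanishing of Lemma~\ref{no Homs} applied to $\cn$, this yields the right adjacent t-structure $(\cn[<0]^{\perp_{\ct}},\cn[>0]^{\perp_{\ct}})$ of $\cn$.

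The compatibility assumption gives integers $\ell,\ell'>0$ with $\cn\subset\cm[-\ell']*\cdots*\cm[\ell]$ and, symmetrically, $\cm\subset\cn[-\ell]*\cdots*\cn[\ell']$. Extension-closure of co-aisles and of perpendicular categories together with Lemma~\ref{no Homs} then yields the sandwich relations
\[
\ct^\cm_{\le -\ell}\subset\ct^\cn_{\le 0}\subset\ct^\cm_{\le \ell'}\quad\text{and}\quad\cm[\ge -\ell']^{\perp_{\ct}}\subset\cn[\ge 0]^{\perp_{\ct}}.
\]
Because $\cm$ has a right adjacent t-structure, so does each shift $\cm[n]$; equivalently, for every integer $n$ the pair $(\ct^\cm_{\le n},\cm[>n]^{\perp_{\ct}})$ is a t-structure on $\ct$.

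I now construct the desired triangle in three stages, gluing them via octahedra. First, applying the $\cm$-t-structure shifted by $\ell'+1$ to $X$ gives $A\to X\to B$ with $A\in\ct^\cm_{\le\ell'+1}$ and $B\in\cm[\ge -\ell']^{\perp_{\ct}}\subset\cn[\ge 0]^{\perp_{\ct}}$, so $B$ is already of the required type. Second, applying the $\cm$-t-structure at shift $-\ell$ to $A$ gives $A_1\to A\to A_2$ with $A_1\in\ct^\cm_{\le -\ell}\subset\ct^\cn_{\le 0}$ and $A_2\in\cm[>\ell]^{\perp_{\ct}}$; a long exact sequence argument using $A\in\ct^\cm_{\le\ell'+1}$ forces $A_2\in\ct^\cm_{\le\ell'+1}$ as well, so that $\Hom_\ct(\cm,A_2[k])=0$ for $k$ outside $[-\ell,\ell'+1]$. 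Combined with $\cn[i]\subset\cm[i-\ell']*\cdots*\cm[i+\ell]$, this yields $\Hom_\ct(\cn[i],A_2)=0$ for $i$ outside a bounded window of integers, i.e.\ $A_2$ has bounded $\cn$-amplitude.

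Third, the crux: decompose $A_2$ as $A_2'\to A_2\to A_2''$ with $A_2'\in\ct^\cn_{\le 0}$ and $A_2''\in\cn[\ge 0]^{\perp_{\ct}}$. Using contravariant finiteness of $\cn$ (and hence of each $\cn[i]$) together with the silting vanishing $\Hom_\ct(\cn,\cn[>0])=0$, I build a finite sequence of right $\cn[i]$-approximations for the finitely many $i\ge 0$ at which $\Hom_\ct(\cn[i],A_2)\ne 0$, assembling $A_2'$ in some $\cn*\cn[1]*\cdots*\cn[n]\subset\ct^\cn_{\le 0}$ and pushing the remainder $A_2''$ into $\bigcap_{i\ge 0}\cn[i]^{\perp_{\ct}}=\cn[\ge 0]^{\perp_{\ct}}$. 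Two applications of the octahedron axiom then glue the three triangles into the required $X_L\to X\to X_R$, with $X_L$ an extension of $A_1$ by $A_2'$ (hence in $\ct^\cn_{\le 0}$) and $X_R$ an extension of $A_2''$ by $B$ (hence in $\cn[\ge 0]^{\perp_{\ct}}$), using extension-closure of each of these subcategories. The main obstacle is the iterative approximation step: the order of the right $\cn[i]$-approximations must be chosen so that earlier approximations are not undone by later ones, which is precisely where the silting vanishing for $\cn$ and the bounded $\cn$-amplitude of $A_2$ obtained from the two $\cm$-truncations are both essential.
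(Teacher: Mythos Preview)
Your overall strategy is sound through the first two stages, and the sandwiching inclusions you state are correct. The gap is in the ``crux'' step, where you claim that a finite sequence of right $\cn[i]$-approximations decomposes $A_2$ into $A_2'\in\ct^{\cn}_{\le0}$ and $A_2''\in\cn[\ge0]^{\perp_\ct}$. Iterating approximations for $0\le i\le m$ (in either order) does give a triangle $A_2'\to A_2\to A_2''$ with $A_2'\in\cn*\cdots*\cn[m]$ and $A_2''\in(\cn*\cdots*\cn[m])^{\perp_\ct}$, but the bounded $\cn$-amplitude of $A_2$ does \emph{not} force $A_2''\in\cn[\ge0]^{\perp_\ct}$. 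From the triangle one has, for $j>m+1$, an isomorphism $\Hom_\ct(\cn[j],A_2'')\cong\Hom_\ct(\cn[j],A_2'[1])$, and since $A_2'[1]\in\cn[1]*\cdots*\cn[m+1]$ this involves groups $\Hom_\ct(\cn,\cn[k])$ with $k<0$, which the silting condition does not kill. In other words, taking cones of approximations pushes the $\cn$-amplitude upward, so the ``finitely many $i\ge0$ at which $\Hom_\ct(\cn[i],A_2)\ne0$'' is a moving target and the process need not terminate. You are essentially assuming on $A_2$ the very decomposition you want to prove for all of $\ct$.

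The paper's proof avoids this trap by inverting the order of operations. It first uses contravariant finiteness of $\cn$ to obtain the torsion pair $(\cx,\cy)=(\cn*\cdots*\cn[\ell],\cx^{\perp_\ct})$ for $\ell\gg0$ and decomposes an arbitrary $Z$ as $X\to Z\to Y$ with $Y\in\cy$; only then does it apply a single $\cm$-truncation to $Y$. The point is that this truncation places $\sigma^{\ge-\ell}Y$ in $\ct^{\ge-\ell}$, so $\Hom_\ct(\cm[j],\sigma^{\ge-\ell}Y)=0$ automatically for \emph{all} $j>\ell$; the remaining finitely many vanishings (for $\cm[j]$ with $n\le j\le\ell$ and for $\cn[j]$ with $0\le j\le n-1$) are then checked by hand using $Y\in\cy$, and the mixed description $\cn[<0]^{\perp_\ct}=\bigcup_{i\ge n}\cn*\cdots*\cn[n-1]*\cm[n]*\cdots*\cm[i]$ converts these into the desired $\sigma^{\ge-\ell}Y\in\cn[\ge0]^{\perp_\ct}$. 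Your approach never produces such an automatic infinite family of vanishings on $A_2''$, and that is the missing ingredient.
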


Since all silting objects in $\ct$ are compatible with each other,  we obtain the following special case.
 
\begin{theorem}\label{l:shift-t-str 1}
Let $\ct$ be a triangulated category satisfying the condition (F) given in Section~\ref{ss:notation}, and $M$ and $N$ silting objects of $\ct$.
Then $M$ has a right (respectively, left) adjacent t-structure if and only if $N$ has a right (respectively, left) adjacent t-structure.
\end{theorem}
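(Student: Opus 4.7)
The plan is to derive Theorem \ref{l:shift-t-str 1} as a direct consequence of the more general Theorem \ref{l:shift-t-str}. To do so, I need to check two hypotheses for the silting subcategories $\cm:=\add M$ and $\cn:=\add N$: that they are contravariantly finite (respectively covariantly finite) in $\ct$, and that they are compatible in the sense defined just before Theorem \ref{l:shift-t-str}.

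The finiteness hypothesis is immediate from condition (F) recalled in Section~\ref{ss:notation}: for any object $X\in\ct$, $\add X$ is functorially finite, hence both contravariantly and covariantly finite. Applied to $M$ and $N$, this supplies whichever of the two finiteness properties is required by the version (right or left) of the theorem being invoked.

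The compatibility hypothesis is where I would use Lemma \ref{vanishing for >>}(b). Applying it to the silting subcategory $\cm$ and the object $N$, I get integers $i,j\in\mathbb{Z}$ with
\[
\Hom_{\ct}(\cm, N[\ge\hspace{-3pt}i])=0 \qquad\text{and}\qquad \Hom_{\ct}(N,\cm[\ge\hspace{-3pt}j])=0.
\]
Since $\cn=\add N$, these equalities propagate to $\Hom_{\ct}(\cm,\cn[\ge\hspace{-3pt}i])=0$ and $\Hom_{\ct}(\cn,\cm[\ge\hspace{-3pt}j])=0$. Choosing $\ell':=\max(i-1,1)$ and $\ell:=\max(j-1,1)$, both positive, a direct rewriting using $\Hom_{\ct}(\cm[-\ell'],\cn[k])=\Hom_{\ct}(\cm,\cn[k+\ell'])$ yields $\Hom_{\ct}(\cm[-\ell'],\cn[>\hspace{-3pt}0])=0$ and $\Hom_{\ct}(\cn,\cm[\ell][>\hspace{-3pt}0])=0$, i.e.\ $\cm[-\ell']\ge\cn\ge\cm[\ell]$, which is exactly the compatibility condition.

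With both hypotheses verified, Theorem \ref{l:shift-t-str} applies and gives the desired equivalence between $M$ having a right (respectively left) adjacent t-structure and $N$ having one. In practice the only real content is Lemma \ref{vanishing for >>}(b); the finiteness check is cosmetic, and no new obstacle arises — the statement is essentially a specialisation of the more general result to the setting of silting \emph{objects} (as opposed to subcategories) in a category with sufficient finiteness, which automatically supplies the two hypotheses that had to be imposed by hand in Theorem \ref{l:shift-t-str}.
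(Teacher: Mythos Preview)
Your proposal is correct and follows the same approach as the paper: the paper simply states that Theorem~\ref{l:shift-t-str 1} is a special case of Theorem~\ref{l:shift-t-str} since ``all silting objects in $\ct$ are compatible with each other,'' and you have spelled out precisely why this is so (via Lemma~\ref{vanishing for >>}(b)) together with the functorial finiteness coming from condition (F).
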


We start the proof of Theorem~\ref{l:shift-t-str} with the following easy observations.

\begin{lemma}\label{basic right and left}
Let $\ct$ be a triangulated category.
\begin{itemize}
\item[(a)] The opposite category $\ct^{\op}$ of $\ct$ has a
natural structure of a triangulated category.
\item[(b)] There is a bijection $\silt\ct\to\silt\ct^{\op}$ given by $\cm\mapsto\cm^{\op}$.
\item[(c)] $\cm$ has a left adjacent t-structure in $\ct$ if and only if $\cm^{\op}$ has a right adjacent t-structure in $\ct^{\op}$.
\end{itemize}
\end{lemma}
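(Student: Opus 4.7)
The plan is to reduce everything to carefully unwinding the conventions for the opposite triangulated category, where the shift is $[1]_{\op}:=[-1]$ and a sequence is distinguished iff its reverse is distinguished in $\ct$. Part (a) is classical (due to Verdier) and I would simply cite it. For part (b), I would compute
\[
\Hom_{\ct^{\op}}(\cm^{\op},\cm^{\op}[i]_{\op})=\Hom_{\ct^{\op}}(\cm^{\op},(\cm[-i])^{\op})=\Hom_{\ct}(\cm[-i],\cm)=\Hom_{\ct}(\cm,\cm[i]),
\]
which immediately shows that $\cm$ is presilting in $\ct$ iff $\cm^{\op}$ is presilting in $\ct^{\op}$. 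Since the operations used to build the thick closure (cones, shifts, direct summands) are each self-dual, $\thick_{\ct^{\op}}\cm^{\op}=(\thick_{\ct}\cm)^{\op}$, so silting also corresponds to silting; the inverse bijection is obtained by applying the same construction to $\ct^{\op}$.

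For part (c), my first step would be to translate the subcategories in the definition of a right adjacent t-structure for $\cm^{\op}$ back into $\ct$. Using $\cm^{\op}[i]_{\op}=(\cm[-i])^{\op}$ and $\Hom_{\ct^{\op}}(A^{\op},B^{\op})=\Hom_{\ct}(B,A)$, a direct calculation gives
\[
\cm^{\op}[<\hspace{-3pt}0]^{\perp_{\ct^{\op}}}={}^{\perp_{\ct}}\cm[>\hspace{-3pt}0]\qquad\text{and}\qquad\cm^{\op}[>\hspace{-3pt}0]^{\perp_{\ct^{\op}}}={}^{\perp_{\ct}}\cm[<\hspace{-3pt}0].
\]
Hence, viewed as a pair of subcategories of $\ct$, the candidate right adjacent pair for $\cm^{\op}$ is the swap $({}^{\perp_{\ct}}\cm[>\hspace{-3pt}0],{}^{\perp_{\ct}}\cm[<\hspace{-3pt}0])$ of the candidate left adjacent pair $({}^{\perp_{\ct}}\cm[<\hspace{-3pt}0],{}^{\perp_{\ct}}\cm[>\hspace{-3pt}0])$ for $\cm$.

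The second step is the self-dual observation that for any subcategories $\UU,\VV$ of $\ct$, the pair $(\UU,\VV)$ is a t-structure on $\ct^{\op}$ if and only if $(\VV,\UU)$ is a t-structure on $\ct$. To see this I would check: reversing the triangle $U\to M\to V'\to U[1]_{\op}$ witnessing the torsion-pair decomposition in $\ct^{\op}$ produces the triangle $V'\to M\to U\to V'[1]$ witnessing the swapped torsion-pair decomposition in $\ct$; the Hom-vanishing axiom carries over via $\Hom_{\ct^{\op}}(A^{\op},B^{\op})=\Hom_{\ct}(B,A)$; and the shift-stability $\VV[-1]_{\op}\subset\VV$, which reads $\VV[1]\subset\VV$ in $\ct$, is equivalent by perpendicularity in the torsion pair $(\VV[1],\UU)$ to $\UU[-1]\subset\UU$ in $\ct$, matching the shift condition for $(\VV,\UU)$ as a t-structure in $\ct$. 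Applying this swap equivalence to the pair $({}^{\perp_{\ct}}\cm[>\hspace{-3pt}0],{}^{\perp_{\ct}}\cm[<\hspace{-3pt}0])$ finishes (c).

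The only point requiring genuine care is tracking the convention $[1]_{\op}=[-1]$ consistently throughout, so that the swap of pairs lands precisely on the candidate left adjacent pair rather than on a shifted version of it; once this is done, the three parts are all routine, and I do not expect an essential obstacle.
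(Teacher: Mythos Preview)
Your proposal is correct; the paper states this lemma as an ``easy observation'' and gives no proof at all, so you are simply spelling out the routine duality bookkeeping that the authors leave to the reader. Your careful tracking of the convention $[1]_{\op}=[-1]$ and the swap $(\UU,\VV)\leftrightarrow(\VV,\UU)$ between t-structures on $\ct^{\op}$ and $\ct$ is exactly what is needed.
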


\begin{proof}[Proof of Theorem \ref{l:shift-t-str}.]
By Lemma \ref{basic right and left}, we only have to prove the statement for right adjacent t-structures.
We will prove the `only if' part, that is, if $\cm$ has a right adjacent t-structure, then $\ct=(\cn[<\hspace{-3pt}0]^{\perp_{\ct}})*(\cn[\ge\hspace{-3pt}0]^{\perp_{\ct}})$ holds.

Applying Lemma~\ref{no Homs} to the silting subcategory $\cn$ of $\ct$ we obtain 
\begin{equation}\label{describe N[<0]perp 1}
\cn[<\hspace{-3pt}0]^{\perp_{\ct}}=\bigcup_{i\ge0}\cn*\cn[1]*\cdots*\cn[i].
\end{equation}
Since $\cm$ and $\cn$ are compatible, we may assume, up to shift, that 
\begin{eqnarray}\label{eq:position-of-N}
\cn&\subset&\cm*\cm[1]*\cdots*\cm[n],\\
\label{M in terms of N}
\cm&\subset&\cn[-n]*\cdots*\cn[-1]*\cn,
\end{eqnarray}
for some integer $n$.
With (\ref{describe N[<0]perp 1}), (\ref{eq:position-of-N}) and (\ref{M in terms of N}), one can easily check that
\begin{equation}\label{describe N[<0]perp 2}
\cn[<\hspace{-3pt}0]^{\perp_{\ct}}=\bigcup_{i\ge n}\cn*\cdots*\cn[n-1]*\cm[n]*\cdots*\cm[i]
\end{equation}
holds. Now fix an integer $\ell\geq 2n-2$ and define subcategories $\cx$ and $\cy$ of $\ct$ by
\[\cx:=\cn*\cn[1]*\cdots*\cn[\ell]
\ \mbox{ and }\ \cy:=\cx^{\perp_\ct}.\]

Since $\cx\subset\cn[<\hspace{-3pt}0]^{\perp_{\ct}}$, it follows from Lemma~\ref{decompose Y} below that
\[\ct=\cx*\cy\subset(\cn[<\hspace{-3pt}0]^{\perp_{\ct}})*(\cn[<\hspace{-3pt}0]^{\perp_{\ct}})*(\cn[\ge\hspace{-3pt}0]^{\perp_{\ct}})=(\cn[<\hspace{-3pt}0]^{\perp_{\ct}})*(\cn[\ge\hspace{-3pt}0]^{\perp_{\ct}}).\]
This completes the proof.
\end{proof}

\begin{lemma}\label{decompose Y} Let $\ell$ be a non-negative integer and define subcategories $\cx$ and $\cy$ of $\ct$ by
\[\cx:=\cn*\cn[1]*\cdots*\cn[\ell]
\ \mbox{ and }\ \cy:=\cx^{\perp_\ct}.\]
\begin{itemize}
\item[(a)] We have $\ct=\cx*\cy$.
\item[(b)] If $\ell\ge 2n-2$, then $\cy\subset(\cn[<\hspace{-3pt}0]^{\perp_{\ct}})*(\cn[\ge\hspace{-3pt}0]^{\perp_{\ct}})$.
\end{itemize}
\end{lemma}
In the case when $\ct$ is Krull--Schmidt, part (a) is \cite[Proposition 2.4]{IyamaYoshino08}.
\begin{proof}
(a) Fix any $T_0\in\ct$. Since $\cn$ is a contravariantly finite subcategory of $\ct$, there exists a triangle
\[\xymatrix{N_i[i]\ar[r]^{f_i}&T_i\ar[r]&T_{i+1}\ar[r]&N_i[i+1]}\]
for each $0\le i\le\ell$ with a right $\cn[i]$-approximation $f_i$ of $T_i$.
Inductively, one can check that $\Hom_{\ct}(\cn[j],T_i)=0$ holds for any $0\le j<i$.
In particular, $T_{\ell+1}\in\cy$ holds.
Now, using $T_i\in\cn[i]*T_{i+1}$ repeatedly, we have
\[T_0\in\cn*T_1\subset\cn*\cn[1]*T_2\subset\cdots\subset\cn*\cn[1]*\cdots*\cn[\ell]*T_{\ell+1}\subset\cx*\cy\]
as desired.

(b) For any $Y\in\cy$, we take the triangle associated to the t-structure $(\ct^{\le-\ell-1},\ct^{\ge-\ell-1}):=(\cm[<\hspace{-3pt}0]^{\perp_{\ct}}[\ell+1],\cm[>\hspace{-3pt}0]^{\perp_{\ct}}[\ell+1])$
\begin{equation}\label{1-2d and 2-2d}
\xymatrix{\sigma^{\leq-\ell-1}Y\ar[r]&Y\ar[r]&\sigma^{\geq -\ell}Y\ar[r]&(\sigma^{\leq-\ell-1}Y)[1].}
\end{equation}
It suffices to show $\sigma^{\leq-\ell-1}Y\in \cn[<\hspace{-3pt}0]^{\perp_{\ct}}$ and $\sigma^{\geq-\ell}Y\in \cn[\ge\hspace{-3pt}0]^{\perp_{\ct}}$.

We have that $\sigma^{\leq-\ell-1}Y$ belongs to $\ct^{\leq-\ell-1}$, which, by \eqref{describe N[<0]perp 2}, is a subcategory of $\cn[<\hspace{-3pt}0]^{\perp_{\ct}}$. The first assertion follows.

To prove the second assertion, we need to show  $\Hom_\ct(\cn[<\hspace{-3pt}0]^{\perp_{\ct}},\sigma^{\geq -\ell}Y)=0$. By \eqref{describe N[<0]perp 2}, it suffices to show the following 
since $n-1\le\ell-n+1$: 
\begin{itemize}
\item[(i)] $\Hom_{\ct}(\cm[i],\sigma^{\geq-\ell}Y)=0$ for any $i$ with $\ell+1\le i$;
\item[(ii)] $\Hom_{\ct}(\cm[i],\sigma^{\geq-\ell}Y)=0$ for any $i$ with $n\le i\le\ell$;
\item[(iii)] $\Hom_{\ct}(\cn[i],\sigma^{\geq-\ell}Y)=0$ for any $i$ with $0\le i\le\ell-n+1$.
\end{itemize}

The statement (i) holds since $\sigma^{\geq-\ell}Y\in\ct^{\geq-\ell}$.

We show (ii). Since $(\sigma^{\leq-\ell-1}Y)[1]\in\ct^{\leq-\ell-2}$, we have $\Hom_\ct(\cm[i],(\sigma^{\leq-\ell-1}Y)[1])=0$ for any $i\le\ell+1$.
Since $Y\in\cy$ and $\cm[i]\in\cx=\cn*\cn[1]*\cdots*\cn[\ell]$ for any $n\le i\le\ell$ by \eqref{M in terms of N}, we have $\Hom_{\ct}(\cm[i],Y)=0$ for any $n\le i\le\ell$.
Thus the statement follows from the triangle \eqref{1-2d and 2-2d}.

We show (iii). Since $Y\in\cy$, we have $\Hom_{\ct}(\cn[i],Y)=0$ for any $0\le i\le\ell$.
Since $(\sigma^{\leq-\ell-1}Y)[1]\in\ct^{\leq-\ell-2}=\ct_{\geq-\ell-1}{}^{\perp_\ct}$ and $\cn\subset\ct_{\geq -n}$, we have $\Hom_{\ct}(\cn[i],(\sigma^{\leq-\ell-1}Y)[1])=0$ for any $0\le i\le \ell-n+1$. The statement follows from the triangle \eqref{1-2d and 2-2d}.
\end{proof}

\subsection{Hearts of adjacent t-structures}

In this subsection, we describe the heart of a t-structure right adjacent to a silting subcategory. 
We first prepare some notions.
For an additive category $\cm$, an \emph{$\cm$-module} is a contravariant additive functor from $\cm$ to the category of abelian groups.
We say that an $\cm$-module $F$ is \emph{finitely presented} if there
exist a sequence of natural transformations
\[\xymatrix{\Hom_{\cm}(-,M')\ar[r]&\Hom_{\cm}(-,M)\ar[r]& F\ar[r]&0}\]
with $M,M'\in\cm$ which is objectwise exact. We denote by $\mod\cm$ the category of 
finitely presented $\cm$-module.
Although $\mod\cm$ is in general not an abelian category, we have the following sufficient condition.

\begin{lemma}
Let $\ct$ be a triangulated category and $\cm$ a contravariantly (respectively, covariantly) finite subcategory of $\ct$. Then $\mod\cm$ (respectively, $\mod\cm^{\op}$) forms an abelian category.
\end{lemma}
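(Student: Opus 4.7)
The plan is to reduce the statement to the classical criterion due to Freyd/Auslander: for an additive category $\cm$, the category $\mod\cm$ of finitely presented $\cm$-modules is abelian if and only if $\cm$ admits \emph{weak kernels}, i.e.\ for every morphism $f:M\to M'$ in $\cm$ there exists $g:M''\to M$ in $\cm$ with $fg=0$ such that every $h:N\to M$ in $\cm$ with $fh=0$ factors (not necessarily uniquely) through $g$. Dually, $\mod\cm^{\op}$ is abelian iff $\cm$ has weak cokernels. So it suffices to show that a contravariantly finite subcategory $\cm$ of a triangulated category $\ct$ admits weak kernels; the statement about $\mod\cm^{\op}$ will follow by passing to $\ct^{\op}$ via Lemma~\ref{basic right and left}(a), which turns covariant finiteness into contravariant finiteness.

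For the construction of weak kernels I would proceed as follows. Given a morphism $f:M\to M'$ in $\cm$, regard it as a morphism in $\ct$ and extend it to a triangle
\[
L\xrightarrow{\ g'\ } M\xrightarrow{\ f\ } M'\longrightarrow L[1].
\]
Since $\cm$ is contravariantly finite in $\ct$, the object $L\in\ct$ admits a right $\cm$-approximation $g:M''\to L$ with $M''\in\cm$. I claim that the composition $g'g:M''\to M$ is a weak kernel of $f$ in $\cm$. The relation $f(g'g)=(fg')g=0$ is immediate from the triangle. For the factorisation property, suppose $h:N\to M$ is a morphism in $\cm$ with $fh=0$; then by the long exact sequence applied to the above triangle, $h$ factors as $h=g'k$ for some $k:N\to L$ in $\ct$. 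Since $g$ is a right $\cm$-approximation and $N\in\cm$, there exists $s:N\to M''$ with $k=gs$, giving $h=g'gs$ as required.

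The dual case is handled by observing that the opposite category $\ct^{\op}$ is again triangulated and that $\cm\subset\ct$ is covariantly finite if and only if $\cm^{\op}\subset\ct^{\op}$ is contravariantly finite; applying the previous case to $\cm^{\op}$ shows $\cm^{\op}$ has weak kernels, hence $\mod\cm^{\op}$ is abelian. There is no real obstacle: the only non-formal ingredient is the passage from the contravariant finiteness of $\cm$ to the existence of weak kernels, and this step is essentially forced by the triangulated structure (the fibre of $f$ in $\ct$ serves as a candidate kernel, while the approximation property pulls it back into $\cm$). The remaining verification is the standard equivalence between the existence of weak kernels and exactness of cokernels in $\mod\cm$, which I would simply quote.
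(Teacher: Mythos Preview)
Your proof is correct and follows exactly the paper's approach: the paper simply asserts that the assumptions imply $\cm$ has pseudokernels (your ``weak kernels'') and then cites Auslander's general result, while you spell out the construction of the pseudokernel via the triangle and the right $\cm$-approximation. The only difference is level of detail, not strategy.
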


\begin{proof}
Our assumptions imply that any morphism $f:M\to N$ in $\cm$ has a pseudokernel, that is, a morphism $g:M'\to M$ such that the sequence $\Hom_\cm(-,M')\stackrel{g\cdot}{\longrightarrow}\Hom_\cm(-,M)\stackrel{f\cdot}{\longrightarrow}\Hom_\cm(-,N)$ is exact.
Thus the assertion follows from the general result \cite[Chapter III, Section 2, the second Proposition]{Auslander71}.
\end{proof}

Now we have the following description of the heart of a t-structure right adjacent to a silting subcategory (compare: \cite[Theorem 1.3(c)]{HKM}, \cite[Chapter IV, Theorem 3.4]{BeligiannisReiten07} and \cite[Corollary 4.7]{PsaroudakisVitoria14}).

\begin{proposition}\label{l:heart-of-the-adjacent-t-structure}
Let $\ct$ be a triangulated category. 
\begin{itemize}
\item[(a)] If $\cm$ is a silting subcategory of $\ct$ and admits a right adjacent t-strucutre $(\cm[<\hspace{-3pt}0]^{\perp_{\ct}},\cm[>\hspace{-3pt}0]^{\perp_{\ct}})$, then the functor  $\Hom_{\ct}(\cm,-)\colon\ct\rightarrow\mod\cm$ restricts to an equivalence from the heart $\ch$ to $\mod\cm$.
\item[(b)] If $\cm$ is a silting subcategory of $\ct$ and admits a left adjacent t-structure $({}^{\perp_{\ct}}\cm[<\hspace{-3pt}0],{}^{\perp_{\ct}}\cm[>\hspace{-3pt}0])$, then the functor  $\Hom_{\ct}(-,\cm)\colon\ct\rightarrow\mod\cm^{\op}$ restricts to an anti-equivalence from the heart $\ch$ to $\mod\cm^{\op}$.
\end{itemize}
\end{proposition}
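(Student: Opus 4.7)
The plan is to focus on part (a); part (b) will then follow by applying (a) to $\ct^{\op}$ via Lemma~\ref{basic right and left}. Write $(\ct^{\leq 0},\ct^{\geq 0}):=(\cm[<0]^{\perp_\ct},\cm[>0]^{\perp_\ct})$ and let $\sigma^{\leq n},\sigma^{\geq n}$ and $H^0:=\sigma^{\leq 0}\sigma^{\geq 0}$ be the associated truncation and cohomology functors. Since Lemma~\ref{no Homs} gives $\ct^{\leq 0}=\ct_{\leq 0}$, we have $\cm\subset\ct^{\leq 0}$, so for each $M\in\cm$ the truncation triangle
\[
\sigma^{\leq -1}M\to M\xrightarrow{\eta_M}\pi M\to\sigma^{\leq -1}M[1]
\]
places $\pi M:=H^0 M$ in $\ch$ functorially. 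My strategy is to show that $\pi\colon\cm\to\ch$ is fully faithful, that each $\pi M$ is projective in $\ch$, and that every object of $\ch$ admits a two-term $\pi\cm$-presentation, and then conclude via the standard equivalence between an abelian category with a projective generator admitting two-term presentations and the associated module category.

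First, I establish two key isomorphisms. For $M\in\cm$ and $H\in\ch$, the map $\eta_M^*\colon\Hom_\ct(\pi M,H)\to\Hom_\ct(M,H)$ is an isomorphism, obtained from the triangle via $\Hom_\ct(\ct^{\leq -1},\ct^{\geq 0})=0$. For $M,M'\in\cm$, the map $(\eta_{M'})_*\colon\Hom_\ct(M,M')\to\Hom_\ct(M,\pi M')$ is an isomorphism; this comes from applying $\Hom_\ct(M,-)$ to the triangle for $M'$ together with the co-t-structure vanishing $\Hom_\ct(\ct_{\geq 0},\ct_{\leq -1})=0$, applicable since $\cm\subset\ct_{\geq 0}$ and $\ct^{\leq -1}=\ct_{\leq -1}$ by Lemma~\ref{no Homs}. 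Combining them yields $\Hom_\ch(\pi M,\pi M')\cong\Hom_\cm(M,M')$, so $\pi$ is fully faithful. Projectivity of $\pi M$ in $\ch$ now follows: any short exact sequence $0\to H_1\to H_2\to H_3\to 0$ in $\ch$ lifts to a triangle in $\ct$, and applying $\Hom_\ct(M,-)$ the obstruction to right-exactness is $\Hom_\ct(M,H_1[1])=\Hom_\ct(M[-1],H_1)$, which vanishes because $M[-1]\in\cm[<0]$ and $H_1\in\ch\subset\cm[<0]^{\perp_\ct}$.

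The central step is the existence of two-term presentations. For $H\in\ch$, take a right $\cm$-approximation $f\colon M_0\to H$ (which exists by Proposition~\ref{p:adjacent-->finiteness}). Since $H\in\ct^{\geq 0}$, $f$ factors uniquely as $f=\phi\,\eta_{M_0}$ through $\pi M_0$, giving $\phi\colon\pi M_0\to H$ in $\ch$. Let $C$ denote the cokernel of $\phi$ in $\ch$. For any $M'\in\cm$, applying $\Hom_\ch(\pi M',-)$ to the exact sequence $\pi M_0\xrightarrow{\phi}H\to C\to 0$ and translating via the two isomorphisms above yields
\[
\Hom_\ct(M',M_0)\xrightarrow{f_*}\Hom_\ct(M',H)\to\Hom_\ct(M',C)\to 0.
\]
The first map is surjective because $f$ is a right $\cm$-approximation; hence $\Hom_\ct(\cm,C)=0$. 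Combined with $C\in\ch\subset\cm[<0]^{\perp_\ct}\cap\cm[>0]^{\perp_\ct}$, this yields $\Hom_\ct(\cm[i],C)=0$ for every $i\in\Z$. Since $\thick\cm=\ct$, we conclude $C=0$, so $\phi$ is surjective in $\ch$. Iterating the same argument on $\ker\phi$ (again in $\ch$) produces the desired two-term presentation $\pi M_1\to\pi M_0\to H\to 0$.

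To conclude, the two-term presentation together with the two isomorphisms shows that $\Hom_\ct(\cm,-)|_\ch$ lands in $\mod\cm$ and reduces both essential surjectivity and fully faithfulness to routine module-theoretic computations: essential surjectivity is obtained by taking the cokernel in $\ch$ of the morphism in $\pi\cm$ corresponding to a given presentation in $\mod\cm$, and fully faithfulness follows from comparing the kernels of the left-exact $\Hom$-sequences on both sides. The main obstacle is the vanishing of $C$ in the central step, where the approximation property, the identification $\cm[<0]^{\perp_\ct}=\ct_{\leq 0}$ of Lemma~\ref{no Homs}, and the silting generation $\thick\cm=\ct$ all have to be combined in a single argument.
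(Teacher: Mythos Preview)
Your proof is correct and follows essentially the same route as the paper's: both introduce the truncation $\pi M=H^0M$, verify that $\pi\colon\cm\to\ch$ is fully faithful and lands in projectives, and use a right $\cm$-approximation of $H\in\ch$ to produce projective covers, reducing the statement to Morita theory. The one noteworthy variation is in the surjectivity step: the paper completes the approximation $M^X\to X$ to a triangle $N^X\to M^X\to X\to N^X[1]$, checks directly that $N^X\in\ct^{\leq 0}$, and applies $H^0$ to obtain $(M^X)^0\twoheadrightarrow X$; you instead argue that the cokernel $C$ of $\phi$ in $\ch$ satisfies $\Hom_\ct(\cm[i],C)=0$ for all $i$ and invoke $\thick\cm=\ct$ to force $C=0$. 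Both arguments are valid and of comparable length; the paper's cone computation is perhaps a touch more direct since it avoids the detour through the heart and the silting generation hypothesis.
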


\begin{proof}
We only prove (a) since (b) follows dually. Let $(\ct^{\le0},\ct^{\ge0}):=(\cm[<\hspace{-3pt}0]^{\perp_{\ct}},\cm[>\hspace{-3pt}0]^{\perp_{\ct}})$. 
For any $M\in\cm$, consider the triangle 
\[
\xymatrix{
M^{\leq -1}\ar[r] & M\ar[r] & M^0 \ar[r] & M^{\leq -1}[1] & \text{($M^{\leq -1}\in \ct^{\leq -1}$ and $M^0\in\ch$).}}
\]
Let $\cm^0:=\{M^0\mid M\in\cm\}$. Then a direct diagram chase shows that the functor $(-)^0\colon\cm\to\cm^0$ is an equivalence. 
We have $\Hom(M^{\leq -1},\ch)=0$, and hence we have a commutative diagram
\[\xymatrix{
\ch\ar[d]_{\Hom_{\ct}(\cm^0,-)}\ar[drr]^{\Hom_{\ct}(\cm,-)}\\
\mod\cm^0\ar[rr]_{(-)^0}^{\simeq}&&\mod\cm.}\]
So, by Morita's theorem, it suffices to show that objects of $\cm^0$ form a class of projective generators of $\ch$.

Let $M\in\cm$. For any $X\in\ch$, applying $\Hom_\ct(-,X)$ to the triangle associated to $M$ as in the beginning of the proof, we obtain an exact sequence 
\[
\xymatrix@C=1pc{0=\Hom_\ct(M^{\leq -1},X)\ar[r] & \Hom_\ct(M^0,X[1])\ar[r] & \Hom_\ct(M,X[1])=0.}
\]
Thus $\Ext^1_\ch(M^0,X)\simeq\Hom_\ct(M^0,X[1])=0$. This shows that $M^0$ is projective in $\ch$, so objects of $\cm^0$ are projective in $\ch$.

For $X\in\ch$, take a right $\cm$-approximation $M^X\rightarrow X$ and form a triangle
\[
\xymatrix{
N^X\ar[r] & M^X\ar[r] & X\ar[r] & N^X[1].
}
\]
Applying $\Hom_\ct(\cm,-)$ to this triangle, we obtain long exact sequences
\[
\xymatrix@C=0.6pc{
\Hom_\ct(\cm,M^X[i-1])\ar[r] & \Hom_\ct(\cm,X[i-1])\ar[r] & \Hom_\ct(\cm,N^X[i])\ar[r] & \Hom_\ct(\cm,M^X[i]).
}
\]
We claim that $\Hom_\ct(\cm,N^X[i])=0$ for $i\geq 1$, hence $N^X\in\ct^{\leq 0}$. Indeed, $\Hom_\ct(\cm,M^X[i])$ vanishes for all $i\geq 1$. If $i=1$, then the left morphism is surjective; if $i>1$, then $\Hom_\ct(\cm,X[i-1])=0$. The claim follows immediately.  Now taking the 0th cohomology associated to the t-structure $(\ct^{\leq 0},\ct^{\geq 0})$, we obtain an exact sequence in $\ch$
\[
\xymatrix@R=0.5pc{
H^0(M^X)\ar[r]\ar@{=}[d] & H^0(X)\ar[r]\ar@{=}[d] & H^0(N^X[1]),\ar@{=}[d]\\
(M^X)^0& X & 0
}
\]
showing that $\cm^0$ consists of a class of projective generators of $\ch$.
\end{proof}

\subsection{Right and left adjacent t-structures}

In this subsection, under certain assumptions, we show that a silting subcategory has a right adjacent t-structure if and only if it has a left adjacent t-structures.

Let $k$ be a field, and let $D=\Hom_k(-,k)$ denote the $k$-dual.
We consider the following conditions:
\begin{itemize}
\item[(RS1)] $\ct$ is a $k$-linear Hom-finite triangulated category and $\ct^{\fd}$ is a thick subcategory of $\ct$.
\item[(RS2)] $\ct^{\fd}$ has an auto-equivalence $S$ such that a \emph{relative Serre duality} holds in the sense that there exists a functorial isomorphism for any $X\in\ct^{\fd}$ and $Y\in\ct$
$$D\Hom_{\ct}(X,Y)\simeq \Hom_{\ct}(Y,SX).$$
\end{itemize}
In this case, we prove the following.

\begin{theorem}\label{left-right 1} Under the assumptions (RS1) and (RS2), let $M$ be a silting object of $\ct$.
The following conditions are equivalent.
\begin{itemize}
\item[(a)] $M$ has a right adjacent t-structure $(M[<\hspace{-3pt}0]^{\perp_{\ct}},M[>\hspace{-3pt}0]^{\perp_{\ct}})$ with $M[>\hspace{-3pt}0]^{\perp_{\ct}}\subset\ct^{\fd}$.
\item[(b)] $M$ has a left adjacent t-structure
$({}^{\perp_{\ct}} M[<\hspace{-3pt}0],{}^{\perp_{\ct}} M[>\hspace{-3pt}0])$ with ${}^{\perp_{\ct}} M[<\hspace{-3pt}0]\subset\ct^{\fd}$.
\end{itemize}
In this case, we have $S({}^{\perp_{\ct}} M[<\hspace{-3pt}0])\subset M[<\hspace{-3pt}0]^{\perp_{\ct}}$ and ${}^{\perp_{\ct}} M[>\hspace{-3pt}0]\supset S^{-1}(M[>\hspace{-3pt}0]^{\perp_{\ct}})$, and $S$ restricts to an equivalence $S\colon{}^{\perp_{\ct}} M[<\hspace{-3pt}0]\cap {}^{\perp_{\ct}} M[>\hspace{-3pt}0]\to M[<\hspace{-3pt}0]^{\perp_{\ct}}\cap M[>\hspace{-3pt}0]^{\perp_{\ct}}$ of hearts.
\end{theorem}

In fact we will prove a more general result for silting subcategories.
Let $\cm$ be a $k$-linear Hom-finite additive category. Then any $\cm$-module $F$ can be naturally regarded as a contravariant $k$-linear functor $\cm\to\Mod k$. We define an $\cm^{\op}$-module $DF$ as the composition
\[DF:=(\cm\xrightarrow{F}\Mod k\xrightarrow{D}\Mod k).\]
We say that $\cm$ is a \emph{dualizing $k$-variety} \cite{AuslanderReiten74} if the following conditions are satisfied.
\begin{itemize}
\item For any $F\in\mod\cm$, the functor $DF$ belongs to $\mod\cm^{\op}$.
\item For any $F\in\mod\cm^{\op}$, the functor $DF$ belongs to $\mod\cm$.
\end{itemize}
In this case, we have anti-equivalences $D:\mod\cm\leftrightarrow\mod\cm^{\op}$, and $\mod\cm$ has enough projective objects $\proj\cm$ and injective objects $\inj\cm$.
We have an equivalence
\[\nu:\proj\cm\xrightarrow{\simeq}\inj\cm\ \mbox{ given by }\ \nu(\Hom_{\cm}(-,M)):=D\Hom_{\cm}(M,-)\]
for $M\in\cm$, which we call the \emph{Nakayama functor}.

Since any $k$-linear Hom-finite category which has an additive generator is a dualizing $k$-variety, Theorem \ref{left-right 1} follows from the following result.

\begin{theorem}\label{left-right} Under the assumptions (RS1) and (RS2), let $\cm$ be a silting subcategory of $\ct$ and assume that $\cm$ is a dualizing $k$-variety.
Then the following conditions are equivalent.
\begin{itemize}
\item[(a)] $\cm$ has a right adjacent t-structure $(\cm[<\hspace{-3pt}0]^{\perp_{\ct}},\cm[>\hspace{-3pt}0]^{\perp_{\ct}})$ with $\cm[>\hspace{-3pt}0]^{\perp_{\ct}}\subset\ct^{\fd}$.
\item[(b)] $\cm$ has a left adjacent t-structure
$({}^{\perp_{\ct}} \cm[<\hspace{-3pt}0],{}^{\perp_{\ct}} \cm[>\hspace{-3pt}0])$ with ${}^{\perp_{\ct}} \cm[<\hspace{-3pt}0]\subset\ct^{\fd}$.
\end{itemize}
In this case, we have $S({}^{\perp_{\ct}} \cm[<\hspace{-3pt}0])\subset\cm[<\hspace{-3pt}0]^{\perp_{\ct}}$ and
${}^{\perp_{\ct}} \cm[>\hspace{-3pt}0]\supset S^{-1}(\cm[>\hspace{-3pt}0]^{\perp_{\ct}})$,  and $S$ restricts to an equivalence $S\colon{}^{\perp_{\ct}} \cm[<\hspace{-3pt}0]\cap {}^{\perp_{\ct}} \cm[>\hspace{-3pt}0]\to\cm[<\hspace{-3pt}0]^{\perp_{\ct}}\cap\cm[>\hspace{-3pt}0]^{\perp_{\ct}}$ of hearts; moreover, $\cm$ is a functorially finite subcategory of $\ct$.
\end{theorem}

Before proving Theorem~\ref{left-right}, we give the following characterization of the subcategory $\ct^{\fd}$ of $\ct$, which justifies the notation.

\begin{lemma}\label{l:fd-has-fd-cohomology}Let $\cm$ be a silting subcategory of $\ct$ and let $X$ be an object of $\ct$. Consider the following conditions:
\begin{itemize}
\item[(a)] $X$ belongs to $\ct^{\fd}$;
\item[(b)] the space $\Hom_{\ct}(\cm,X[i])$ vanishes for almost all $i\in\Z$;
\item[(c)] the space $\Hom_{\ct}(X[i],\cm)$ vanishes for almost all $i\in\mathbb{Z}$.
\end{itemize}
Then {\rm(a)} implies {\rm(b)} and {\rm(c)}. If $\cm[>\hspace{-3pt}0]^{\perp_{\ct}}\subset\ct^{\fd}$, then {\rm(a)} and {\rm(b)} are equivalent; if ${}^{\perp_{\ct}} \cm[<\hspace{-3pt}0]\subset\ct^{\fd}$, then {\rm(a)} and {\rm(c)} are equivalent.
\end{lemma}

\begin{proof}
(a)$\Rightarrow$(b): Let $X\in\ct^{\fd}$. Then we have $\Hom_\ct(\cm,X[i])=0$ and $\Hom_\ct(\cm,X[-i])\simeq D\Hom_\ct(S^{-1}X,\cm[i])=0$ for $i\gg0$ by Lemma \ref{vanishing for >>}.

(b)$\Rightarrow$(a): Assume $\cm[>\hspace{-3pt}0]^{\perp_{\ct}}\subset\ct^{\fd}$. If (b) holds, then there exists an integer $i$ such that $\Hom_\ct(\cm,X[<\hspace{-3pt}i])=0$, \emph{i.e.} $X\in\cm[>\hspace{-3pt}-i]^{\perp_\ct}$. Since $\cm[>\hspace{-3pt}-i]^{\perp_\ct}=\cm[>\hspace{-3pt}0]^{\perp_\ct}[-i]$  is contained in $\ct^{\fd}$, it follows that $X$ belongs to $\ct^{\fd}$ . 

(a)$\Rightarrow$(c) and (c)$\Rightarrow$(a): Dual to (a)$\Rightarrow$(b) and (b)$\Rightarrow$(a) respectively.
\end{proof}

The ``moreover" part of Theorem~\ref{left-right} is a consequence of Proposition~\ref{p:adjacent-->finiteness}.
In the rest of this section, we prove that (a) implies (b). Then the converse follows by Lemma \ref{basic right and left}.
Let $(\ct_{\ge0},\ct_{\le0})$ be the co-t-structure associated to $\cm$.
We denote by $\ch$ the heart of the t-structure $(\ct^{\le0},\ct^{\ge0}):=(\cm[<\hspace{-3pt}0]^{\perp_{\ct}},\cm[>\hspace{-3pt}0]^{\perp_{\ct}})$.
We denote by $\sigma^{\le i}$ and $\sigma^{\ge i+1}$ the  truncation functors associated with the t-structures $(\ct^{\leq i},\ct^{\geq i}):=(\ct^{\leq 0}[-i],\ct^{\geq 0}[-i])$. Then, for any $X\in\ct^{\le0}=\ct_{\le0}$, there exists a triangle
\[\xymatrix{L[-1]\ar[r]& Y\ar[r]& X\ar[r]& L}\]
in $\ct$ with $L=\sigma^{\ge0}X\in\ch$ and $Y=\sigma^{\le-1}X\in\ct^{\le-1}=\ct_{\le-1}$.

The following dual statement is a crucial step to prove that $({}^{\perp_{\ct}} \cm[<\hspace{-3pt}0],{}^{\perp_{\ct}} \cm[>\hspace{-3pt}0])$ forms a t-structure on $\ct$.
It was inspired by the result \cite[Lemma 2.9]{Guolingyan11a} of Guo.

\begin{proposition}\label{one step}
For any $X\in\ct_{\ge0}$, there exists a triangle
\[\xymatrix{S^{-1}(L)\ar[r]& X\ar[r]& Y\ar[r]& S^{-1}(L)[1]}\]
in $\ct$ with $L\in\ch$ and $Y\in\ct_{\ge1}$.
In particular, we have $\ct_{\ge0}=S^{-1}(\ch)*\ct_{\ge1}$.
\end{proposition}

\begin{proof}
It suffices to prove the first assertion. In fact, it implies $\ct_{\ge0}\subset S^{-1}(\ch)*\ct_{\ge1}$.
Then the equality holds since $S^{-1}(\ch)\subset{}^{\perp_{\ct}}\cm[>\hspace{-3pt}0]=\ct_{\ge0}$ holds by the relative Serre duality.

Fix $X\in\ct_{\ge0}$.  Take a triangle
\begin{equation}\label{co-t-structure for X}
\xymatrix{X_{\ge 2}\ar[r]&X\ar[r]&W[-1]\ar[r]&X_{\ge 2}[1]}
\end{equation}
with $X_{\ge 2}\in\ct_{\ge 2}$ and $W\in\cm*\cm[1]$.
Then there exists a triangle
\begin{equation}\label{co-t-structure for X2}
\xymatrix{M_1\ar[r]^f&M_0\ar[r]&W\ar[r]&M_1[1]}
\end{equation}
with $M_0,M_1\in\cm$. By Proposition \ref{l:heart-of-the-adjacent-t-structure}, the functor $F:=\Hom_\ct(\cm,-)\colon\ct\to\mod\cm$ induces an equivalence
\[F\colon\ch\xrightarrow{\simeq}\mod\cm.\]
Since $\cm$ is a dualizing $k$-variety by our assumption, we have the Nakayama functor $\nu\colon\proj\cm\xrightarrow{\simeq}\inj\cm$.
We define $L\in\ch$ by the exact sequence in $\mod\cm$:
\begin{equation}\label{define L}
\xymatrix{0\ar[r]&F(L)\ar[r]&\nu F(M_1)\ar[r]^{\nu F(f)}&\nu F(M_0)}.
\end{equation}
(This means that $F(L)$ is the Auslander--Reiten translation of $F(W)$ unless $W$ has direct summands in $\cm[1]$.) To continue the proof we need the following lemma.

\begin{lemma}\label{define g}
There exists a morphism $g\in\Hom_\ct(S^{-1}(L),X)$ which induces a functorial isomorphism for $U\in\ct^{\le0}$:
\[\Hom_\ct(g,U)\colon\Hom_\ct(X,U)\xrightarrow{\simeq}\Hom_\ct(S^{-1}(L),U).\]
\end{lemma}

\begin{proof}
We first show that there are the following functorial isomorphisms:
\begin{itemize}
\item[(i)] $\Hom_\ct(X,U)\simeq\Hom_\ct(W[-1],U)$;
\item[(ii)] $\Hom_\ct(W[-1],U)\simeq D\Hom_\cm(F(U),F(L))$;
\item[(iii)] $D\Hom_\cm(F(U),F(L))\simeq\Hom_\ct(S^{-1}(L),U)$.
\end{itemize}

By the triangle \eqref{co-t-structure for X}, we have an exact sequence
\[
\xymatrix@C=0.7pc{\Hom_\ct(X_{\ge2}[1],-)\ar[r]&\Hom_\ct(W[-1],-)\ar[r]&\Hom_\ct(X,-)\ar[r]&\Hom_\ct(X_{\ge2},-).}
\]
Evaluated at $U$, this gives the functorial isomorphism (i), since  $\Hom_\ct(X_{\ge 2}[\le\hspace{-3pt} 1],U)=0$.

The triangle \eqref{co-t-structure for X2} and the exact sequence \eqref{define L} yield a  commutative diagram with exact rows:
\[
\xymatrix{
0\ar[r]&D\Hom_\ct(W,U[1])\ar[r]&
D\Hom_\ct(M_1,U)\ar[r]^{D(\cdot f)}\ar[d]&D\Hom_\ct(M_0,U)\ar[d]\\
0\ar[r]&\Hom_\cm(F(U),F(L))\ar[r]&\Hom_\cm(F(U),\nu F(M_1))\ar[r]^{\nu F(f)\cdot}&\Hom_\cm(F(U),\nu F(M_0)).
}
\]
Here we used the vanishing of $D\Hom_\ct(M_0,U[1])$. The vertical arrows are the functorial isomorphism for $M\in\cm$
\[
\Hom_\ct(M,U)\simeq\Hom_\cm(F(M),F(U))\simeq D\Hom_\cm(F(U),\nu F(M)).
\]
As a consequence, the diagram gives us the functorial isomorphism (ii).

Since $\sigma^{\ge0}U\in\ch$ and $F(U)\simeq F(\sigma^{\ge0}U)$, we have
functorial isomorphisms
\[\Hom_\cm(F(U),F(L))\simeq\Hom_\cm(F(\sigma^{\ge0}U),F(L))
\simeq\Hom_\ct(\sigma^{\ge0}U,L)\simeq\Hom_\ct(U,L).\]
Using the relative Serre duality, we obtain the functorial isomorphism (iii).

Composing (i), (ii) and (iii), we have a functorial isomorphism $$\Hom_\ct(X,U)\simeq\Hom_\ct(S^{-1}(L),U)$$ for $U\in\ct^{\le0}$.
Using the relative Serre duality, we have a functorial isomorphism $$\Hom_{\ct}(-,S^{-1}(L))\simeq\Hom_\ct(-,X)$$ on $S^{-1}(\ct^{\le0}\cap\ct^{\fd})$. 
This is induced by a morphism $g\in\Hom_\ct(S^{-1}(L),X)$
by Yoneda's Lemma since $S^{-1}(L)$ belongs to $S^{-1}(\ct^{\le0}\cap\ct^{\fd})$.
\end{proof}

Now we continue the proof of Proposition~\ref{one step}. We extend the morphism $g$ given in Lemma \ref{define g} to a triangle
\begin{equation}\label{construct Y}
\xymatrix{Y[-1]\ar[r]&S^{-1}(L)\ar[r]^(0.6)g&X\ar[r]&Y.}
\end{equation}
It suffices to prove $Y\in\ct_{\ge1}$, that is, $\Hom_{\ct}(Y,\cm[\ge\hspace{-3pt}0])=0$.
Since
$\Hom_\ct(X,\cm[\ge\hspace{-3pt} 1])=0$ and
$\Hom_\ct(S^{-1}(L),\cm[\neq\hspace{-3pt} 0])=D\Hom_\ct(\cm,L[\neq\hspace{-3pt}0])=0$ by $L\in\ch$,
it follows that $\Hom_\ct(Y,\cm[\ge\hspace{-3pt}2])=0$. Moreover, we have an exact sequence
\begin{eqnarray*}
0=\Hom_\ct(S^{-1}(L),\cm[-1])&\to&\Hom_\ct(Y,\cm)\to
\Hom_\ct(X,\cm)\xrightarrow{\cdot g}\Hom_\ct(S^{-1}(L),\cm)\\
&\to&\Hom_\ct(Y,\cm[1])\to\Hom_\ct(X,\cm[1])=0.
\end{eqnarray*}
By Lemma \ref{define g}, the map $g$ is bijective, and hence $\Hom_\ct(Y,\cm)=0=\Hom_\ct(Y,\cm[1])$.
So $Y\in\ct_{\ge 1}$ and the proof is complete.
\end{proof}

Now we are ready to prove Theorem~\ref{left-right}. 

\begin{proof}[Proof of Theorem~\ref{left-right}.]
We only show that (a) implies (b). The converse follows by Lemma \ref{basic right and left}.

Since ${}^{\perp_{\ct}} \cm[\le\hspace{-3pt}0]={}^{\perp_{\ct}}\ct_{\ge0}$ and ${}^{\perp_{\ct}} \cm[>\hspace{-3pt}0]=\ct_{\ge0}$ hold,
$\Hom_{\ct}({}^{\perp_{\ct}} \cm[\le\hspace{-3pt}0],{}^{\perp_{\ct}} \cm[>\hspace{-3pt}0])=0$ holds.
To prove that $({}^{\perp_{\ct}} \cm[<\hspace{-3pt}0],{}^{\perp_{\ct}} \cm[>\hspace{-3pt}0])$ is a t-structure, it is enought to show $\ct=({}^{\perp_{\ct}}\ct_{\ge0})*\ct_{\ge0}$.
Since $\ct=\bigcup_{\ell\ge0}\ct_{\ge-\ell}$, it is enough to show $\ct_{\ge-\ell}\subset({}^{\perp_{\ct}}\ct_{\ge0})*\ct_{\ge0}$.
Using Proposition \ref{one step} repeatedly, we have
\[\ct_{\ge-\ell}\subset S^{-1}(\ch[\ell])*\ct_{\ge1-\ell}\subset S^{-1}(\ch[\ell])*S^{-1}(\ch)[\ell-1]*\ct_{\ge2-\ell}\subset\cdots\]
and hence
\begin{equation}\label{heart extension}
\ct_{\ge-\ell}\subset S^{-1}(\ch)[\ell]*S^{-1}(\ch)[\ell-1]*\cdots*S^{-1}(\ch)[1]*\ct_{\ge0}.
\end{equation}
This shows the desired equality $({}^{\perp_{\ct}}\ct_{\ge0})*\ct_{\ge0}=\ct$
since by the relative Serre duality $S^{-1}(\ch)[\ell]*\cdots*S^{-1}(\ch)[1]\subseteq{}^{\perp_{\ct}}\ct_{\ge0}$ holds.
Thus $({}^{\perp_{\ct}} \cm[<\hspace{-3pt}0],{}^{\perp_{\ct}} \cm[>\hspace{-3pt}0])$ is a t-structure.

Now we show ${}^{\perp_{\ct}}\ct_{\ge0}\subset\ct^{\fd}$.
For any $X\in{}^{\perp_{\ct}}\ct_{\ge0}$, we take $\ell\gg0$ such that $X\in\ct_{\ge-\ell}$. Applying Lemma \ref{belong to add} to \eqref{heart extension}, we have $X\in\thick S^{-1}(\ch)\subset\ct^{\fd}$.

The remaining statements
follow immediately from the relative Serre duality.
\end{proof}

\section{Silting reduction versus Calabi--Yau reduction}
\label{s:silting-red-vs-cy-red}

In Theorems~\ref{equivalence} and \ref{t:triangle-equivalence}, we realise silting reduction as subfactor categories. This is analogous to the Calabi--Yau reduction introduced by Yoshino and the first author in \cite{IyamaYoshino08}. In this section, we relate these two constructions, using the results in the preceding sections.
We will show that silting reduction of Calabi--Yau triangulated categories induces Calabi--Yau reduction (Theorem~\ref{main theorem new}).

Throughout this section, let $k$ be a field and let $D=\Hom_k(-,k)$ denote the $k$-dual.  Let $d\geq 1$ be an integer.

\subsection{Calabi--Yau triples}\label{ss:setup}
Let $\ct$ be $k$-linear  triangulated category, $\cm$ a subcategory of $\ct$ and $\ct^{\fd}$ a triangulated subcategory of $\ct$. 
We say that $(\ct,\ct^{\fd},\cm)$ is a \emph{$(d+1)$-Calabi--Yau triple} if the following conditions are satisfied.
\begin{itemize}
\item[(CY1)] The category $\ct$ is Hom-finite and Krull--Schmidt.
\item[(CY2)] The pair $(\ct,\ct^{\fd})$ is \emph{relative $(d+1)$-Calabi--Yau}
in the sense that there exists a bifunctorial isomorphism for any $X\in\ct^{\fd}$ and $Y\in\ct$:
\[D\Hom_{\ct}(X,Y)\simeq \Hom_{\ct}(Y,X[d+1]).\]
\item[(CY3)] The subcategory $\cm$ is a silting subcategory of $\ct$ and admits a right adjacent t-structure $(\ct^{\le0},\ct^{\ge0}):=(\cm[<\hspace{-3pt}0]^{\perp_\ct},\cm[>\hspace{-3pt}0]^{\perp_\ct})$ with $\ct^{\geq 0}\subset\ct^{\fd}$. Moreover, $\cm$ is a dualizing $k$-variety.
\end{itemize}

It follows from Theorem~\ref{left-right} that $\cm$ is a functorially finite subcategory of $\ct$. We remark that the condition that $\cm$ is a dualizing $k$-variety will not be used in this section and Section~\ref{ss:silting-reduction-is-3cy} but will be crucial in Sections~\ref{ss:AGK-cluster-category} and \ref{ss:silting-red-vs-cy-red}. We remind the reader that if $\cm=\add M$ is the additive closure of a silting object $M$ then $\cm$ is automatically a dualizing $k$-variety. By Theorem~\ref{left-right} again,  (CY3) is equivalent to its dual:
\begin{itemize}
\item[(CY3$^{\op}$)] The subcategory $\cm$ is a silting subcategory of $\ct$ and admits a left adjacent t-structure $({}^{\perp_{\ct}}\cm[<\hspace{-3pt}0],{}^{\perp_{\ct}}\cm[>\hspace{-3pt}0])$ with ${}^{\perp_{\ct}}\cm[<\hspace{-3pt}0]\subset\ct^{\fd}$. Moreover, $\cm$ is a dualizing $k$-variety.
\end{itemize}

Note that the condition (CY3) is independent of the choice of $\cm$ in the following sense:

\begin{remark}\label{idependence of M}
Let $\cm$ and $\cn$ be silting subcategories of $\ct$ which are dualizing $k$-varieties and compatible with each other.  Then $(\ct,\ct^{\fd},\cm)$ is a $(d+1)$-Calabi--Yau triple if and only if $(\ct,\ct^{\fd},\cn)$ is a $(d+1)$-Calabi--Yau triple.
\end{remark}

\begin{proof}
We will show the `only if' part. By Theorem \ref{l:shift-t-str}, $\cn$ admits a right adjacent t-structure $(\cn[<\hspace{-3pt}0]^{\perp_\ct},\cn[>\hspace{-3pt}0]^{\perp_\ct})$.
 Take $\ell\gg0$ such that $\cm\subset\cn[-\ell]*\cn[1-\ell]*\cdots*\cn[\ell-1]*\cn[\ell]$.
Then $\cn[>\hspace{-3pt}0]^{\perp_\ct}\subset \cm[>\hspace{-3pt}\ell]^{\perp_\ct}\subset \ct^{\fd}$. Thus
 $(\ct,\ct^\fd,\cn)$ is a $(d+1)$-Calabi--Yau triple.
\end{proof}

In the rest of this subsection, let $(\ct,\ct^\fd,\cm)$ be a $(d+1)$-Calabi--Yau triple. For simplicity, we assume $\cm=\add\cm$. Put
\begin{eqnarray*}
\ct_{\leq 0}&:=&\bigcup_{i\ge0}\cm*\cm[1]*\cdots*\cm[i],\\
\ct_{\geq 0}&:=&\bigcup_{i\ge0}\cm[-i]*\cdots*\cm[-1]*\cm.
\end{eqnarray*}
Then $(\ct_{\geq 0},\ct_{\leq 0})$ is a bounded co-t-structure on $\ct$ with co-heart $\cm$, by Proposition~\ref{from silting to co-t-structure}.
As a consequence, $\ct_{\leq 0}=\ct_{\geq 0}[-1]^{\perp_{\ct}}=\cm[<\hspace{-3pt}0]^{\perp_\ct}=\ct^{\leq 0}$. Moreover, since $\ct^{\fd}$ is closed under shifts, we have $\ct^{\geq i}\subset \ct^{\fd}$ for any $i\in\mathbb{Z}$.

Now we show that the t-structure $(\ct^{\leq 0},\ct^{\geq 0})$ restricts to a t-structure on $\ct^{\fd}$.

\begin{lemma}\label{l:t-str-restricts-to-fd-objects}
The pair $(\ct^{\fd}\cap \ct^{\leq 0},\ct^{\geq 0})$ is a bounded t-structure on $\ct^{\fd}$. It has the same heart $\ch$ as $(\ct^{\leq 0},\ct^{\geq 0})$. Consequently, $\ct^{\fd}$ is the smallest triangulated subcategory of $\ct$ containing $\ch$.
\end{lemma}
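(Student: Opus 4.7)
The plan is to verify the two defining properties of a t-structure on $\ct^{\fd}$, then read off the heart and boundedness. Throughout, I will leverage the fact that the t-structure $(\ct^{\le0},\ct^{\ge0})$ already exists on $\ct$ and that $\ct^{\ge0}\subset\ct^{\fd}$.

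First I would check the orthogonality condition. Since $\ct^{\fd}$ is a full subcategory of $\ct$, we have
\[
\Hom_{\ct^{\fd}}(\ct^{\fd}\cap\ct^{\le0},\,\ct^{\ge1})\subseteq\Hom_{\ct}(\ct^{\le0},\ct^{\ge1})=0,
\]
which follows from the t-structure on $\ct$. Closure of $\ct^{\fd}\cap\ct^{\le0}$ under $[1]$ and of $\ct^{\ge0}$ under $[-1]$ is immediate from the analogous statements for $(\ct^{\le0},\ct^{\ge0})$ together with the fact that $\ct^{\fd}$ is a triangulated subcategory.

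Next I would establish the truncation triangle. For any $X\in\ct^{\fd}$, the t-structure on $\ct$ yields a triangle
\[
\sigma^{\le0}X\longrightarrow X\longrightarrow \sigma^{\ge1}X\longrightarrow\sigma^{\le0}X[1].
\]
By assumption $\sigma^{\ge1}X\in\ct^{\ge1}\subset\ct^{\ge0}\subset\ct^{\fd}$, and since $X,\sigma^{\ge1}X\in\ct^{\fd}$ and $\ct^{\fd}$ is a triangulated subcategory, we conclude $\sigma^{\le0}X\in\ct^{\fd}\cap\ct^{\le0}$. Hence $(\ct^{\fd}\cap\ct^{\le0},\ct^{\ge0})$ is a t-structure on $\ct^{\fd}$.

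For the heart, the computation
\[
(\ct^{\fd}\cap\ct^{\le0})\cap\ct^{\ge0}=\ct^{\fd}\cap\ch=\ch
\]
is immediate once we observe that $\ch\subset\ct^{\ge0}\subset\ct^{\fd}$.

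For boundedness, I would invoke Lemma \ref{l:fd-has-fd-cohomology}: an object $X$ lies in $\ct^{\fd}$ if and only if $\Hom_{\ct}(\cm,X[i])=0$ for all but finitely many $i$. Unwinding the definitions of $\ct^{\le a}$ and $\ct^{\ge b}$ via $\cm[<\hspace{-3pt}0]^{\perp_\ct}$ and $\cm[>\hspace{-3pt}0]^{\perp_\ct}$, this vanishing for $i>a$ and $i<b$ is exactly the statement that $X\in\ct^{\le a}\cap\ct^{\ge b}$. Therefore
\[
\ct^{\fd}=\bigcup_{n\in\Z}\bigl(\ct^{\fd}\cap\ct^{\le n}\bigr)=\bigcup_{n\in\Z}\ct^{\ge -n},
\]
and the restricted t-structure is bounded. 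The final consequence, that $\ct^{\fd}=\thick_{\ct}\ch$, is then the standard fact that a triangulated category equipped with a bounded t-structure equals the thick closure of its heart.

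I do not expect any serious obstacle here; the only subtlety is the verification that the truncation functors actually keep us inside $\ct^{\fd}$, which rests entirely on the hypothesis $\ct^{\ge0}\subset\ct^{\fd}$ together with two-out-of-three for triangulated subcategories. Boundedness is the only step requiring a nontrivial prior result, namely Lemma \ref{l:fd-has-fd-cohomology}.
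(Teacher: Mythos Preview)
Your proof is correct and follows essentially the same approach as the paper: the truncation triangle for $X\in\ct^{\fd}$ plus two-out-of-three gives the t-structure, the heart computation is immediate from $\ch\subset\ct^{\ge0}\subset\ct^{\fd}$, and boundedness is deduced from Lemma~\ref{l:fd-has-fd-cohomology} exactly as you do. The only cosmetic difference is that you spell out orthogonality and shift-closure explicitly, and in the last line you say ``thick closure'' rather than ``smallest triangulated subcategory''; the lemma's statement only asserts the latter, so you may want to match that wording.
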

\begin{proof}
For $X\in\ct^{\fd}$, there is a triangle
\[
\xymatrix{
\sigma^{\leq 0}X\ar[r] & X\ar[r] & \sigma^{\geq 1}X\ar[r] & (\sigma^{\leq 0}X)[1].
}
\]
Since both $X$ and $\sigma^{\geq 1}X$ belong to the triangulated subcategory $\ct^{\fd}$ of $\ct$, it follows that $\sigma^{\leq 0}X$ belongs to $\ct^{\fd}$ and hence to $\ct^{\fd}\cap\ct^{\leq 0}$. This shows that $(\ct^{\fd}\cap \ct^{\leq 0},\ct^{\geq 0})$ is a t-structure on $\ct^{\fd}$.

Let $X$ be any object of $\ct^{\fd}$. By Lemma~\ref{l:fd-has-fd-cohomology}, there exist integers $i\leq j$ such that $\Hom_\ct(\cm,X[<\hspace{-3pt}i])=0$ and $\Hom_\ct(\cm,X[>\hspace{-3pt}j])=0$. Namely, $X$ belongs to $\ct^{\fd}\cap\ct^{\leq j}\cap\ct^{\geq i}$.  By definition the t-structure $(\ct^{\fd}\cap \ct^{\leq 0},\ct^{\geq 0})$ is bounded.

The second statement holds true because $\ct^{\geq 0}\subset \ct^{\fd}$.
\end{proof}

\begin{remark}
Assume further that $\ct$ is algebraic and $\cm=\add M$ is the additive closure of a silting object $M$. Then there is a dg algebra $A$ such that there is a triangle equivalence $\ct\to\per(A)$ which takes $M$ to $A$, see Section~\ref{ss:derived-category}.
It follows that $H^i(A)\simeq\Hom_{\per(A)}(A,A[i])\simeq\Hom_\ct(M,M[i])=0$ for $i>0$ and $H^0(A)\simeq\End_{\per(A)}(A)\simeq\End_\ct(M)$ is finite-dimensional over $k$.
Let 
\[\ch:=\{X\in\per(A)\mid H^i(X)=0\mbox{ for all }i\neq0\}.\]
By Proposition~\ref{l:heart-of-the-adjacent-t-structure}, we have an equivalence
\[H^0=\Hom_{\per(A)}(A,-)\colon\ch\to\mod H^0(A).\]
Therefore we have an equality
\[\ch=\{X\in\cd(A)\mid H^i(X)=0\mbox{ for any }i\neq0,\ H^0(X)\in\mod H^0(A)\},\]
which implies $\per(A)\supset\cd_{\fd}(A)$, since $\cd_{\fd}(A)$ is the smallest triangulated subcategory of $\cd(A)$ containing $\ch$, see for example \cite[Proposition 2.5(b)]{KalckYang12}. Comparing this with Lemma~\ref{l:t-str-restricts-to-fd-objects}, we obtain that the equivalence $\ct\to\per(A)$ restricts to a triangle equivalence $\ct^{\fd}\rightarrow \cd_{\fd}(A)$. Thus the dg algebra $A$ satisfies the following conditions
\begin{itemize}
\item[(1)] $H^i(A)=0$ for $i>0$;
\item[(2)] $H^0(A)$ is finite-dimensional over $k$;
\item[(3)] $\per(A)\supset \cd_{\fd}(A)$;
\item[(4)] there is a bifunctorial isomorphism for $X\in\cd_{\fd}(A)$ and $Y\in\per(A)$
\[
D\Hom_{\per(A)}(X,Y)\simeq \Hom_{\per(A)}(Y,X[d+1]).
\]
\end{itemize}
This is very close to the original setting of Amiot in \cite[Section 2]{Amiot09} and of Guo in \cite[Section 1]{Guolingyan11a}.
\end{remark}

\subsection{The silting reduction of a Calabi--Yau triple}\label{ss:silting-reduction-is-3cy}

Let $(\ct,\ct^{\fd},\cm)$ be a $(d+1)$-Calabi--Yau triple, as in Section~\ref{ss:setup}.  
Let $\cp$ be a functorially finite subcategory of $\cm$. Then $\cp$ is a presilting subcategory of $\ct$ satisfying the conditions (P1) and (P2) in Section~\ref{ss:add-equiv}. Let
\[
\SS:=\thick\cp,\ \ \ \cu:=\ct/\SS.
\] 
Let $\rho\colon\ct\to\cu$ be the canonical projection functor. By abuse of notation, we will write $\cm$ for $\rho(\cm)$.
By the relative $(d+1)$-Calabi--Yau property (CY2), we have $\ct^{\fd}\cap\SS^{\perp_\ct}=\ct^{\fd}\cap\, {}^{\perp_\ct}\SS$, which will be denoted by $\cu^{\fd}$, i.e.
\[
\cu^{\fd}:=\ct^{\fd}\cap\SS^{\perp_\ct}=\ct^{\fd}\cap\, {}^{\perp_\ct}\SS.
\]
This category can be considered as a full subcategory of $\cu$ (by, for example, \cite[Lemma 9.1.5]{Neeman01b}).

\begin{theorem}\label{p:silting-reduction-is-3cy} The triple $(\cu,\cu^{\fd},\cm)$ is a $(d+1)$-Calabi--Yau triple. Namely,
\begin{itemize}
\item[(a)] $\cu$ is Hom-finite and Krull--Schmidt.
\item[(b)] The pair $(\cu,\cu^{\fd})$ is relative $(d+1)$-Calabi--Yau.
\item[(c)] The subcategory $\cm$ of $\cu$ is a dualizing $k$-variety. It is a silting subcategory of $\cu$ and admits a right adjacent t-structure $(\cm[<\hspace{-3pt}0]^{\perp_{\cu}},\cm[>\hspace{-3pt}0]^{\perp_{\cu}})$ with $\cm[>\hspace{-3pt}0]^{\perp_{\cu}}\subset\cu^{\fd}$. 
\end{itemize}
\end{theorem}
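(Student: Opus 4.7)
The plan is to use the additive equivalence $\bar\rho\colon \cz/[\cp] \xrightarrow{\simeq} \cu$ of Theorems~\ref{equivalence}--\ref{t:triangle-equivalence}, together with the Hom-comparison in Lemma~\ref{fully faithful}, as a dictionary for translating each Calabi--Yau-triple axiom from $(\ct,\ct^{\fd},\cm)$ to $(\cu,\cu^{\fd},\cm)$. At several points I will also need the relative Serre duality (CY2) in $\ct$ to convert $\cz$-conditions into the stronger $\cu^{\fd}$-conditions.

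For (a), $\cz$ is defined as an intersection of perps, hence closed under direct summands, and Krull--Schmidtness of $\cz$ is inherited from $\ct$; Hom-finiteness of $\cu$ is immediate from Lemma~\ref{fully faithful}, which presents $\Hom_\cu$ between $\cz$-representatives as a subquotient of $\Hom_\ct$. Krull--Schmidtness of $\cu$ will follow once (c) is in hand, via the idempotent-completeness criterion of Theorem~\ref{c:idempotent-completeness-of-co-heart-implies-idempotent-completeness} applied to the silting subcategory $\cm$ of $\cu$ (Theorem~\ref{t:silting-reduction}). For (b), any $X\in\cu^{\fd}$ lifts to an object of $\ct^{\fd}\cap\SS^{\perp_\ct}\cap{}^{\perp_\ct}\SS\subseteq\cz$ and any $Y\in\cu$ has a representative $\tilde Y\in\cz$. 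Lemma~\ref{fully faithful} identifies $\Hom_\cu(X,Y)=\Hom_\ct(X,\tilde Y)$ (no $[\cp]$-quotient, since morphisms from $X\in{}^{\perp_\ct}\SS$ cannot factor through $\cp$) and $\Hom_\cu(Y,X[d+1])=\Hom_\ct(\tilde Y,X[d+1])$ (the positive-shift case), and the ambient $(d+1)$-Calabi--Yau pairing in $\ct$ transports bifunctorially to $\cu$.

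For (c), $\cm$ is silting in $\cu$ by Theorem~\ref{t:silting-reduction}, and the additive quotient $\cm/[\cp]$ inherits the dualizing $k$-variety structure from $\cm$ because $\mod(\cm/[\cp])$ and $\mod(\cm/[\cp])^{\op}$ identify with the full subcategories of $\mod\cm$ and $\mod\cm^{\op}$ of modules vanishing on $\cp$, on which the duality $D$ restricts. For the right adjacent t-structure, given $Y\in\cu$ with representative $\tilde Y\in\cz$, I would apply the ambient truncation triangle $\sigma^{\leq 0}\tilde Y\to\tilde Y\to\sigma^{\geq 1}\tilde Y\to(\sigma^{\leq 0}\tilde Y)[1]$ for $(\ct^{\leq 0},\ct^{\geq 0})$. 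A short Hom-chase using the triangle shows that both truncated pieces remain in $\cz$. Applying $\rho$ and invoking Lemma~\ref{fully faithful} shows that their images land in $\cm[<\hspace{-3pt}0]^{\perp_\cu}$ and $\cm[\geq\hspace{-3pt}0]^{\perp_\cu}$ respectively, providing the torsion-pair decomposition; the orthogonality $\Hom_\cu(\cm[<\hspace{-3pt}0]^{\perp_\cu},\cm[\geq\hspace{-3pt}0]^{\perp_\cu})=0$ follows because the same Hom-comparison forces any $\cz$-representative of an object in $\cm[<\hspace{-3pt}0]^{\perp_\cu}$ (respectively $\cm[\geq\hspace{-3pt}0]^{\perp_\cu}$) to lie in $\ct^{\leq 0}$ (respectively $\ct^{\geq 1}$), where the orthogonality holds in $\ct$.

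The main obstacle is verifying the last assertion $\cm[>\hspace{-3pt}0]^{\perp_\cu}\subseteq\cu^{\fd}$. The natural candidate representative $\tilde Y\in\cz\cap\ct^{\geq 0}$ automatically lies in $\ct^{\fd}$ by (CY3), but membership in $\cu^{\fd}=\ct^{\fd}\cap\SS^{\perp_\ct}\cap{}^{\perp_\ct}\SS$ requires the strictly stronger vanishings $\Hom_\ct(\cp,\tilde Y)=0=\Hom_\ct(\tilde Y,\cp)$ in degree zero, which are not built into the definition of $\cz$. Here I would exploit the relative Calabi--Yau isomorphism $D\Hom_\ct(\tilde Y,\cp[j])\simeq \Hom_\ct(\cp,\tilde Y[j+d+1])$ available because $\tilde Y\in\ct^{\fd}$: choosing $j=0$ and $j=-(d+1)$ converts the already-known vanishings $\Hom_\ct(\cp,\tilde Y[k])=0$ and $\Hom_\ct(\tilde Y,\cp[k])=0$ for $k>0$ (coming from $\tilde Y\in\cz$) into the two missing degree-zero vanishings, after which thickness of $\SS=\thick\cp$ upgrades $\cp$-orthogonality to $\SS$-orthogonality and places $\tilde Y$ in $\cu^{\fd}$.
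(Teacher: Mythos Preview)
Your strategy coincides with the paper's: transport everything through the equivalence $\bar\rho\colon\cz/[\cp]\simeq\cu$, verify (CY1)--(CY3) on the $\cz$-side, and use the ambient truncation triangle $\sigma^{\le0}\tilde Y\to\tilde Y\to\sigma^{\ge1}\tilde Y$ to produce the t-structure on $\cu$. The paper packages the two key technical steps as separate lemmas (Lemma~\ref{l:fd-objects-of-Z}: $\ct^{\fd}\cap\cz=\cu^{\fd}$; Lemma~\ref{truncation new}: both truncated pieces stay in $\cz$), but your outline contains both.

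There is, however, a genuine gap in part~(c). You claim that Lemma~\ref{fully faithful} verifies $\rho(\sigma^{\ge1}\tilde Y)\in\cm[\ge0]^{\perp_\cu}$, i.e.\ $\Hom_\cu(\cm,(\sigma^{\ge1}\tilde Y)[\ell])=0$ for all $\ell\le0$. But that lemma only compares $\Hom_\ct(M,N[\ell])$ with $\Hom_\cu(M,N[\ell])$ for $\ell>0$ (and $\ell=0$ modulo~$[\cp]$); it says nothing about negative shifts. Your orthogonality argument has the same defect: one cannot conclude that an arbitrary $\cz$-representative of an object in $\cm[\ge0]^{\perp_\cu}$ lies in $\ct^{\ge1}$ (though the orthogonality itself is free from Lemma~\ref{no Homs} applied in $\cu$). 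The paper circumvents this by working in $\cz/[\cp]$ with its intrinsic shift $\langle1\rangle$: one has $\cm\langle i\rangle\subset\cp*\cdots*\cp[i-1]*\cm[i]$ for $i\ge0$, each factor satisfies $\Hom_\ct(-,\ct^{\ge1})=0$, and since $\Hom_{\cz/[\cp]}$ is a \emph{quotient} of $\Hom_\ct$ the required vanishing follows directly. Alternatively, your ``main obstacle'' computation---which is precisely Lemma~\ref{l:fd-objects-of-Z}---should be moved \emph{before} the t-structure verification: once you know $\sigma^{\ge1}\tilde Y\in\ct^{\fd}\cap\cz=\cu^{\fd}\subset\SS^{\perp_\ct}\cap{}^{\perp_\ct}\SS$, standard localisation gives $\Hom_\cu(A,\sigma^{\ge1}\tilde Y)=\Hom_\ct(A,\sigma^{\ge1}\tilde Y)$ for all $A\in\ct$, and the vanishing is then immediate from $\sigma^{\ge1}\tilde Y\in\ct^{\ge1}$.

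Two minor corrections. Your Serre-duality formula has a sign slip: (CY2) with $X=\tilde Y\in\ct^{\fd}$ gives $D\Hom_\ct(\tilde Y,\cp[j])\simeq\Hom_\ct(\cp,\tilde Y[d+1-j])$, not $[j+d+1]$; the correct choices $j=0$ and $j=d+1$ then reduce to the known $\cz$-vanishings. And for~(a), the paper simply notes that an additive quotient of a Krull--Schmidt category by an ideal is again Krull--Schmidt; appealing to Theorem~\ref{c:idempotent-completeness-of-co-heart-implies-idempotent-completeness} after~(c) is unnecessarily roundabout.
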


In the proof of this theorem a crucial role is played by the following description of $\cu$ obtained in Section~\ref{s:subfactor-category}: Let
\begin{equation}\label{P and Z}
\cz:=({}^{\perp_{\ct}}\SS_{<0})\cap(\SS_{>0}{}^{\perp_{\ct}}),
\end{equation}
then we have a triangle equivalence (Theorems~\ref{equivalence} and \ref{t:triangle-equivalence})
\[
G\colon\frac{\cz}{[\cp]}\stackrel{\simeq}{\longrightarrow}\cu.
\]
Our strategy is to show that under $G$ the triple $(\cu,\cu^{\fd},\cm)$ is equivalent to $(\frac{\cz}{[\cp]},\ct^{\fd}\cap\cz,\frac{\cm}{[\cp]})$ and then prove Theorem~\ref{p:silting-reduction-is-3cy} for $(\frac{\cz}{[\cp]},\ct^{\fd}\cap\cz,\frac{\cm}{[\cp]})$.
We need some further preparation.

\begin{lemma}\label{l:fd-objects-of-Z}
We have an equality $\cu^{\fd}=\ct^{\fd}\cap \cz$ of subcategories of $\ct$.
\end{lemma}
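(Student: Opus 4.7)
The plan is to prove the two inclusions separately, with the nontrivial direction using the relative Calabi--Yau duality to reduce the needed vanishing to the defining conditions of $\cz$.

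For the inclusion $\cu^{\fd}\subseteq\ct^{\fd}\cap\cz$: this is immediate from the inclusions $\SS_{>0}\subseteq\SS$ and $\SS_{<0}\subseteq\SS$ together with the symmetric description $\cu^{\fd}=\ct^{\fd}\cap\SS^{\perp_\ct}=\ct^{\fd}\cap{}^{\perp_\ct}\SS$. Indeed, if $X\in\cu^{\fd}$, then $\Hom_\ct(\SS_{>0},X)=0$ gives $X\in\SS_{>0}{}^{\perp_\ct}$, while $\Hom_\ct(X,\SS_{<0})=0$ gives $X\in{}^{\perp_\ct}\SS_{<0}$, so $X\in\cz$.

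For the reverse inclusion, take $X\in\ct^{\fd}\cap\cz$; the goal is to show $\Hom_\ct(\SS,X)=0$. Since $\cp$ is presilting in $\ct$ and $\SS=\thick\cp$, the subcategory $\cp$ is a silting subcategory of $\SS$, and Proposition~\ref{from silting to co-t-structure}(a) applied inside $\SS$ writes every object of $\SS$ as a finite iterated extension of shifts $\cp[i]$ with $i\in\Z$. By the long exact sequence attached to each extension it therefore suffices to verify that $\Hom_\ct(\cp[i],X)=0$ for every $i\in\Z$.

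For $i<0$ one has $\cp[i]\in\SS_{>0}$, and the vanishing follows directly from $X\in\SS_{>0}{}^{\perp_\ct}$. For $i\ge 0$ we invoke the relative $(d+1)$-Calabi--Yau duality (CY2): since $X\in\ct^{\fd}$,
\[
\Hom_\ct(\cp[i],X)\;\simeq\;D\Hom_\ct(X,\cp[i+d+1]).
\]
Because $d\ge 1$, we have $i+d+1\ge 2$, so $\cp[i+d+1]$ lies in $\SS_{<0}$, and hence the right-hand side vanishes by $X\in{}^{\perp_\ct}\SS_{<0}$. This completes the proof. The only slightly subtle point (not really an obstacle) is handling the indices $0\le i\le d$, which is exactly where the CY-shift by $d+1$ is essential to land the object back in $\SS_{<0}$.
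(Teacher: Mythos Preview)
Your proof is correct and follows essentially the same approach as the paper: both arguments use the relative $(d+1)$-Calabi--Yau duality to convert the condition $\Hom_\ct(X,\SS_{<0})=0$ into a vanishing of $\Hom_\ct(-,X)$ on positive shifts of $\cp$, which then combines with $\Hom_\ct(\SS_{>0},X)=0$ to yield $X\in\SS^{\perp_\ct}$. The paper packages this as a single chain of equivalences (using $\SS=\SS_{>0}*\SS_{\le0}$ from Proposition~\ref{torsion pairs}), while you split into two inclusions and reduce to individual shifts $\cp[i]$; the underlying mechanism is identical.
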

\begin{proof}
Let $X\in\ct^{\fd}$. Then $X\in\cz$ if and only if $\Hom_\ct(X,\SS_{<0})=0$ and $\Hom_\ct(\SS_{>0},X)=0$. By the relative $(d+1)$-Calabi--Yau property, this amounts to $\Hom_\ct(\SS_{<d+1},X)=0$ and $\Hom_\ct(\SS_{>0},X)=0$, which, by $\SS=\SS_{>0}*\SS_{\le0}$ (Proposition~\ref{from silting to co-t-structure}), is equivalent to $X\in\SS^{\perp_\ct}$.
\end{proof}

For $X\in\ct$, we have a triangle
\begin{equation}\label{t-structure new}
\xymatrix{\sigma^{\le0}X\ar[r]^{a_X}&X\ar[r]^{b_X}&\sigma^{\geq 1}X\ar[r]^{c_X}&(\sigma^{\le0}X)[1]}
\end{equation}
in $\ct$ such that $\sigma^{\le0}X\in\ct^{\le0}$ and
$\sigma^{\geq 1}X\in\ct^{\geq 1}\subset\ct^{\fd}$.

\begin{lemma}\label{truncation new}
Let $X\in\cz$.
Then $\sigma^{\geq 1}X\in \ct^{\fd}\cap\cz$
and $\sigma^{\le0}X\in\cz$.
\end{lemma}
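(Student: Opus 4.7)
\textbf{Proof plan for Lemma \ref{truncation new}.}

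The plan is to split the lemma into the two statements and to exploit Lemma~\ref{l:fd-objects-of-Z} as the main shortcut. Since $\sigma^{\geq 1}X\in\ct^{\geq 1}\subseteq \ct^{\geq 0}\subseteq \ct^{\fd}$ by condition (CY3), Lemma~\ref{l:fd-objects-of-Z} reduces $\sigma^{\geq 1}X\in\ct^{\fd}\cap\cz$ to verifying $\sigma^{\geq 1}X\in\SS^{\perp_\ct}$, i.e.\ $\Hom_\ct(\cp[i],\sigma^{\geq 1}X)=0$ for every $i\in\Z$. I would prove this by splitting on the sign of $i$.

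For $i\geq 0$, using the identification $\ct^{\geq 1}=\cm[\geq 0]^{\perp_\ct}$ (immediate from $\ct^{\geq 0}=\cm[>0]^{\perp_\ct}$ and $\cp\subseteq\cm$) gives the vanishing at once. For $i\leq -1$ I would apply $\Hom_\ct(\cp[i],-)$ to the triangle \eqref{t-structure new}, producing an exact sequence
\[
\Hom_\ct(\cp[i],X)\longrightarrow \Hom_\ct(\cp[i],\sigma^{\geq 1}X)\longrightarrow \Hom_\ct(\cp[i],\sigma^{\leq 0}X[1]).
\]
The left term vanishes because $\cp[i]\in\SS_{>0}$ and $X\in\cz\subseteq\SS_{>0}{}^{\perp_\ct}$. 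The right term vanishes by the co-t-structure $(\ct_{\geq 0},\ct_{\leq 0})$: since $\cp[i]\subseteq\cm[i]\subseteq\ct_{\geq -i}\subseteq\ct_{\geq 1}$ (as $-i\geq 1$) while $\sigma^{\leq 0}X[1]\in\ct^{\leq -1}=\ct_{\leq -1}\subseteq\ct_{\leq 0}$, we conclude from $\Hom_\ct(\ct_{\geq 1},\ct_{\leq 0})=0$. Hence $\sigma^{\geq 1}X\in\ct^{\fd}\cap\cz$.

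With this in hand, the statement $\sigma^{\leq 0}X\in\cz$ follows by applying $\Hom_\ct(-,\cp[j])$ and $\Hom_\ct(\cp,-)$ to \eqref{t-structure new} for each $j\geq 1$. The long exact sequences
\[
\Hom_\ct(X,\cp[j])\to\Hom_\ct(\sigma^{\leq 0}X,\cp[j])\to\Hom_\ct(\sigma^{\geq 1}X,\cp[j+1])
\]
and
\[
\Hom_\ct(\cp,\sigma^{\geq 1}X[j-1])\to\Hom_\ct(\cp,\sigma^{\leq 0}X[j])\to\Hom_\ct(\cp,X[j])
\]
have their outer $X$-terms killed by $X\in\cz$; the remaining outer terms are killed by the just-proved $\sigma^{\geq 1}X\in\cz$, with the single edge case $j=1$ of $\Hom_\ct(\cp,\sigma^{\geq 1}X)$ handled directly via $\sigma^{\geq 1}X\in\ct^{\geq 1}\subseteq \cm^{\perp_\ct}$. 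Thus $\sigma^{\leq 0}X\in{}^{\perp_\ct}\SS_{<0}\cap\SS_{>0}{}^{\perp_\ct}=\cz$.

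The only subtle point is the $i\leq -1$ case in the first half: there one must combine two different vanishing mechanisms (the definition of $\cz$ to kill the source term, and the co-t-structure compatibility to kill the connecting term), since neither alone suffices to eliminate $\Hom_\ct(\cp[i],\sigma^{\geq 1}X)$. Once this is done, the rest of the argument is a purely formal chase through the truncation triangle.
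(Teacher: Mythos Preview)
Your proof is correct and follows essentially the same route as the paper's: both apply $\Hom_\ct(\cp,-)$ and $\Hom_\ct(-,\cp)$ to the truncation triangle and use Lemma~\ref{l:fd-objects-of-Z} to pass from $\sigma^{\geq 1}X\in\SS^{\perp_\ct}$ to $\sigma^{\geq 1}X\in\cz$. One small simplification: your second long exact sequence is unnecessary, since $\Hom_\ct(\cp,\sigma^{\leq 0}X[j])=0$ for $j\geq 1$ is immediate from $\sigma^{\leq 0}X\in\ct^{\leq 0}=\ct_{\leq 0}$ and $\cp\subset\cm$ (this is the paper's observation \eqref{i ge1 new}).
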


\begin{proof}
Since $\cp\subset\cm$, we have by the definition of $\ct^{\geq 1}$ that
\begin{eqnarray}\label{i le0 new}
\Hom_{\ct}(\cp,(\sigma^{\geq 1}X)[i])=0\ \mbox{ for any }\ i\le0,
\end{eqnarray}
and by the definition of $\ct^{\leq 0}$ that
\begin{eqnarray}\label{i ge1 new} 
\Hom_{\ct}(\cp,(\sigma^{\le0}X)[i])=0\ \mbox{ for any }\ i\ge1.
\end{eqnarray}
Applying $\Hom_{\ct}(\cp,-)$ to the triangle \eqref{t-structure new},
we obtain an exact sequence
\[
\Hom_{\ct}(\cp,X[i])\to\Hom_{\ct}(\cp,(\sigma^{\geq 1}X)[i])\to
\Hom_{\ct}(\cp,(\sigma^{\le0}X)[i+1]).
\]
Assume $i\geq 1$. Then the left term vanishes because $X\in\cz$ and the right term vanishes due to \eqref{i ge1 new}.
Thus we have $\Hom_{\ct}(\cp,(\sigma^{\geq 1}X)[i])=0$ for any $i\ge1$. Combined with \eqref{i le0 new}, this yields 
$\sigma^{\geq 1}X\in \ct^{\fd}\cap\SS^{\perp_\ct}=\cu^{\fd}$. By Lemma~\ref{l:fd-objects-of-Z}, $\sigma^{\geq 1}X\in\ct^{\fd}\cap \cz$.

Moreover, $(\sigma^{\geq 1}X)[-1]$ belongs to $\cu^{\fd}=\ct^{\fd}\cap\cz$. Since $\cz$ is closed under extensions and $X\in\cz$, the triangle \eqref{t-structure new} shows $\sigma^{\le0}X\in\cz$.
\end{proof}

\begin{proof}[Proof of Theorem~\ref{p:silting-reduction-is-3cy}]
By Lemma~\ref{l:fd-objects-of-Z}, the category $\ct^{\fd}\cap \cz$ is left and right orthogonal to $\cp$, thus it can be viewed as a full subcategory of $\frac{\cz}{[\cp]}$.
Moreover, it follows from Lemma~\ref{l:fd-objects-of-Z} that on $\ct^{\fd}\cap \cz$ there is a natural isomorphism $\langle 1\rangle\simeq [1]$. Therefore $\ct^{\fd}\cap\cz$ is naturally a triangulated subcategory of $\frac{\cz}{[\cp]}$. 
Thanks to the equivalence $G$, to prove the theorem it suffices to show that the statements (a), (b) and (c) hold for the triple $(\frac{\cz}{[\cp]},\ct^{\fd}\cap \cz,\frac{\cm}{[\cp]})$.

(a) The category $\cz$ is a full subcategory of $\ct$ which is closed under direct summands. Thus it is a Hom-finite and Krull--Schmidt, so is the additive quotient $\frac{\cz}{[\cp]}$. 

(b) Since on $\ct^{\fd}\cap \cz$ there is a natural isomorphism $\langle 1\rangle\simeq [1]$, it follows that for $X\in\ct^{\fd}\cap\cz$ and $Y\in\frac{\cz}{[\cp]}$ we have 
$\Hom_\ct(X,\cp)\simeq D\Hom_\ct(\cp,X[d+1])=0$ and $\Hom_\ct(\cp,X[d+1])=0$. Therefore we have
bifunctorial isomorphisms
\begin{eqnarray*}
D\Hom_{\frac{\cz}{[\cp]}}(X,Y)&=&D\Hom_\cz(X,Y)
\simeq\Hom_\cz(Y,X[d+1])
=\Hom_{\frac{\cz}{[\cp]}}(Y,X[d+1])\\
&\simeq&\Hom_{\frac{\cz}{[\cp]}}(Y,X\langle d+1\rangle).
\end{eqnarray*}

(c) By Theorem~\ref{t:silting-reduction}, $\frac{\cm}{[\cp]}\subset\frac{\cz}{[\cp]}$ is a silting subcategory.
By Lemma~\ref{no Homs}, to prove that $(\frac{\cm}{[\cp]}\langle<\hspace{-3pt}0\rangle^{\perp_{\frac{\cz}{[\cp]}}},\frac{\cm}{[\cp]}\langle>\hspace{-3pt}0\rangle^{\perp_{\frac{\cz}{[\cp]}}})=(\cm\langle<\hspace{-3pt}0\rangle^{\perp_{\frac{\cz}{[\cp]}}},\cm\langle>\hspace{-3pt}0\rangle^{\perp_{\frac{\cz}{[\cp]}}})$ is a t-structure it suffices to prove $\frac{\cz}{[\cp]}=(\cm\langle<\hspace{-3pt}0\rangle^{\perp_{\frac{\cz}{[\cp]}}})*(\cm\langle\geq\hspace{-3pt}0\rangle^{\perp_{\frac{\cz}{[\cp]}}})$.
Let $X\in\cz$. 
By Theorem~\ref{triangle structure of Z/P}(b), the triangle \eqref{t-structure new} induces a triangle in $\frac{\cz}{[\cp]}$
\begin{equation}\label{eq:canonical-triangle-reduction}
\xymatrix{\sigma^{\le0}X\ar[r]^{\underline{a}_X}&X\ar[r]^{\underline{b}_X}&\sigma^{\geq 1}X\ar[r]&\sigma^{\le0}X\langle 1\rangle}.
\end{equation}
We only have to show that $\sigma^{\leq 0}X\in \cm\langle<\hspace{-3pt}0\rangle^{\perp_{\frac{\cz}{[\cp]}}}$ and $\sigma^{\geq 1}X\in \cm\langle\geq\hspace{-3pt}0\rangle^{\perp_{\frac{\cz}{[\cp]}}}$.
We know that $\sigma^{\geq 1}X\in \ct^{\fd}\cap\cz$ and $\sigma^{\le0}X\in\cz$ hold by Lemma \ref{truncation new}.

Fix $i\ge0$. Then we have $\cm\langle i\rangle\in
\cp*\cdots*\cp[i-1]*\cm[i]$ by the construction of $\langle i\rangle$.
This implies $\Hom_{\ct}(\cm\langle i\rangle,\ct^{\ge1})=0$.
Hence $\ct^{\ge1}\cap\cz\ni\sigma^{\geq 1}X$ is contained in
$\cm\langle\geq\hspace{-3pt}0\rangle^{\perp_{\frac{\cz}{[\cp]}}}$.

Fix $i>0$. Then we have $\cm\langle1-i\rangle\in \cm[1-i]*\cp[2-i]*\cdots*\cp$ by the construction of $\langle1-i\rangle$.
This implies $\Hom_{\ct}(\cm\langle1-i\rangle[-1],\ct^{\le0})=0$.
Further, for any $M\in\cm$, we have a triangle
\[
\xymatrix{
M\langle1-i\rangle[-1]\ar[r] & M\langle-i\rangle\ar[r]^(0.55)b & P\ar[r]^(0.38)a & M\langle1-i\rangle
}
\]
with a right $\cp$-approximation $a$.
Applying $\Hom_{\ct}(-,\ct^{\le0})$ to this triangle, we have that the map $\Hom_{\ct}(P,\ct^{\le0})\to\Hom_{\ct}(M\langle-i\rangle,\ct^{\le0})$ is surjective.
Hence $\Hom_{\frac{\cz}{[\cp]}}(\cm\langle-i\rangle,\ct^{\le0}\cap\cz)=0$, and
$\ct^{\le0}\cap\cz\ni\sigma^{\leq 0}X$ is contained in
$\cm\langle<\hspace{-3pt}0\rangle^{\perp_{\frac{\cz}{[\cp]}}}$.

Consequently, $(\cm\langle<\hspace{-3pt}0\rangle^{\perp_{\frac{\cz}{[\cp]}}},\cm\langle>\hspace{-3pt}0\rangle^{\perp_{\frac{\cz}{[\cp]}}})$ forms a t-structure on $\frac{\cz}{[\cp]}$.
Finally, if $X\in\cm\langle\ge\hspace{-3pt}0\rangle^{\perp_{\frac{\cz}{[\cp]}}}$, the triangle \eqref{eq:canonical-triangle-reduction} shows that $X$ is isomorphic to $\sigma^{\geq 1}X$ and hence lies in $\cu^{\fd}=\ct^{\fd}\cap\cz$. Consequently, $\cm\langle>\hspace{-3pt}0\rangle^{\perp_{\frac{\cz}{[\cp]}}} =(\cm\langle\ge\hspace{-3pt}0\rangle^{\perp_{\frac{\cz}{[\cp]}}})\langle1\rangle$ is contained in $\cu^{\fd}$.

Finally, that $\frac{\cm}{[\cp]}$ is a dualizing $k$-variety follows from the following elementary observation. This completes the proof.
\end{proof}

\begin{proposition}
Let $\cm$ be a dualizing $k$-variety and $\cp$ a functorially finite subcategory of $\cm$. Then $\frac{\cm}{[\cp]}$ is again a dualizing $k$-variety. 
\end{proposition}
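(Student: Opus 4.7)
The plan is to identify finitely presented $\bar\cm$-modules (where $\bar\cm:=\frac{\cm}{[\cp]}$) with finitely presented $\cm$-modules vanishing on $\cp$, and likewise on the opposite side, and then deduce the dualizing property for $\bar\cm$ from that of $\cm$. First, note that $\bar\cm$ is automatically $k$-linear and Hom-finite, as each $\Hom_{\bar\cm}(X,Y)$ is a quotient of $\Hom_\cm(X,Y)$.

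The quotient functor $\pi\colon\cm\to\bar\cm$ induces a fully faithful pullback $\pi^*\colon\Mod\bar\cm\to\Mod\cm$ whose essential image is $\{F\in\Mod\cm\mid F|_{\cp}=0\}$. The main technical step is to show that $\pi^*$ restricts to an equivalence
\[\mod\bar\cm\ \xrightarrow{\ \simeq\ }\ \{F\in\mod\cm\mid F|_{\cp}=0\}.\]
For $(\Leftarrow)$, if $F\in\mod\cm$ with $F|_\cp=0$ and $\Hom_\cm(-,M')\to\Hom_\cm(-,M)\to F\to0$ is a presentation, then applying $\pi$ to the representing morphism $M'\to M$ yields a presentation of the induced $\bar\cm$-module; this works because $F|_{\cp}=0$ forces the image at each $X$ to factor through $\Hom_{\bar\cm}(-,M)$. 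For $(\Rightarrow)$, start with a presentation
\[\Hom_{\bar\cm}(-,M')\xrightarrow{\bar{\tilde\alpha}}\Hom_{\bar\cm}(-,M)\to F\to0\]
in $\mod\bar\cm$, lift $\bar{\tilde\alpha}$ to $\tilde\alpha\colon M'\to M$ in $\cm$, and pick a right $\cp$-approximation $\beta\colon P\to M$ (which exists by contravariant finiteness of $\cp$ in $\cm$). A diagram chase shows that $\pi^*F$ is the cokernel of
\[\Hom_\cm(-,M'\oplus P)\xrightarrow{(\tilde\alpha,\beta)}\Hom_\cm(-,M),\]
since the image of this map equals $\tilde\alpha\cdot\Hom_\cm(-,M')+[\cp](-,M)$, which is precisely the kernel of $\Hom_\cm(-,M)\to\pi^*F$. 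Thus $\pi^*F\in\mod\cm$.

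Dually, using covariant finiteness of $\cp$ in $\cm$, the analogous statement for opposite categories yields an equivalence
\[\mod\bar\cm^{\op}\ \xrightarrow{\ \simeq\ }\ \{G\in\mod\cm^{\op}\mid G|_{\cp}=0\}.\]
Now for $F\in\mod\bar\cm$, view $\pi^*F\in\mod\cm$; since $\cm$ is a dualizing $k$-variety, $D(\pi^*F)\in\mod\cm^{\op}$. Because $D$ is computed pointwise, $D(\pi^*F)$ also vanishes on $\cp$, hence lies in the essential image of the dual pullback and thus corresponds to a finitely presented $\bar\cm^{\op}$-module; pointwise evaluation identifies this module with $DF$. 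The symmetric argument handles $F\in\mod\bar\cm^{\op}$, so $\bar\cm$ is a dualizing $k$-variety.

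The only genuinely non-routine step is the second direction of the module-identification, where the additive-quotient relation $[\cp]$ must be absorbed into a second generator coming from a right $\cp$-approximation; I expect this to be the main obstacle, but functorial finiteness of $\cp$ in $\cm$ makes it work. Everything else is a direct transport of definitions through $D$ and pointwise evaluation.
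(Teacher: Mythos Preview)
Your proof is correct and follows essentially the same approach as the paper's own proof: both identify $\mod\frac{\cm}{[\cp]}$ with the finitely presented $\cm$-modules vanishing on $\cp$ (using functorial finiteness to absorb the ideal $[\cp]$ into an extra generator coming from an approximation), and then conclude via the commutative square relating $D$ on $\mod\cm$ and on $\mod\frac{\cm}{[\cp]}$. The paper's version is terser---it only states that representable $\frac{\cm}{[\cp]}$-functors are finitely presented over $\cm$ and then asserts the equivalence---while you spell out the approximation step explicitly, but the underlying argument is the same.
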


\begin{proof}
Since $\cp$ is a functorially finite subcategory of $\cm$, it follows that the representable functors of $\frac{\cm}{[\cp]}$ (respectively, $(\frac{\cm}{[\cp]})^{\op}$) are finitely presented as $\cm$-modules (respectively, as $\cm^{\op}$-modules). One checks that an $\frac{\cm}{[\cp]}$-module (respectively, $(\frac{\cm}{[\cp]})^{\op}$-module) is finitely presented as an $\frac{\cm}{[\cp]}$-module (respectively, $(\frac{\cm}{[\cp]})^{\op}$-module) if and only if it is finitely presented as an $\cm$-module (respectively, $\cm^{\op}$-module). Therefore we have a commutative diagram
\[
\xymatrix{
\mod\frac{\cm}{[\cp]}\ar[d]\ar@{<->}[rr]^D && \mod (\frac{\cm}{[\cp]})^{\op}\ar[d]\\
\mod\cm\ar@{<->}[rr]^D && \mod \cm^{\op},
}
\]
showing that $\frac{\cm}{[\cp]}$ is a dualizing $k$-variety.
\end{proof}

\subsection{The Amiot--Guo--Keller cluster category of a Calabi--Yau triple}\label{ss:AGK-cluster-category}
Assume that  $(\ct,\ct^{\fd},\cm)$ is a $(d+1)$-Calabi--Yau triple. We keep the notation in Section~\ref{ss:setup}.
Consider the triangle quotient 
\[
\cc:=\ct/\ct^{\fd},
\]
which we call \emph{Amiot--Guo--Keller cluster category} of $\ct$. 
Let $\pi\colon\ct\rightarrow\cc$ denote the canonical projection functor. 
We define a full subcategory $\cf$ of $\ct$ by
\[\cf:=\ct_{\geq1-d}\cap\ct_{\leq 0}\stackrel{{\rm Prop. \ref{from silting to co-t-structure}(b)}}{=}\cm*\cm[1]*\cdots*\cm[d-1].\]

Now we give the following generalisation of fundamental results due to Amiot and Guo \cite{Amiot09,Guolingyan11a} to our setting of $(d+1)$-Calabi--Yau triples. 
In particular, the statement (b) says that $\cf$ is a fundamental domain of $\cc$ in $\ct$. We observe that a hidden key point of the proofs in \cite{Amiot09,Guolingyan11a} is the existence of right and left adjacent t-structures in (CY3) and (CY3$^{\op}$). This motivates our study in Section~\ref{section:adjacent t-structures} and enables us to make the generalisation.

\begin{theorem}\label{amiot}
\begin{itemize}
\item[(a)] The category $\cc$ is a $d$-Calabi--Yau triangulated category. 
\item[(b)] The functor $\pi\colon\ct\to\cc$ restricts to an equivalence $\cf\to\cc$ of additive categories.
\item[(c)] $\pi(\cm)$ is a $d$-cluster-tilting subcaegory of $\cc$ and $\pi\colon\cm\to\pi(\cm)$ is an equivalence.
\end{itemize}
\end{theorem}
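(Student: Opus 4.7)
My plan is to prove (b)---that $\pi|_{\cf}\colon\cf\to\cc$ is an equivalence of additive categories---and deduce (c) and (a) from it. The key tools will be the two adjacent t-structures provided by Theorem~\ref{left-right}, the iterated form of Proposition~\ref{one step} giving $\ct_{\ge -\ell}\subset S^{-1}(\ch)[\ell]*\cdots*S^{-1}(\ch)[1]*\ct_{\ge 0}$ with $S^{-1}(\ch)\subset\ct^{\fd}$, and the relative $(d+1)$-Calabi--Yau duality.

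For density, given $Z\in\ct$, the triangle of the right adjacent t-structure $\sigma^{\le 0}Z\to Z\to\sigma^{\ge 1}Z$ has cofiber in $\ct^{\ge 1}\subset\ct^{\fd}$, so $\pi Z\simeq\pi\sigma^{\le 0}Z$ in $\cc$ and the problem reduces to $Z\in\ct_{\le 0}=\bigcup_n\cm*\cm[1]*\cdots*\cm[n]$. I then inductively reduce the upper shift $n$ to $d-1$: each application of Proposition~\ref{one step} absorbs a piece in $S^{-1}(\ch)\subset\ct^{\fd}$ while moving the object by one step in the $\ct_{\ge\bullet}$-filtration; after sufficiently many such steps a representative lies inside $\ct_{\le 0}\cap\ct_{\ge 1-d}=\cf$.

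For fully faithfulness, I will show that for $X,Y\in\cf$ the direct-limit description $\Hom_\cc(\pi X,\pi Y)=\varinjlim\Hom_\ct(X,Y')$ over roofs $Y\to Y'$ with cone in $\ct^{\fd}$ stabilizes at $\Hom_\ct(X,Y)$. Since the naive orthogonality $\Hom_\ct(\ct^{\fd},Y)=0$ fails for $Y\in\cf$, I will instead restrict to a cofinal subsystem whose cones $C\in\ct^{\fd}$ satisfy $\Hom_\ct(C,Y)=0=\Hom_\ct(C[-1],Y)$. By Lemma~\ref{l:t-str-restricts-to-fd-objects} each such $C$ is a finite extension of shifts of the heart $\ch$, and the $(d+1)$-Calabi--Yau duality $D\Hom_\ct(C,Y)\simeq\Hom_\ct(Y,C[d+1])$ combined with the defining constraints of $\cf$ supplies the required vanishings. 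This is the technical heart of the argument, and is where the present proof improves upon those of Amiot and Guo by exploiting the left-right symmetry of adjacent t-structures from Theorem~\ref{left-right}.

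With (b) established, (c) follows quickly: since $\cm$ and $\cm[i]$ are contained in $\cf$ for $0\le i\le d-1$, fully faithfulness converts the presilting condition on $\cm$ into $\Hom_\cc(\pi\cm,\pi\cm[i])=0$ for $1\le i\le d-1$, giving $d$-rigidity, and density applied to $\cf=\cm*\cm[1]*\cdots*\cm[d-1]$ yields $\cc=\pi\cm*\pi\cm[1]*\cdots*\pi\cm[d-1]$, completing the $d$-cluster-tilting condition. For (a), Hom-finiteness of $\cc$ is inherited from $\cf\subset\ct$ through $\pi|_{\cf}$. The $d$-Calabi--Yau duality $D\Hom_\cc(X,Y)\simeq\Hom_\cc(Y,X[d])$ for $X,Y\in\cf$ is then derived by producing a canonical representative $X^\vee\in\cf$ of $\pi(X[d])$ via the same truncation technique used in density, and transferring the $(d+1)$-Calabi--Yau pairing of $(\ct,\ct^{\fd})$ through the resolution triangle with cone in $\ct^{\fd}$.
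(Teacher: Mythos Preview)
Your plan for (b) and (c) is essentially the paper's argument. Density via iterated use of Proposition~\ref{one step} is exactly what the paper does (packaged as Proposition~\ref{fundamental domain2}), and (c) follows from full faithfulness on $\cf$ just as you describe. One cosmetic slip: in your full-faithfulness sketch you write the colimit over right roofs $Y\to Y'$ but then impose the vanishing conditions $\Hom_\ct(C,Y)=0=\Hom_\ct(C[-1],Y)$, which are the ones relevant to \emph{left} roofs $X\xleftarrow{s}Z$. The paper uses left roofs (Lemma~\ref{l:adjusting-roof}) and shows that the t-structure truncation $\sigma^{\le0}$ applied to the cone produces a cofinal family with cone in $\ct_{\le0}\cap\ct^{\fd}$; you should make this replacement step explicit rather than merely asserting cofinality. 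Also, the improvement over Amiot--Guo via Theorem~\ref{left-right} and Proposition~\ref{one step} lives in the \emph{density} argument, not in full faithfulness---the latter (Proposition~\ref{fundamental domain}) uses only the right-adjacent t-structure and relative Serre duality.

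Your sketch for the $d$-Calabi--Yau property in (a) has a genuine gap. You propose to choose $X^\vee\in\cf$ with $\pi(X^\vee)\simeq\pi(X[d])$ and then ``transfer the $(d+1)$-Calabi--Yau pairing of $(\ct,\ct^{\fd})$ through the resolution triangle with cone in $\ct^{\fd}$''. But the relative duality $D\Hom_\ct(W,Y)\simeq\Hom_\ct(Y,W[d+1])$ is only available when $W\in\ct^{\fd}$; for arbitrary $X\in\cf$ there is no direct isomorphism $D\Hom_\ct(X,Y)\simeq\Hom_\ct(Y,X[d+1])$ to transfer. A triangle $X[d]\to X^\vee\to C$ with $C\in\ct^{\fd}$ lets you compare $\Hom_\ct(Y,X[d])$ with $\Hom_\ct(Y,X^\vee)$, but neither of these is $D\Hom_\ct(X,Y)$. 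The paper handles this by a different mechanism: for each pair $(X,Y)$ one shows that $X\to\sigma^{\ge i}X$ (for suitable $i$) is a \emph{local $\ct^{\fd}$-envelope of $X$ relative to $Y$} in Amiot's sense, and then invokes \cite[Lemma~1.1, Theorem~1.3, Proposition~1.4]{Amiot09}, which build the bifunctorial Serre duality on $\cc$ from such envelopes. You need either to reproduce that machinery or to find an alternative route (for instance, showing that the inclusion $\ct^{\fd}\hookrightarrow\ct$ admits both adjoints and deducing a Serre functor on the quotient); your current outline does not do either.
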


The following proposition will play an important role in the proof of Theorem~\ref{amiot} and Theorem~\ref{main theorem new}.

\begin{proposition}\label{fundamental domain}
The functor $\pi\colon\ct\to\cc$ induces a bijection (respectively, injection) $\Hom_{\ct}(U,V)\rightarrow\Hom_{\cc}(U,V)$ for any $U\in\ct_{\le0}$ and $V\in\ct_{\ge1-d}$ (respectively, $V\in\ct_{\ge-d}$). Consequently, it restricts to a fully faithful functor $\cf\to\cc$. \end{proposition}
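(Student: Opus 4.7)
My plan is to handle injectivity and surjectivity separately, with both ultimately resting on the vanishing
\[
(\cm*\cm[1]*\cdots*\cm[d])\cap\ct^{\fd}=0,
\]
which contains $\cf\cap\ct^{\fd}=0$ as a special case. This will follow from Serre duality: for $Y$ in the intersection one has $Y\in\ct_{\ge-d}$ and $Y[d+1]\in\ct_{\le-d-1}$, so the co-t-structure of Proposition~\ref{from silting to co-t-structure} yields $\Hom_\ct(Y,Y[d+1])=0$, and then (CY2) forces $\End_\ct(Y)=D\Hom_\ct(Y,Y[d+1])=0$, hence $Y=0$.

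For injectivity, assuming $f\colon U\to V$ satisfies $\pi(f)=0$, I will factor $f=hg$ through some $X\in\ct^{\fd}$. Since $U\in\ct^{\le 0}=\ct_{\le 0}$, the right adjacent t-structure truncation replaces $X$ by $\sigma^{\le 0}X\in\ct^{\fd}\cap\ct^{\le 0}$, through which $g$ factors. The left adjacent t-structure $({}^{\perp_\ct}\cm[<\hspace{-3pt}0],{}^{\perp_\ct}\cm[>\hspace{-3pt}0])$ exists by Theorem~\ref{left-right}; denote by $\tau^{\ge-d}$ its truncation functor at level $\ge-d$. Using $V\in\ct_{\ge-d}={}^{\perp_\ct}\cm[<\hspace{-3pt}-d]$, the induced map $\sigma^{\le 0}X\to V$ factors further through $Y:=\tau^{\ge-d}(\sigma^{\le 0}X)\in\ct^{\fd}$. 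The inclusion $({}^{\perp_\ct}\cm[<\hspace{-3pt}0])[d+1]\subseteq\cm[<\hspace{-3pt}0]^{\perp_\ct}=\ct^{\le 0}$ from Theorem~\ref{left-right} guarantees that $Y$ stays inside $\ct^{\le 0}$, so $Y\in\ct_{\le 0}\cap\ct_{\ge-d}\cap\ct^{\fd}=0$, and therefore $f=0$ in $\ct$.

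For surjectivity, I will represent $\varphi\in\Hom_\cc(U,V)$ by a roof $U\xrightarrow{f}W\xleftarrow{s}V$ with distinguished triangle $V\xrightarrow{s}W\xrightarrow{p}S\to V[1]$ and $S\in\ct^{\fd}$, and refine this roof so that its cone sits in $\ct^{\ge 1}$. The key step is the vanishing $\Hom_\ct(\sigma^{\le 0}S,V[1])=0$; Serre duality transforms this into $\Hom_\ct(V[1],\sigma^{\le 0}S[d+1])=0$, which holds because $V[1]\in\ct_{\ge-d}$ (this is where $V\in\ct_{\ge 1-d}$ is used) and $\sigma^{\le 0}S[d+1]\in\ct_{\le-d-1}$, so the co-t-structure kills it. This vanishing allows lifting the inclusion $\iota\colon\sigma^{\le 0}S\to S$ along $p$ to some $\psi\colon\sigma^{\le 0}S\to W$ with $p\psi=\iota$. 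Applying the octahedron axiom to the composable pair $\sigma^{\le 0}S\xrightarrow{\psi}W\xrightarrow{p}S$, whose composite is $\iota$ with cone $\sigma^{\ge 1}S$, then produces an object $W'$ fitting in triangles $V\to W'\to\sigma^{\ge 1}S\to V[1]$ and $\sigma^{\le 0}S\to W\to W'\to\sigma^{\le 0}S[1]$. Composing $f$ with $W\to W'$ gives a new roof representing the same $\varphi$ whose cone $\sigma^{\ge 1}S$ lies in $\ct^{\ge 1}$. Since $\Hom_\ct(U,\sigma^{\ge 1}S)=0$, the new map $U\to W'$ factors through $V\to W'$, producing an $h\in\Hom_\ct(U,V)$ with $\pi(h)=\varphi$.

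The fully faithful conclusion for $\cf\to\cc$ is then immediate, because $\cf=\cm*\cm[1]*\cdots*\cm[d-1]\subseteq\ct_{\le 0}\cap\ct_{\ge 1-d}$ by Proposition~\ref{from silting to co-t-structure}(b), placing each pair $U,V\in\cf$ inside the bijection range. The main obstacle I anticipate is the surjectivity step, where one must simultaneously secure the lifting $\psi$ and carry out the octahedral swap that replaces $S$ by $\sigma^{\ge 1}S$; the hypothesis $V\in\ct_{\ge 1-d}$ enters exactly through the shift $V[1]\in\ct_{\ge-d}$ landing in the co-t-structure vanishing region, which explains why surjectivity needs the stronger hypothesis while injectivity still survives under the weaker $V\in\ct_{\ge-d}$.
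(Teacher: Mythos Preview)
Your proof is correct, but your route for injectivity is more elaborate than the paper's. After factoring $f$ through $\sigma^{\le 0}X\in\ct^{\fd}\cap\ct_{\le 0}$, the paper stops and applies Serre duality directly: since $\sigma^{\le 0}X[d+1]\in\ct_{\le -d-1}$ and $V\in\ct_{\ge -d}$, one gets
\[
\Hom_\ct(\sigma^{\le 0}X,V)\simeq D\Hom_\ct(V,\sigma^{\le 0}X[d+1])=0
\]
immediately, so $f=0$. You instead invoke the left adjacent t-structure (Theorem~\ref{left-right}) to truncate a second time and then kill the resulting object via your vanishing lemma $(\cm*\cdots*\cm[d])\cap\ct^{\fd}=0$. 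This works, but it imports a nontrivial result (Theorem~\ref{left-right}) that the paper's argument does not need. Note also a small slip: you write $\ct_{\ge -d}={}^{\perp_\ct}\cm[<\hspace{-3pt}-d]$, but the correct identity is $\ct_{\ge -d}={}^{\perp_\ct}\cm[>\hspace{-3pt}-d]$; your argument is unaffected because what you actually use is the t-structure orthogonality $\Hom_\ct(\tau^{\le -d-1}(-),V)=0$ for $V$ in the coaisle.

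For surjectivity the two proofs are essentially mirror images: the paper represents morphisms by left roofs $U\xleftarrow{s}Z\xrightarrow{f}V$ and refines so that the cone of $s$ lies in $\ct_{\le 0}\cap\ct^{\fd}$ (their Lemma~\ref{l:adjusting-roof}), then applies Serre duality to show $\Hom_\ct(W[-1],V)=0$; you use right roofs and refine so that the cone lies in $\ct^{\ge 1}$, then use $\Hom_\ct(U,\ct^{\ge 1})=0$. The key vanishing $\Hom_\ct(\sigma^{\le 0}S,V[1])=0$ in your argument is exactly the Serre-dual of the paper's vanishing. Both arguments hinge on the same octahedral replacement of the cone by a truncation; neither is simpler than the other.
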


In particular, for $M,N\in\cm$, we have isomorphisms $\Hom_\ct(M,N[i])\simeq\Hom_\cc(M,N[i])$ for all $i\leq d-1$.
To prove this proposition we need the following lemma.

\begin{lemma}\label{l:adjusting-roof}
Let $X\in\ct_{\leq 0}$ and $Y\in\ct$. Then any morphism in $\Hom_\cc(X,Y)$ has a representative of the form $X\xleftarrow{s}Z\xrightarrow{f}Y$ such that the cone of $s$ belongs to $\ct_{\leq 0}\cap\ct^{\fd}$.
\end{lemma}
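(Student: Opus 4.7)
The plan is to start with an arbitrary representative roof $X \xleftarrow{s} Z \xrightarrow{f} Y$ of the given morphism, sitting in a triangle
\[
Z \xrightarrow{s} X \xrightarrow{g} W \to Z[1] \qquad (W \in \ct^{\fd}),
\]
and to improve $s$ by truncating $W$ with respect to the t-structure $(\ct^{\le 0},\ct^{\ge 0})$, exploiting the fact that $X \in \ct_{\le 0} = \ct^{\le 0}$. Concretely, using Lemma~\ref{l:t-str-restricts-to-fd-objects}, I have a truncation triangle
\[
\sigma^{\le 0}W \xrightarrow{\alpha} W \xrightarrow{\beta} \sigma^{\ge 1}W \to (\sigma^{\le 0}W)[1]
\]
in which both $\sigma^{\le 0}W$ and $\sigma^{\ge 1}W$ lie in $\ct^{\fd}$. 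The key observation is that $\Hom_\ct(\ct^{\le 0},\ct^{\ge 1}) = 0$ forces $\beta g = 0$, so $g$ factors as $g = \alpha g'$ for a unique $g' \colon X \to \sigma^{\le 0} W$.

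Next I would complete $g'$ to a triangle
\[
Z' \xrightarrow{s'} X \xrightarrow{g'} \sigma^{\le 0}W \to Z'[1],
\]
whose cone $\sigma^{\le 0}W$ is by construction in $\ct^{\le 0} \cap \ct^{\fd} = \ct_{\le 0} \cap \ct^{\fd}$, meeting the target condition. Applying the octahedral axiom to the composition $g = \alpha g'$ produces a morphism $u \colon Z' \to Z$ fitting in a triangle
\[
Z' \xrightarrow{u} Z \to (\sigma^{\ge 1}W)[-1] \to Z'[1]
\]
and satisfying $s \circ u = s'$ as part of the octahedral data. Since $(\sigma^{\ge 1}W)[-1] \in \ct^{\fd}$, the morphism $u$ becomes invertible in $\cc = \ct/\ct^{\fd}$. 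Therefore the roof $X \xleftarrow{s'} Z' \xrightarrow{fu} Y$ represents the same morphism in $\cc$ as $(s,f)$ (the comparison witnessed by the pair $(u,\mathrm{id}_{Z'})$), and its left leg $s'$ now has cone in $\ct_{\le 0} \cap \ct^{\fd}$, as required.

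No step looks to be a serious obstacle: the factorization of $g$ through $\sigma^{\le 0}W$ is forced by the orthogonality built into the t-structure, and the rest is a single application of the octahedron together with the preservation of $\ct^{\fd}$ under the truncation functors supplied by Lemma~\ref{l:t-str-restricts-to-fd-objects}. The mildest point to verify carefully is that the octahedral comparison really provides the equality $s \circ u = s'$ (so that the resulting roof is genuinely equivalent to the original one), but this is part of the standard statement of the octahedron.
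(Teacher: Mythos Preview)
Your argument is correct and follows essentially the same line as the paper: factor $g$ through $\sigma^{\le 0}W$ using $X\in\ct^{\le 0}$, complete the factored map to a triangle, and compare roofs via the resulting map $Z'\to Z$. The only cosmetic difference is that the paper obtains the comparison map $h:Z'\to Z$ (your $u$) directly from a morphism of triangles (TR3), whereas you invoke the full octahedron; both yield the needed identity $s\circ u=s'$ and the equivalence of roofs.
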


\begin{proof}
Any morphism $X\to Y$ in $\cc$ can be written as
$X\xleftarrow{s}Z\xrightarrow{f}Y$ such that there exists a triangle
\[\xymatrix{Z\ar[r]^s&X\ar[r]^t&W\ar[r]&Z[1]}\]
with $W\in\ct^{\fd}$. Recall that $\ct_{\leq 0}=\ct^{\leq 0}$. Thus $t$ factors through
$\sigma^{\leq 0}W\to W$ since $\Hom_{\ct}(X,\sigma^{\geq 1}W)=0$.
We obtain a commutative diagram of triangles:
\[\xymatrix{
&&\sigma^{\geq 1}W\\
Z\ar[r]^s&X\ar[r]^t&W\ar[r]\ar[u]&Z[1]\\
Z'\ar[r]^{sh}\ar[u]^h&X\ar[r]\ar@{=}[u]&\sigma^{\le0}W\ar[r]\ar[u]&Z'[1]\ar[u]
}\]
Because the cone $\sigma^{\leq 0}W$ of $sh$ belongs to $\ct_{\leq 0}\cap\ct^{\fd}$ by Lemma~\ref{l:t-str-restricts-to-fd-objects}, the morphism
$X\xleftarrow{s}Z\xrightarrow{f}Y$ is equivalent to $X\xleftarrow{sh}Z'\xrightarrow{fh}Y$, so the assertion follows.
\end{proof}

\begin{proof}[Proof of Proposition \ref{fundamental domain}]
Let $U\in\ct_{\leq 0}$ and $V\in\ct_{\geq-d}$. 

First we show that $\Hom_{\ct}(U,V)\to\Hom_{\cc}(U,V)$ is injective.
Assume that $f\in\Hom_{\ct}(U,V)$ becomes zero in $\cc$.
Then it factors through some $W\in\ct^{\fd}$ (by, for example, \cite[Lemma 2.1.26]{Neeman01b}), and further through $\sigma^{\le0}W$ because $U\in\ct_{\leq 0}$.
By the relative $(d+1)$-Calabi--Yau property, we have
\[\Hom_{\ct}(\sigma^{\le0}W,V)\simeq D\Hom_{\ct}(V,(\sigma^{\le0}W)[d+1])=0\]
as $V\in\ct_{\ge-d}$. Thus $f$ must be zero.

Next we show that $\Hom_{\ct}(U,V)\to\Hom_{\cc}(U,V)$ is surjective if $V\in\ct_{\geq1-d}$.
By Lemma~\ref{l:adjusting-roof}, a morphism in $\Hom_\cc(U,V)$ has a representative of the form  $U\xleftarrow{s}Y\xrightarrow{f}V$ such that the cone $W$ of $s$ belongs to $\ct_{\leq 0}\cap\ct^{\fd}$.
We have an exact sequence
\[
\Hom_{\ct}(U,V)\xrightarrow{s}\Hom_{\ct}(Y,V)\to\Hom_{\ct}(W[-1],V).
\]
As $W[-1]\in\ct^{\fd}$, we can apply the relative $(d+1)$-Calabi--Yau property to obtain
\[
\Hom_{\ct}(W[-1],V)\simeq D\Hom_{\ct}(V,W[d])=0.
\]
The last equality holds because $V\in\ct_{\ge1-d}$ and $W[d]\in\ct_{\leq-d}$. So there exists $g\in\Hom_\ct(U,V)$ such that $f=gs$, and hence $U\xleftarrow{s}Y\xrightarrow{f}V$ is equivalent to $U\xrightarrow{g}V$. It follows that $\Hom_{\ct}(U,V)\to\Hom_{\cc}(U,V)$ is surjective.
\end{proof}

We also need the following observation.

\begin{lemma}\label{image of sigma le0}
We have $\sigma^{\le0}(\ct_{\ge1-d})\subset\cf$.
\end{lemma}

\begin{proof}
We need to show $\sigma^{\le0}X\in\ct_{\ge1-d}$, that is, $\Hom_{\ct}(\sigma^{\le0}X,\cm[\ge\hspace{-3pt} d])=0$.
Consider the triangle 
\[
\xymatrix{\sigma^{\le0}X\ar[r]& X\ar[r] & \sigma^{\ge1}X\ar[r] & (\sigma^{\le0}X)[1].}
\]
Applying $\Hom_\ct(-,\cm[\ge\hspace{-3pt} d])$, we have an exact sequence
\[\Hom_\ct(X,\cm[\ge\hspace{-3pt} d])\to\Hom_\ct(\sigma^{\le 0}X,\cm[\ge\hspace{-3pt} d])\to\Hom_\ct((\sigma^{\ge1}X)[-1],\cm[\ge\hspace{-3pt} d]).\]
Since $X\in\ct_{\ge1-d}$, we have $\Hom_\ct(X,\cm[\ge\hspace{-3pt} d])=0$.
Moreover $\Hom_\ct((\sigma^{\ge1}X)[-1],\cm[\ge\hspace{-3pt} d])\simeq D\Hom_\ct(\cm,(\sigma^{\ge1}X)[\le\hspace{-3pt}0])=0$.
Thus the assertion follows.
\end{proof}

Now we are ready to prove Theorem~\ref{amiot}.

\begin{proof}[Proof of Theorem \ref{amiot}]
(b) The functor $\cf\rightarrow\cc$ is fully faithful by Proposition \ref{fundamental domain}. It remains to show that it is dense. Let $X$ be any object of $\cc$ and view it as an object of $\ct$.
By (CY3) and (CY3$^{\op}$), we have t-structures $(\cm[<\hspace{-3pt}0]^{\perp_{\ct}},\cm[>\hspace{-3pt}0]^{\perp_{\ct}})$ and $({}^{\perp_{\ct}}\cm[<\hspace{-3pt}d],{}^{\perp_{\ct}}\cm[>\hspace{-3pt}d])$ on $\ct$ satisfying $\cm[>\hspace{-3pt}0]^{\perp_{\ct}}\subset\ct^{\fd}$ and ${}^{\perp_{\ct}}\cm[<\hspace{-3pt}d]\subset\ct^{\fd}$.
The second t-structure gives a triangle
\[\xymatrix{Y\ar[r]& X\ar[r]& Z\ar[r]& Y[1]}\]
with $Y\in{}^{\perp_{\ct}}\cm[\leq \hspace{-3pt}d]$ and $Z\in{}^{\perp_{\ct}}\cm[>\hspace{-3pt}d]=\ct_{\ge1-d}$. The first t-structure gives a triangle
\[\xymatrix{\sigma^{\le0}Z\ar[r]& Z\ar[r]&\sigma^{\ge1}Z\ar[r]&(\sigma^{\le0}Z)[1]}\]
with $\sigma^{\le0}Z\in\cm[<\hspace{-3pt}0]^{\perp_{\ct}}$ and $\sigma^{\ge1}Z\in\cm[\ge\hspace{-3pt}0]^{\perp_{\ct}}$.
Then $\sigma^{\le0}Z\in\sigma^{\le0}(\ct_{\ge1-d})\subset\cf$ holds by Lemma \ref{image of sigma le0}. Since both $Y$ and $\sigma^{\ge1}Z$ belong to $\ct^{\fd}$, we have $X\simeq Z\simeq\sigma^{\le0}Z\in\cf$ in $\cc$.
Thus the assertion follows.

(a) First, by (b), the category $\cc$ is Hom-finite.

Secondly, we show that $\cc$ is $d$-Calabi--Yau. Let $X$ and $Y$ be objects of $\ct$.
Recall that $(\ct_{\geq 0},\ct_{\leq 0})$ is a bounded co-t-structure on $\ct$. It follows that there exists an integer $i$ such that $Y$ belongs to $\ct_{\geq i}$.
Now consider the triangle
\[
\xymatrix{
\sigma^{\leq i-1}X\ar[r] & X\ar[r] & \sigma^{\geq i}X\ar[r] &(\sigma^{\leq i-1}X)[1].
}
\]
Because $\sigma^{\leq i-1}X\in \ct^{\leq i-1}=\ct_{\leq i-1}$, we have $\Hom_\ct(Y,\sigma^{\leq i-1}X)=0$. It follows that the induced homomorphism $\Hom_{\ct}(Y,X)\rightarrow\Hom_{\ct}(Y,\sigma^{\geq i}X)$ is injective.
So the morphism $X\rightarrow \sigma^{\geq i}X$ is a local $\ct^{\fd}$-envelope of $X$ relative to $Y$ in the sense of \cite[Definition 1.2]{Amiot09}.
Therefore by \cite[Lemma 1.1, Theorem 1.3 and Proposition 1.4]{Amiot09} we see that $\cc$ is $d$-Calabi--Yau.

(c) As all $\cm[i]$, $0\leq i\leq d-1$ belong to $\cf$, we have by Proposition \ref{fundamental domain} that $\pi\colon\cm\to\pi(\cm)$ is an equivalence, and $\Hom_\cc(\cm,\cm[i])\simeq \Hom_\ct(\cm,\cm[i])=0$ for $1\leq i\leq d-1$, i.e. $\cm$ is $d$-rigid.
Since $\cf=\cm*\cm[1]*\cdots*\cm[d-1]$ by definition and $\pi\colon\cf\to\cc$ is dense, we have $\cc=\pi(\cm)*\pi(\cm)[1]*\cdots*\pi(\cm)[d-1]$. Thus $\pi(\cm)$ is a $d$-cluster-tilting subcategory of $\cc$.
\end{proof}

We end this subsection with the observation below, where the $d=2$ case of part (b) is due to Keller and Nicol\'as \cite{KellerNicolas11} in the algebraic case, see also \cite[Theorem 4.5]{BY}.
Let
\[\silt^{\cf}\ct:=\{\cn\in\silt\ct\mid\cn\subset\cf\}.\]
Let $\dctilt\cc$ be the class of $d$-cluster-tilting subcategories of $\cc$, where we identify two $d$-cluster-tilting subcategories $\NN$ and $\cn'$ of $\cc$ when $\add\NN=\add\cn'$.

\begin{corollary}\label{2term-silting and cluster-tilting}
If $\cm=\add M$ for some silting object $M$ of $\ct$, then the following statements hold.
\begin{itemize}
\item[(a)] The functor $\pi\colon\ct\to\cc$ gives a map $\pi\colon\silt\ct\to\dctilt\cc$.
\item[(b)] The map in (a) restricts to an injection $\pi\colon\silt^{\cf}\ct\to\dctilt\cc$, which is a bijection if $d=1$ or $d=2$.
\end{itemize}
\end{corollary}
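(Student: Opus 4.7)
The plan is to treat parts (a) and (b) separately. For part (a), given a silting subcategory $\cn$ of $\ct$, I would verify the two defining properties of a $d$-cluster-tilting subcategory for $\pi(\cn)\subseteq\cc$. Since shifting $\cn$ preserves both being silting and being mapped to a $d$-cluster-tilting, we may assume $\cn\subseteq\ct_{\le 0}$. For the $d$-rigidity $\Hom_\cc(N,N'[i])=0$ with $N,N'\in\cn$ and $1\le i\le d-1$, any morphism is represented by a roof $N\xleftarrow{s}X\xrightarrow{f}N'[i]$ whose cone lies in $\ct^{\fd}$; by a truncation argument along the t-structure $(\ct^{\le 0},\ct^{\ge 0})$ analogous to Lemma~\ref{l:adjusting-roof}, we may assume $X\in\ct_{\le 0}$, and then the silting identity $\Hom_\ct(\cn,\cn[>\hspace{-3pt}0])=0$ combined with the relative $(d+1)$-Calabi--Yau duality forces the roof to vanish in $\cc$. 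For the covering $\cc=\pi(\cn)*\pi(\cn)[1]*\cdots*\pi(\cn)[d-1]$, I would use Theorem~\ref{amiot}(b) to reduce to showing that the fundamental domain $\cf$ lies inside this iterated extension, then use the bounded co-t-structure attached to $\cn$ to write each such object in terms of shifts of $\cn$, and finally collapse shifts outside the range $[0,d-1]$ using the rigidity just established and the $d$-Calabi--Yau property of $\cc$.

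For the injectivity in part (b), suppose $\cn_1,\cn_2\in\silt^{\cf}\ct$ satisfy $\pi(\cn_1)=\pi(\cn_2)$ as subcategories of $\cc$. Since both $\cn_1$ and $\cn_2$ are contained in $\cf$ and the restriction of $\pi$ to $\cf$ is an additive equivalence onto $\cc$ by Theorem~\ref{amiot}(b), we immediately obtain $\cn_1=\cn_2$ inside $\ct$.

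For the bijectivity when $d=1$ or $d=2$, I would construct an explicit inverse: given $\cp\in\dctilt\cc$, lift each indecomposable $P\in\cp$ to the unique $\tilde P\in\cf$ with $\pi(\tilde P)\simeq P$ via the equivalence in Theorem~\ref{amiot}(b), and set $\tilde\cp:=\add\{\tilde P\mid P\in\cp\}$. For $d=1$ we have $\cf=\cm$ and the only $1$-cluster-tilting subcategory of $\cc$ is $\cc$ itself, making bijectivity trivial. For $d=2$, following Keller--Nicol\'as, I would verify that $\tilde\cp\subseteq\cm*\cm[1]$ is silting in $\ct$: the vanishing $\Hom_\ct(\tilde\cp,\tilde\cp[i])=0$ for $i\ge 2$ is immediate from the silting property of $\cm$ applied to the triangles expressing elements of $\tilde\cp$ as extensions in $\cm*\cm[1]$, and for $i=1$ it follows from the $2$-rigidity of $\cp$ via the injectivity part of Proposition~\ref{fundamental domain}; the generation $\thick\tilde\cp=\ct$ follows because $\cc=\cp*\cp[1]$ lifts, through the equivalence $\pi\colon\cf\to\cc$, to place $\cf$ inside $\thick\tilde\cp$, which in turn generates $\ct$ thickly via the bounded co-t-structure $(\ct_{\ge 0},\ct_{\le 0})$ attached to $\cm$.

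The main obstacle lies in part (a): a general silting subcategory $\cn$ need not fit into any window of length $d$ of shifts of $\cm$, so direct application of Proposition~\ref{fundamental domain} does not suffice to produce the rigidity of $\pi(\cn)$ in $\cc$. The rigidity must instead be deduced from a careful analysis of roofs through $\ct^{\fd}$ using the full relative $(d+1)$-Calabi--Yau property together with the truncation functors of the adjacent t-structure supplied by condition (CY3).
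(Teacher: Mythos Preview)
Your approach to part (a) misses the key shortcut that makes the paper's proof a one-liner. Since $\cm=\add M$ for a silting object $M$, any silting subcategory $\cn\in\silt\ct$ is automatically of the form $\add N$ for a silting object $N$ (this follows from the Grothendieck group argument in \cite[Theorem~2.27]{AI}), and therefore $\cn$ is compatible with $\cm$. Remark~\ref{idependence of M} then says that $(\ct,\ct^{\fd},\cn)$ is again a $(d+1)$-Calabi--Yau triple, and Theorem~\ref{amiot} applied to this new triple immediately gives that $\pi(\cn)$ is $d$-cluster-tilting in $\cc$. The ``main obstacle'' you identify --- that $\cn$ need not sit in a window of length $d$ of shifts of $\cm$ --- simply evaporates: one never needs to compare $\cn$ to $\cm$ at all, because the whole machinery of Section~\ref{ss:AGK-cluster-category} is invariant under change of silting object. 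Your roof-and-truncation plan could perhaps be pushed through, but it is both harder and implicitly relies on the same compatibility fact.

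For part (b) your injectivity and $d=1$ arguments agree with the paper. For $d=2$, however, your generation step has a genuine gap. You want to deduce $\cf\subset\thick\tilde\cp$ by ``lifting'' the decomposition $\cc=\cp*\cp[1]$ through the equivalence $\pi\colon\cf\to\cc$. But $\pi|_\cf$ is only an \emph{additive} equivalence, and $\cf$ is not a triangulated subcategory of $\ct$; a triangle $P_0\to\pi(X)\to P_1[1]\to P_0[1]$ in $\cc$ does not lift to a triangle in $\ct$ with third term in $\tilde\cp[1]$. Concretely, one can lift the map $P_0\to\pi(X)$ to $\tilde P_0\to X$ via Proposition~\ref{fundamental domain}, but the cone $Y$ in $\ct$ only satisfies $\pi(Y)\simeq\pi(\tilde P_1[1])$, i.e.\ $Y$ differs from $\tilde P_1[1]$ by an object of $\ct^{\fd}$, and you have no way to place $\ct^{\fd}$ inside $\thick\tilde\cp$ without already knowing $\tilde\cp$ is silting. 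The paper circumvents this entirely: having shown $\tilde\cp$ is presilting (exactly as you do), it invokes Bongartz completion \cite[Proposition~4.2]{IJY} to produce a silting $\cn'\in\silt^{\cf}\ct$ containing $\tilde\cp$; then $\pi(\cn')$ is $2$-cluster-tilting by part (a) and contains $\cp$, hence equals $\cp$, and the full faithfulness of $\pi|_\cf$ forces $\tilde\cp=\cn'$.
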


\begin{proof}
For any $\cn\in\silt\ct$, it follows from Remark \ref{idependence of M} that $(\ct,\ct^{\fd},\cn)$ is a $(d+1)$-Calabi--Yau triple. 
Thus, by Theorem~\ref{amiot}, $\pi(\cn)$ is a $d$-cluster-tilting subcategory of $\cc$. In this way, we obtain a map $\pi\colon\silt\ct\to\dctilt\cc$.
Since $\pi\colon\cf\to\cc$ is fully faithful by Proposition \ref{fundamental domain}, the induced map $\pi\colon\silt^{\cf}\ct\to\dctilt\cc$ is injective.

We show that it is surjective for $d=1$ and $d=2$. For $d=1$ this is true, since we have $\silt^{\cf}\ct=\{\cm\}$ and $\dctilt\cc=\{\pi(\cm)\}$. Next assume $d=2$.
For a subcategory $\cn$ of $\cf$, assume that $\pi(\cn)$ is a $2$-cluster-tilting subcategory of $\cc$.
Then $\cn$ is a presilting subcategory of $\ct$ since $\Hom_{\ct}(\cn,\cn[\ge\hspace{-3pt}2])=0$ by $\cn\subset\cf$ and
$\Hom_{\ct}(\cn,\cn[1])\to\Hom_{\cc}(\cn,\cn[1])$ is injective by
Proposition \ref{fundamental domain}.
Using Bongartz completion \cite[Proposition 4.2]{IJY}, there exists $\cn'\in\silt^{\cf}\ct$ containing $\cn$.
Since $\pi(\cn')$ is a $2$-cluster-tilting subcategory of $\cc$ containing $\pi(\cn)$, we have $\pi(\cn)=\pi(\cn')$. Therefore $\cn=\cn'$ holds.
\end{proof}

\begin{remark}
Assume $d=2$, and let $M$ be a silting object in $\ct$ and $\Lambda:=\End_{\ct}(M)$.
It is shown in \cite{AIR} that we have a bijection $\twosilt\Lambda\to\twoctilt\cc$, where $\twosilt\Lambda$ denotes the set of 2-term silting objects in $\ck^{\bo}(\proj\Lambda)$.
Thus there is a bijective map $\silt^{\cf}\ct\to\twosilt\Lambda$ making the following diagram of bijective maps commutative
\[\xymatrix{
\silt^{\cf}\ct\ar[rr]\ar[rd]^{\pi}&&\twosilt\Lambda\ar[ld]\\
&\twoctilt\cc
}\]
Under the assumption that $\ct$ is an algebraic triangulated category this is given in \cite{BY}. Note, however, that in this case there is a triangle functor $\ct\rightarrow\ck^{\bo}(\proj\Lambda)$, which induces a bijective map $\silt^{\cf}\ct\to\twosilt\Lambda$ making the above diagram commutative, see \cite[Proposition A.3]{BY} (and Theorem A.7 of the arXiv version of \cite{BY}). In the general setting the  triangle functor $\ct\rightarrow\ck^{\bo}(\proj\Lambda)$ and the direct definition of the map $\silt^{\cf}\ct\to\twosilt\Lambda$ are not available.
\end{remark}

We do not know if the map $\pi\colon\silt^{\cf}\ct\to\dctilt\cc$ in Corollary~\ref{2term-silting and cluster-tilting}(b) is bijective for $d>2$. We conjecture that this is the case.

\begin{conjecture}
The map $\pi\colon\silt^{\cf}\ct\to\dctilt\cc$ in Corollary~\ref{2term-silting and cluster-tilting}(b) is bijective for all $d\geq 1$.
\end{conjecture}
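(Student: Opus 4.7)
The plan is to establish the surjectivity of $\pi$, since its injectivity on $\silt^\cf\ct$ is already contained in Corollary~\ref{2term-silting and cluster-tilting}(b). Given a $d$-cluster-tilting subcategory $\cn\subset\cc$, the equivalence $\pi\colon\cf\xrightarrow{\simeq}\cc$ of Theorem~\ref{amiot}(b) yields a unique subcategory $\widetilde{\cn}\subset\cf$ with $\pi(\widetilde{\cn})=\cn$, and the task is to prove that $\widetilde{\cn}$ is silting in $\ct$.

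First, to see that $\widetilde{\cn}$ is presilting, one verifies $\Hom_{\ct}(\widetilde{\cn},\widetilde{\cn}[i])=0$ for all $i>0$. For $i\ge d$ this is automatic: since $\widetilde{\cn}\subset\cf=\cm*\cm[1]*\cdots*\cm[d-1]$, every contributing Hom-space reduces to some $\Hom_{\ct}(\cm[a],\cm[b])$ with $b-a\ge 1$, which vanishes by the presilting property of $\cm$. For $i=1$, we have $\widetilde{\cn}[1]\in\ct_{\ge -d}$, so Proposition~\ref{fundamental domain} gives an injection $\Hom_{\ct}(\widetilde{\cn},\widetilde{\cn}[1])\hookrightarrow\Hom_{\cc}(\cn,\cn[1])=0$. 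Granted presilting, I would then invoke a Bongartz-type completion in the spirit of \cite[Proposition 4.2]{IJY}, producing a silting subcategory $\cm'\in\silt^{\cf}\ct$ with $\widetilde{\cn}\subseteq\cm'$. The image $\pi(\cm')$ is $d$-cluster-tilting by Theorem~\ref{amiot}(c) applied to the $(d+1)$-Calabi--Yau triple $(\ct,\ct^{\fd},\cm')$ (valid by Remark~\ref{idependence of M}), and contains the maximal subcategory $\cn=\pi(\widetilde{\cn})$; hence $\pi(\cm')=\pi(\widetilde{\cn})$, and the injectivity of $\pi$ on $\silt^{\cf}\ct$ forces $\cm'=\widetilde{\cn}$.

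The decisive obstacle lies in the range $2\le i\le d-1$, which is empty exactly when $d\le 2$. Proposition~\ref{fundamental domain} no longer applies directly, since the target $\widetilde{\cn}[i]$ sits in $\ct_{\ge 1-d-i}$, strictly below $\ct_{\ge -d}$. A natural attempt is to take any $f\in\Hom_{\ct}(\widetilde{\cn},\widetilde{\cn}[i])$, exploit $\pi(f)=0$ in $\cc$ to factor $f=h\circ g$ through some $W\in\ct^{\fd}$, and truncate $W$ along the t-structure $(\ct^{\le 0},\ct^{\ge 0})$ so that $g$ factors through $\sigma^{\le 0}W\in\ct^{\fd}\cap\ct^{\le 0}$; one would then hope to use the relative $(d+1)$-Calabi--Yau duality $\Hom_{\ct}(\sigma^{\le 0}W,\widetilde{\cn}[i])\simeq D\Hom_{\ct}(\widetilde{\cn},(\sigma^{\le 0}W)[d+1-i])$ to translate the obstruction into one computable inside the heart $\ch$, and finally apply iterated versions of Proposition~\ref{one step} to express $\widetilde{\cn}$ as successive extensions of $S^{-1}(\ch)$-shifts, reducing the vanishing to $\Hom_{\cc}(\cn,\cn[j])=0$ for $1\le j\le d-1$.

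Matching all the shift ranges across the triple $(\ct,\ct^{\fd},\cm)$, together with securing the Bongartz-type completion inside $\cf$ for arbitrary $d$, seems to require structural input beyond what the preceding sections supply --- this is presumably why the conjecture is currently confirmed only in the algebraic setting via Keller--Nicol\'as and in the small cases $d\le 2$.
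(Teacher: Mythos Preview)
The statement you are addressing is a \emph{conjecture} in the paper, not a theorem: the paper explicitly says ``We do not know if the map $\pi\colon\silt^{\cf}\ct\to\dctilt\cc$ \ldots\ is bijective for $d>2$'' and then states the conjecture without proof. So there is no proof in the paper for you to match against; your proposal is an honest description of a strategy together with its obstructions, not a proof, and you acknowledge this in your final paragraph.

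That said, your outline is accurate and tracks the paper's argument for the cases that \emph{are} proved. Your treatment of $i\ge d$ (using $\widetilde{\cn}\subset\cf$) and of $i=1$ (using the injectivity in Proposition~\ref{fundamental domain}) is exactly how the paper handles $d=2$ in Corollary~\ref{2term-silting and cluster-tilting}(b), and the subsequent Bongartz completion plus maximality of cluster-tilting subcategories is also the paper's route. You correctly isolate the two genuine gaps for $d\ge 3$: (i) the vanishing $\Hom_\ct(\widetilde{\cn},\widetilde{\cn}[i])=0$ for $2\le i\le d-1$, where Proposition~\ref{fundamental domain} no longer gives an injection because $\widetilde{\cn}[i]\notin\ct_{\ge -d}$; and (ii) the availability of a Bongartz-type completion inside $\cf$, since \cite[Proposition~4.2]{IJY} is specific to the two-term situation. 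Your sketch of an attack on (i) via truncation in $\ct^{\fd}$ and relative Serre duality is reasonable but, as you say, does not close.

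In short: there is no discrepancy to report, because the paper offers no proof. Your proposal is a faithful extension of the paper's $d=2$ argument and a clear diagnosis of why it stalls for larger $d$.
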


\subsection{Silting reduction induces Calabi--Yau reduction}\label{ss:silting-red-vs-cy-red} 
Let $(\ct,\ct^{\fd},\cm)$ be a $(d+1)$-Calabi--Yau triple, as in Section~\ref{ss:setup}. Let $\cp$ be a functorially finite subcategory of $\cm$.

By Theorem \ref{amiot}, $\cc=\ct/\ct^{\fd}$ is a $d$-Calabi--Yau triangulated category and $\pi(\cm)$ is a $d$-cluster-tilting object of $\cc$.
In particular, $\pi(\cp)$ is $d$-rigid. Here $\pi\colon\ct\rightarrow\cc$ is the canonical projection functor. By abuse of notation, we will write $\cm$ and $\cp$ for $\pi(\cm)$ and $\pi(\cp)$.

Analogous to \eqref{P and Z}, we define a subcategory of $\cc$ by 
\[
\cz':={}^{\perp_{\cc}}(\pi(\cp)[1]*\pi(\cp)[2]*\cdots*\pi(\cp)[d-1]).\]
Thus, we can form the Calabi--Yau reduction as explained in Section~\ref{section: Mutation pair}:
\[
\cc_{\cp}:=\frac{\cz'}{[\pi(\cp)]}.
\]
By Theorem~\ref{t:cy-reduction}, the subcategory $\frac{\pi(\cm)}{[\pi(\cp)]}$ in $\cc_{\cp}$ is $d$-cluster-tilting, and by Proposition~\ref{fundamental domain}, we have an equivalence
\begin{equation}\label{End CP}
\frac{\pi(\cm)}{[\pi(\cp)]}\simeq\frac{\cm}{[\cp]}.
\end{equation}

On the other hand, let $\SS:=\thick\cp$, $\cu:=\ct/\SS$ and $\rho\colon\ct\to\cu$ the canonical projection. We consider $\cu^{\fd}:=\ct^{\fd}\cap\SS^{\perp_\ct}$ as a full subcategory of $\cu$.
Then $(\cu,\cu^{\fd},\rho(\cm))$ is a relative $(d+1)$-Calabi--Yau triple by Theorem~\ref{p:silting-reduction-is-3cy}, and  the triangle quotient 
\[
\cu/\cu^{\fd}
\]
is a $d$-Calabi--Yau triangulated category by Theorem~\ref{amiot}.
Let $\pi_{\cu}\colon\cu\to\cu/\cu^\fd$ be the canonial projection.
Then the subcategory $\pi_{\cu}(\rho(\cm))$ in $\cu/\cu^{\fd}$ is $d$-cluster-tilting, and by Proposition~\ref{fundamental domain} and Theorem~\ref{equivalence}, we have equivalences 
\begin{equation}\label{End U Ufd}
\pi_{\cu}(\rho(\cm))\simeq\rho(\cm)\simeq\frac{\cm}{[\cp]}.
\end{equation}

Therefore, we obtain two $(d+1)$-Calabi--Yau triangulated categories, $\cc_{\cp}$ and $\cu/\cu^{\fd}$, and they have $d$-cluster-tilting subcategories, which are equivalent to each other.
The following main result asserts that these two triangulated categories are equivalent.

\begin{theorem}\label{main theorem new}
The two categories $\cc_{\cp}$ and $\cu/\cu^{\fd}$ are triangle equivalent.\end{theorem}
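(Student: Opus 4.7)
My strategy is to invoke the Keller--Reiten cluster-Beilinson criterion (Proposition~\ref{cluster-Beilinson}): both $\cc_\cp$ and $\cu/\cu^{\fd}$ are $d$-Calabi--Yau (by Theorem~\ref{amiot} applied respectively to the triples $(\ct,\ct^{\fd},\cm)$ and $(\cu,\cu^{\fd},\rho(\cm))$), and each carries a canonical $d$-cluster-tilting subcategory, namely $\pi(\cm)/[\pi(\cp)]$ in $\cc_\cp$ and $\pi_\cu(\rho(\cm))$ in $\cu/\cu^{\fd}$, both canonically identified with $\cm/[\cp]$ via \eqref{End CP} and \eqref{End U Ufd}. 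It therefore suffices to produce a triangle functor $F\colon\cc_\cp\to\cu/\cu^{\fd}$ whose restriction to these cluster-tilting subcategories is this canonical equivalence.

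To build $F$ I would first lift the subfactor description of $\cc_\cp$ back to $\ct$. The Amiot--Guo--Keller equivalence $\pi\colon\cf\xrightarrow{\simeq}\cc$ from Theorem~\ref{amiot}(b) restricts to an equivalence $\widetilde{\cz'}\xrightarrow{\simeq}\cz'$, where $\widetilde{\cz'}:=\cf\cap\pi^{-1}(\cz')$. Observe that $\cm\subset\widetilde{\cz'}$: indeed $\pi(\cm)$ is $d$-cluster-tilting in $\cc$, hence orthogonal to $\pi(\cp)[i]$ for $1\le i\le d-1$, so $\pi(\cm)\subset\cz'$. Thus $\cc_\cp\simeq\widetilde{\cz'}/[\cp]$ as additive categories. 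Since $\rho(\cp)=0$, the composition
\[
\widetilde{\cz'}\hookrightarrow\ct\xrightarrow{\rho}\cu\xrightarrow{\pi_\cu}\cu/\cu^{\fd}
\]
kills every morphism factoring through $\cp$ and therefore descends to an additive functor $F\colon\widetilde{\cz'}/[\cp]\simeq\cc_\cp\to\cu/\cu^{\fd}$.

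The main obstacle is to verify that $F$ is a triangle functor, since the triangulation on $\cc_\cp$ is built via the Iyama--Yoshino mutation-pair machine (Theorem~\ref{triangle structure of Z/P}) while the triangulation on $\cu/\cu^{\fd}$ comes from the ambient Verdier quotient. The shift $\langle 1\rangle$ on $\cc_\cp$ is computed from left $\pi(\cp)$-approximations in $\cz'$; via the fundamental-domain equivalence these lift to left $\cp$-approximation triangles $X\to P\to X\langle 1\rangle\to X[1]$ in $\ct$ with $P\in\cp$ and $X\langle 1\rangle\in\widetilde{\cz'}$. Applying the triangle functor $\rho$ collapses such a triangle into an isomorphism $\rho(X\langle 1\rangle)\simeq\rho(X)[1]$ in $\cu$, and hence in $\cu/\cu^{\fd}$; these patch into a natural isomorphism $F\circ\langle 1\rangle\simeq [1]\circ F$. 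Running the same argument on the defining octahedron \eqref{define a triangle} (in which the middle row involves $P_X\in\cp$) shows that every distinguished triangle of $\cc_\cp$ is sent by $F$ to a genuine distinguished triangle of $\cu/\cu^{\fd}$.

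Finally, by construction $F$ restricts on $\pi(\cm)/[\pi(\cp)]\subset\cc_\cp$ to the canonical equivalence $\pi(\cm)/[\pi(\cp)]\xleftarrow{\simeq}\cm/[\cp]\xrightarrow{\simeq}\pi_\cu(\rho(\cm))$ from \eqref{End CP} and \eqref{End U Ufd}. Proposition~\ref{cluster-Beilinson} then promotes $F$ to the required triangle equivalence $\cc_\cp\xrightarrow{\simeq}\cu/\cu^{\fd}$.
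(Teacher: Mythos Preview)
Your overall strategy---use the cluster-Beilinson criterion after producing a triangle functor matching the two cluster-tilting subcategories---is exactly the paper's. The difference is the \emph{direction} of the functor, and that is where your argument breaks down.

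The paper builds $\tilde{\pi}\colon\cu\simeq\frac{\cz}{[\cp]}\to\frac{\cz'}{[\pi(\cp)]}=\cc_\cp$ directly from the triangle functor $\pi\colon\ct\to\cc$, using that $\pi$ carries $\cz$ into $\cz'$ (Lemma~\ref{density}) and that left $\cp$-approximations in $\ct$ become left $\pi(\cp)$-approximations in $\cc$ (Lemma~\ref{compatibility of approximation new}). Because both triangulations are produced by the same mutation-pair machine, triangle-functoriality is immediate; then one quotients by $\cu^{\fd}$ and applies Proposition~\ref{cluster-Beilinson}.

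Your functor runs the other way, through the fundamental domain $\cf$. The problem is your claim that the $\pi(\cp)$-approximation triangle in $\cc$ ``lifts to a left $\cp$-approximation triangle $X\to P\to X\langle 1\rangle\to X[1]$ in $\ct$ with $X\langle 1\rangle\in\widetilde{\cz'}$.'' The additive equivalence $\pi\colon\cf\to\cc$ is not a triangle equivalence, so you cannot lift triangles inside $\cf$. If instead you take the honest cone $Y$ of $X\to P$ in $\ct$, then $Y\in\cz$ by Lemma~\ref{mutation pair}, but there is no reason for $Y$ to lie in $\cf$: for $X\in\cf=\cm*\cdots*\cm[d-1]$ one only gets $Y\in\cm*\cdots*\cm[d]$. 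Consequently $Y$ is not the $\widetilde{\cz'}$-representative of $\pi(X)\langle 1\rangle$, and knowing $\rho(Y)\simeq\rho(X)[1]$ does not yield $F(X\langle 1\rangle)\simeq F(X)[1]$. The same issue obstructs your treatment of the octahedron~\eqref{define a triangle}. In short, going $\cc_\cp\to\cu/\cu^{\fd}$ forces you to compare two triangulations via a merely additive equivalence $\cf\simeq\cc$; going $\cu\to\cc_\cp$ lets the genuine triangle functor $\pi$ do the work.
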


In this sense, we say that the AGK cluster category construction Theorem \ref{amiot}
takes the silting reduction of $\ct$ with respect to $\cp$ to the Calabi--Yau reduction of $\cc$ with respect to $\pi(\cp)$.

\begin{remark}
Let $(Q,W)$ be a quiver with potential and $\Gamma=\Gamma(Q,W)$ be its complete Ginzburg dg algebra, see \cite{DerksenWeymanZelevinsky08,Ginzburg06,KellerYang11}. Assume that $H^0(\Gamma)$ is finite-dimensional. Then the triple $(\per(\Gamma),\cd_{\fd}(\Gamma),\Gamma)$ is a 3-Calabi--Yau triple.
The triangle quotient 
$$\cc(Q,W)=\per(\Gamma)/\cd_{\fd}(\Gamma)$$ is called the \emph{cluster category} of $(Q,W)$. Let $i$ be a vertex of $Q$, $e=e_i$ be the trivial path at $i$, and $(Q',W')$ be the quiver with potential obtained from $(Q,W)$ by deleting the vertex $i$. It is stated in \cite[Theorem 7.4]{Keller11} that there is a triangle equivalence between the Calabi--Yau reduction of $\cc(Q,W)$ with respect to $e_i\Gamma$ and the cluster category $\cc(Q',W')$ of $(Q',W')$. In conjunction with \cite[Corollary 7.3]{Keller11} our Theorem~\ref{main theorem new} provides an alternative proof to this statement.
\end{remark}

We start the proof of Theorem~\ref{main theorem new} with two lemmas.

\begin{lemma}\label{compatibility of approximation new}
For any $X\in\cz$ and for $i\leq d-1$, the map
\begin{equation}\label{compare approximation new}
\Hom_{\ct}(X,\cp[i])\to\Hom_{\cc}(X,\cp[i])
\end{equation}
is bijective. In particular, $\Hom_\cc(X,\cp[i])=0$ for $1\leq i\leq d-1$.
\end{lemma}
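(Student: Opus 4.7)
The plan is to exploit the canonical triangle associated with $X \in \cz$ from the t-structure $(\ct^{\le 0},\ct^{\ge 0})$ on $\ct$, combined with Proposition \ref{fundamental domain}. Concretely, for $X \in \cz$, consider the triangle
\[\sigma^{\le 0}X \to X \to \sigma^{\ge 1}X \to (\sigma^{\le 0}X)[1].\]
By Lemma \ref{truncation new}, we have $\sigma^{\le 0}X \in \cz \cap \ct^{\le 0} = \cz \cap \ct_{\le 0}$ and $\sigma^{\ge 1}X \in \ct^{\fd} \cap \cz$. Since $\sigma^{\ge 1}X \in \ct^{\fd}$, the morphism $X \to \sigma^{\le 0}X$ (viewed backwards) witnesses the isomorphism $X \cong \sigma^{\le 0}X$ in $\cc$.

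The key vanishing step is to show that $\Hom_\ct(\sigma^{\ge 1}X, \cp[j]) = 0$ for all $j \le d$. Since $\sigma^{\ge 1}X$ belongs to $\ct^{\fd}$, the relative $(d+1)$-Calabi--Yau property (CY2) gives
\[\Hom_\ct(\sigma^{\ge 1}X, \cp[j]) \simeq D\Hom_\ct(\cp, \sigma^{\ge 1}X[d+1-j]).\]
But $\sigma^{\ge 1}X \in \cz \subseteq \SS_{>0}{}^{\perp_\ct}$, which means $\Hom_\ct(\cp, \sigma^{\ge 1}X[k]) = 0$ for all $k \ge 1$. Taking $k = d+1-j$, this vanishes precisely when $j \le d$. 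I expect this to be the main (but not difficult) step: it is the only place where (CY2) genuinely interacts with the definition of $\cz$.

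Next, apply $\Hom_\ct(-, \cp[i])$ to the triangle. The relevant portion of the long exact sequence is
\[\Hom_\ct(\sigma^{\ge 1}X, \cp[i]) \to \Hom_\ct(X, \cp[i]) \to \Hom_\ct(\sigma^{\le 0}X, \cp[i]) \to \Hom_\ct(\sigma^{\ge 1}X, \cp[i+1]).\]
For $i \le d-1$, both outer terms vanish by the key vanishing (as $i+1 \le d$), so the middle map is bijective.

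Finally, since $\cp \subseteq \cm \subseteq \ct_{\ge 0} \cap \ct_{\le 0}$ (the co-heart), we have $\cp[i] \in \ct_{\ge -i} \subseteq \ct_{\ge 1-d}$ for $i \le d-1$, and $\sigma^{\le 0}X \in \ct_{\le 0}$. Hence Proposition \ref{fundamental domain} yields a bijection
\[\Hom_\ct(\sigma^{\le 0}X, \cp[i]) \xrightarrow{\sim} \Hom_\cc(\sigma^{\le 0}X, \cp[i]) \cong \Hom_\cc(X, \cp[i]),\]
where the last isomorphism uses $X \cong \sigma^{\le 0}X$ in $\cc$. Composing, we obtain the desired bijection $\Hom_\ct(X, \cp[i]) \cong \Hom_\cc(X, \cp[i])$. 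The ``in particular'' statement is immediate: for $1 \le i \le d-1$, the definition of $\cz$ ensures $\Hom_\ct(X, \cp[i]) = 0$, whence $\Hom_\cc(X, \cp[i]) = 0$.
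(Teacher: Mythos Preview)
Your proof is correct and follows essentially the same approach as the paper: truncate via the t-structure, use Lemma~\ref{truncation new} to place $\sigma^{\le 0}X$ and $\sigma^{\ge 1}X$, then apply Proposition~\ref{fundamental domain} to $\sigma^{\le 0}X$ and pass back to $X$ via the isomorphism in $\cc$. The only cosmetic difference is in the vanishing step: the paper observes that $\sigma^{\ge 1}X\in\cu^{\fd}\subset{}^{\perp_\ct}\SS$ by citing Lemma~\ref{l:fd-objects-of-Z} (which already packages the relative Calabi--Yau duality), whereas you unfold that duality argument directly to obtain $\Hom_\ct(\sigma^{\ge 1}X,\cp[j])=0$ for $j\le d$; both yield the required bijectivity of $\Hom_\ct(X,\cp[i])\to\Hom_\ct(\sigma^{\le 0}X,\cp[i])$.
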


\begin{proof}
Consider the triangle \eqref{t-structure new}, which induces a commutative diagram for $i\leq d-1$
\[\xymatrix{
\Hom_{\ct}(X,\cp[i])\ar[r]^{a_X}\ar[d]&\Hom_{\ct}(\sigma^{\le0}X,\cp[i])\ar[d]\\
\Hom_{\cc}(X,\cp[i])\ar[r]^{a_X}&\Hom_{\cc}(\sigma^{\le0}X,\cp[i]).
}\]
The upper map is bijective since $\sigma^{\geq 1}X\in\cu^{\fd}\subset{}^{\perp_\ct}\SS$
holds by Lemma \ref{truncation new} and Lemma~\ref{l:fd-objects-of-Z}, 
and the lower map is bijective since $a_X:\sigma^{\le0}X\rightarrow X$ becomes an isomorphism in $\cc$.
Further, since $\sigma^{\leq 0}X\in\ct^{\leq 0}=\ct_{\leq 0}$ and $\cp[i]\subset\ct_{\geq 1-d}$, the right map is bijective by Proposition~\ref{fundamental domain}. The bijectivity of the left map follows immediately.

As $X\in\cz$, we have $\Hom_\ct(X,\cp[>\hspace{-3pt}0])=0$. In conjunction with the first statement, this implies the second statement.
\end{proof}

\begin{lemma}\label{density}
The functor $\pi\colon\ct\to\cc$ induces a dense functor $\cz\to\cz'$.
\end{lemma}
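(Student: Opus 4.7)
The plan is to exhibit, for any $X'\in\cz'$, a preimage $Y\in\cz$ with $\pi(Y)\simeq X'$ in $\cc$. The starting point is Theorem~\ref{amiot}(b), which gives an equivalence $\pi\colon\cf\xrightarrow{\simeq}\cc$, so there exists $Y\in\cf=\cm*\cm[1]*\cdots*\cm[d-1]$ with $\pi(Y)\simeq X'$. I then check that this canonical representative already lies in $\cz$, so no further modification is needed.

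Recall that $\cz=({}^{\perp_\ct}\SS_{<0})\cap(\SS_{>0}{}^{\perp_\ct})$ unwinds to the two conditions $\Hom_\ct(Y,\cp[j])=0$ for all $j\ge 1$ and $\Hom_\ct(\cp,Y[j])=0$ for all $j\ge 1$. The second condition is free: since $Y\in\cf\subseteq\ct_{\le0}=\ct^{\le0}=\cm[<\hspace{-3pt}0]^{\perp_\ct}$, the defining property of the co-t-structure gives $\Hom_\ct(\cm,Y[>\hspace{-3pt}0])=0$, and a fortiori $\Hom_\ct(\cp,Y[>\hspace{-3pt}0])=0$.

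For the first condition I split according to the value of $j$. When $j\ge d$, I use only that $\cm$ is presilting: writing $Y$ as an iterated extension of objects in $\cm[i]$ with $0\le i\le d-1$ yields that $\Hom_\ct(Y,\cp[j])$ is controlled by $\Hom_\ct(\cm[i],\cp[j])=\Hom_\ct(\cm,\cp[j-i])$ with $j-i\ge 1$, all of which vanish. When $1\le j\le d-1$, I invoke Proposition~\ref{fundamental domain}: one has $Y\in\ct_{\le0}$ and $\cp[j]\subseteq\ct_{\le -j}\cap\ct_{\ge -j}\subseteq\ct_{\ge 1-d}$ (since $-j\ge 1-d$ in this range), so the map
\[\Hom_\ct(Y,\cp[j])\longrightarrow\Hom_\cc(Y,\cp[j])\]
is bijective; the right-hand side vanishes because $\pi(Y)\simeq X'$ and $X'\in\cz'={}^{\perp_\cc}(\cp[1]*\cdots*\cp[d-1])$. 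Combining the two ranges gives $\Hom_\ct(Y,\cp[>\hspace{-3pt}0])=0$, hence $Y\in\cz$.

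I do not foresee any genuine obstacle here; the argument is a direct verification, and the only mild subtlety is choosing the representative $Y$ inside the fundamental domain $\cf$ so that both the Calabi--Yau truncation bounds on $\Hom$ (via Proposition~\ref{fundamental domain}) and the silting vanishing of $\cm$ can be applied in their respective ranges $1\le j\le d-1$ and $j\ge d$. Once this dichotomy is set up, density of $\pi\colon\cz\to\cz'$ follows at once.
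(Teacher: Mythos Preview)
Your proof is correct and follows essentially the same approach as the paper: pick a representative in the fundamental domain $\cf$ via Theorem~\ref{amiot}(b), then verify the two orthogonality conditions defining $\cz$ by combining the co-t-structure bounds on $\cf$ (for $\Hom_\ct(\cp,Y[>\hspace{-3pt}0])$ and $\Hom_\ct(Y,\cp[\ge\hspace{-3pt}d])$) with Proposition~\ref{fundamental domain} (for $1\le j\le d-1$). The paper phrases the $j\ge d$ vanishing via the inclusion $\cf\subseteq\ct_{\ge 1-d}$ rather than the explicit filtration, but this is the same observation.
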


\begin{proof}
By Lemma~\ref{compatibility of approximation new}, $\pi$ gives a functor $\cz\to\cz'$.
We need to show that this is dense.

Fix any $Y\in\cz'$. By Theorem~\ref{amiot} (b),
there exists $X\in\cf=\ct_{\ge 1-d}\cap\ct_{\le0}$ such that $\pi(X)\simeq Y$.
Since $\cp\subset\cm$, we have $\Hom_{\ct}(\cp,X[\ge\hspace{-3pt}1])=0$ and $\Hom_{\ct}(X,\cp[\ge\hspace{-3pt}d])=0$.
By Proposition~\ref{fundamental domain}, we have $\Hom_{\ct}(X,\cp[i])\simeq\Hom_{\cc}(Y,\cp[i])=0$ for $1\leq i\leq d-1$.
Thus $X\in\cz$ and the assertion follows.
\end{proof}

Therefore the functor $\pi\colon\ct\to\cc$
induces additive functors $\cz\to\cz'$ and $\cp\to\pi(\cp)$, and further induces an additive functor
\begin{equation}\label{key functor new}
\tilde{\pi}\colon\cu\simeq\frac{\cz}{[\cp]}\longrightarrow\cc_{\cp}=\frac{\cz'}{[\pi(\cp)]}.
\end{equation}
We observed in Sections \ref{subsection: Triangle equivalence} and \ref{section: Mutation pair} that both categories $\frac{\cz}{[\cp]}$ and $\frac{\cz'}{[\pi(\cp)]}$
have structures of triangulated categories. 
Now we show the following.

\begin{proposition}\label{functor is triangle new}
The functor $\tilde{\pi}\colon\cu\rightarrow\cc_{\cp}$ is a triangle functor which is dense.
\end{proposition}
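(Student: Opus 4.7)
The plan is to verify density first, then check that $\tilde\pi$ commutes with the shift functors, and finally that it takes triangles to triangles.

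Density is almost immediate. Every object of $\cc_\cp = \cz'/[\pi(\cp)]$ is represented by an object of $\cz'$, and Lemma~\ref{density} already shows that the induced functor $\cz\to\cz'$ is dense. Thus every object of $\cc_\cp$ is isomorphic to $\tilde\pi(\bar X)$ for some $\bar X$ in $\cu$.

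To compare the shift functors, recall that the shift $\langle 1\rangle_\cu$ on $\cz/[\cp]$ is defined via a fixed triangle $X\to P_X\to X\langle 1\rangle_\cu\to X[1]$ in $\ct$ where $X\to P_X$ is a left $\cp$-approximation (Theorem~\ref{triangle structure of Z/P}), and likewise $\langle 1\rangle_{\cc_\cp}$ is defined via a triangle $\pi(X)\to Q\to \pi(X)\langle 1\rangle_{\cc_\cp}\to\pi(X)[1]$ in $\cc$ where $\pi(X)\to Q$ is a left $\pi(\cp)$-approximation in $\cz'$. By Lemma~\ref{compatibility of approximation new} (the case $i=0$), the map $\Hom_\ct(X,\cp)\to\Hom_\cc(\pi(X),\pi(\cp))$ is bijective for $X\in\cz$, so the image $\pi(X)\to\pi(P_X)$ of a left $\cp$-approximation is a left $\pi(\cp)$-approximation. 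Applying the triangle functor $\pi$ to the defining triangle of $X\langle 1\rangle_\cu$ therefore yields a triangle in $\cc$ which is (up to a unique isomorphism) a defining triangle for $\pi(X)\langle 1\rangle_{\cc_\cp}$, producing a natural isomorphism $\tilde\pi\circ\langle 1\rangle_\cu\simeq\langle 1\rangle_{\cc_\cp}\circ\tilde\pi$.

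For the preservation of triangles, recall from Theorem~\ref{triangle structure of Z/P}(b) that a typical triangle in $\cu$ is obtained from a triangle $X\xrightarrow{f}Y\xrightarrow{g}Z\xrightarrow{h}X[1]$ in $\ct$ with $X,Y,Z\in\cz$ by forming the commutative diagram with the defining triangle of $X\langle 1\rangle_\cu$ and reading off the resulting complex $X\xrightarrow{\bar f}Y\xrightarrow{\bar g}Z\xrightarrow{\bar a}X\langle 1\rangle_\cu$. Applying the triangle functor $\pi\colon\ct\to\cc$ to the same diagram produces a commutative diagram in $\cc$ with both rows triangles; since $\pi(X),\pi(Y),\pi(Z)\in\cz'$ and $\pi(X)\to\pi(P_X)$ is a left $\pi(\cp)$-approximation as just explained, the bottom row is (up to the identification of the previous paragraph) a defining triangle for $\pi(X)\langle 1\rangle_{\cc_\cp}$, so the image $\pi(X)\to\pi(Y)\to\pi(Z)\to\pi(X)\langle 1\rangle_{\cc_\cp}$ is by construction a triangle in $\cc_\cp$. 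This identifies $\tilde\pi$ of a defining triangle of $\cu$ with a defining triangle of $\cc_\cp$, so $\tilde\pi$ preserves triangles.

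The main subtlety is the compatibility of the two shift functors, since each is defined only up to a choice of approximation. The point that makes everything work is Lemma~\ref{compatibility of approximation new}, which says that on $\cz$ the map $\pi$ induces a bijection on Hom-spaces into $\cp$ and hence sends left $\cp$-approximations to left $\pi(\cp)$-approximations; once this is in hand, the identification of defining triangles (and therefore of $\langle 1\rangle$) is forced, and both the natural isomorphism and the triangle-preservation follow from $\pi$ being a triangle functor at the level of $\ct$.
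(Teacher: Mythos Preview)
Your proof is correct and follows essentially the same approach as the paper: density via Lemma~\ref{density}, and both the shift compatibility and triangle preservation via Lemma~\ref{compatibility of approximation new}, which ensures that left $\cp$-approximations in $\ct$ are sent to left $\pi(\cp)$-approximations in $\cc$. Your write-up is in fact more detailed than the paper's own argument, but the key ideas and the lemmas invoked are the same.
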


\begin{proof}
By Lemma \ref{compatibility of approximation new},
the image of a left $\cp$-approximation in $\cz$
gives a left $\pi(\cp)$-approximation in $\cz'$.
Thus the functor commutes with shifts.

Next we show that the functor sends triangles to triangles.
The triangles in $\frac{\cz}{[\cp]}$ are defined by the commutative diagram \eqref{define a triangle} in Theorem \ref{triangle structure of Z/P}. The image of \eqref{define a triangle} in $\cc$ is also a commutative
diagram of triangles with a left $\pi(\cp)$-approximation $\iota_X$ by
Lemma \ref{compatibility of approximation new}.
Thus $X\xrightarrow{\overline{f}}Y\xrightarrow{\overline{g}}Z
\xrightarrow{\overline{a}}X\langle1\rangle$ is a triangle in $\frac{\cz'}{[\pi(\cp)]}$.
Thus the assertion follows.

The functor $\tilde{\pi}\colon\cu\rightarrow\cc_{\cp}$ is dense by Lemma \ref{density}.
\end{proof}

Now we are ready to prove Theorem \ref{main theorem new}.

\begin{proof}[Proof of Theorem \ref{main theorem new}]
Since $\pi(\ct^{\fd})=0$ and $\cu^{\fd}\subset \ct^{\fd}$, we have $\tilde{\pi}(\cu^{\fd})=0$. Therefore $\tilde{\pi}$ induces a triangle
functor $\pi'\colon\cu/\cu^{\fd}\to\cc_{\cp}$. It remains to show that $\pi'$ is an equivalence.
Tracing the construction of $\pi'$, we see that $\pi'$ sends the $d$-cluster-tilting subcategory
$\pi_{\cu}(\rho(\cm))$ of $\cu/\cu^{\fd}$ to the $d$-cluster-tilting subcategory $\frac{\pi(\cm)}{[\pi(\cp)]}$ of $\cc_{\cp}$.
Moreover, we have equivalences of categories
\[
\pi_{\cu}(\rho(\cm))\stackrel{\eqref{End U Ufd}}{\simeq}
\frac{\cm}{[\cp]}\stackrel{\eqref{End CP}}{\simeq}\frac{\pi(\cm)}{[\pi(\cp)]},
\]
whose composition is induced by $\pi'$.
Thus the triangle functor $\pi'\colon\cu/\cu^{\fd}\to\cc_{\cp}$ is an equivalence by Proposition \ref{cluster-Beilinson}.
\end{proof}

\section{Conjectures of Auslander--Reiten and Tachikawa}\label{section: homological conjectures}

In this section, we discuss the relationship between silting theory and the conjecture of Tachikawa and that of Auslander--Reiten.

\medskip

Let $k$ be a field and $A$ be a finite-dimensional $k$-algebra and let $n$ be the number
of pairwise non-isomorphic simple $A$-modules.
Motivated by Tachikawa's study \cite{Tachikawa73} on the famous Nakayama conjecture, Auslander and Reiten proposed the following conjecture:

\medskip\noindent
{\bf The Auslander--Reiten Conjecture} (\cite{AuslanderReiten75})
\emph{If $X\in\mod A$ satisfies $\Ext^i_A(X,X\oplus A)=0$ for all $i>0$,
then $X$ is a projective $A$-module.}

\medskip
Now we pose the following conjectures in the context of silting theory.

\begin{conjecture}\label{Tachikawa 1.5}
$\cd^{\rm b}(\mod A)$ has no presilting object $X$ such that $\add X$ contains $\proj A$ as a proper subcategory.
\end{conjecture}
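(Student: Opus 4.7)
The plan is to relate Conjecture \ref{Tachikawa 1.5} to the Auslander--Reiten conjecture stated immediately above; under suitable reductions, the Auslander--Reiten conjecture implies Conjecture \ref{Tachikawa 1.5}. Suppose for contradiction that $X$ is a presilting object in $\cd^{\rm b}(\mod A)$ with $\proj A \subsetneq \add X$. Writing $X = A \oplus Y$ and replacing $Y$ by an indecomposable direct summand not lying in $\add A$ (which inherits the presilting property as a summand), I would reduce to the case that $Y$ is indecomposable and $Y \notin \add A$.

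I would first locate $Y$ in the standard $t$-structure on $\cd^{\rm b}(\mod A)$. The presilting condition yields $\Hom(A, Y[i]) = 0$ for $i > 0$, i.e.\ $H^i(Y) = 0$ for $i > 0$, so $Y \in \cd^{\leq 0}(\mod A)$. The remaining presilting conditions read $\Hom(Y, A[i]) = 0 = \Hom(Y, Y[i])$ for $i > 0$. If I can show that $Y$ is concentrated in degree $0$, these two vanishings become $\Ext^i_A(Y, A \oplus Y) = 0$ for all $i > 0$, precisely the hypothesis of the Auslander--Reiten conjecture; hence $Y$ would be projective, contradicting $Y \notin \add A$.

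The main obstacle is the reduction of $Y$ to an honest module. Setting $M := H^0(Y)$, the truncation triangle
\[
\tau^{\leq -1} Y \longrightarrow Y \longrightarrow M \longrightarrow (\tau^{\leq -1} Y)[1]
\]
suggests an induction on $\ell(Y) := \max\{k \geq 0 : H^{-k}(Y) \neq 0\}$, aiming to show that the lowest non-zero negative-degree cohomology of $Y$ must vanish under the presilting hypotheses; transferring the vanishings $\Hom(Y, A[>\hspace{-3pt}0]) = 0$ and $\Hom(Y, Y[>\hspace{-3pt}0]) = 0$ through this triangle looks delicate when $A$ has infinite global dimension, and this is where I expect the argument to be hardest. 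An alternative route, under the additional assumption that $A$ is right Iwanaga--Gorenstein (so that condition (P2) holds for $\cp = \proj A$), is to invoke silting reduction directly: by Theorem \ref{t:silting-reduction} combined with Buchweitz' equivalence (Example \ref{buchweitz}), presilting objects of $\cd^{\rm b}(\mod A)$ containing $A$ correspond bijectively to presilting objects of $\underline{\CM} A$, and Conjecture \ref{Tachikawa 1.5} becomes the statement that $\underline{\CM} A$ admits no nonzero presilting object---a strong rigidity statement on stable Cohen--Macaulay modules that would follow from the Auslander--Reiten conjecture together with the Auslander--Reiten translate structure available on $\underline{\CM} A$.
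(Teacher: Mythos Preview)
This statement is a \emph{conjecture} in the paper, not a theorem: the paper does not provide a proof of it. What the paper does prove (in the theorem immediately following the two conjectures) is the chain of implications
\[
\text{Conjecture \ref{Tachikawa 2}} \;\Rightarrow\; \text{Conjecture \ref{Tachikawa 1.5}} \;\Rightarrow\; \text{the Auslander--Reiten Conjecture},
\]
i.e.\ Conjecture \ref{Tachikawa 1.5} \emph{implies} the Auslander--Reiten Conjecture. Your proposal attempts the reverse implication, that the Auslander--Reiten Conjecture implies Conjecture \ref{Tachikawa 1.5}, which the paper neither claims nor proves.

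Your argument has a genuine gap precisely where you flag it. Given an indecomposable presilting summand $Y\notin\add A$ of $X$ with $A\in\add X$, the vanishing $\Hom(A,Y[>\hspace{-3pt}0])=0$ indeed forces $Y\in\cd^{\le0}(\mod A)$, but the remaining presilting conditions $\Hom(Y,A[>\hspace{-3pt}0])=0=\Hom(Y,Y[>\hspace{-3pt}0])$ do not in general force $Y$ to be concentrated in a single degree. The induction you sketch on $\ell(Y)$ via the truncation triangle does not go through: passing the vanishing of $\Hom(Y,Y[>\hspace{-3pt}0])$ along $\tau^{\le -1}Y\to Y\to H^0(Y)$ produces cross-terms like $\Hom(\tau^{\le-1}Y,H^0(Y)[i])$ that need not vanish without further hypotheses (e.g.\ finite global dimension), so there is no mechanism to kill the lowest nonzero cohomology. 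Your alternative route via silting reduction under an Iwanaga--Gorenstein hypothesis correctly identifies the equivalent reformulation (Conjecture \ref{Tachikawa3.5} in the self-injective case), but that reformulation is itself an open conjecture, so nothing is gained. In short, you are attempting to prove an open problem by reducing it to another open problem, and the reduction itself is incomplete.
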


\begin{conjecture}\label{Tachikawa 2}
There does not exist a thick subcategory $\ct$ of $\cd^{\rm b}(\mod A)$
containing $\ck^{\rm b}(\proj A)$ such that the Grothendieck group
$K_0(\ct)$ is a free abelian group with rank strictly bigger than $n$.
\end{conjecture}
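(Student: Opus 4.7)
The plan is to approach Conjecture \ref{Tachikawa 2} by exploiting the silting reduction developed in Section~\ref{s:subfactor-category}, and to reduce it, in the presence of a suitable silting subcategory inside $\ct$, to Conjecture \ref{Tachikawa 1.5}. First I would verify that $\cp = \proj A$ satisfies hypotheses (P1) and (P2) inside any thick subcategory $\ct \subseteq \cd^{\rm b}(\mod A)$ containing $\ck^{\rm b}(\proj A)$: (P1) holds because the finite-dimensionality of $A$ makes $\proj A = \add A$ functorially finite in $\cd^{\rm b}(\mod A)$, and therefore in $\ct$; (P2) holds because every object of $\cd^{\rm b}(\mod A)$ has bounded cohomology. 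Thus Theorems~\ref{equivalence}, \ref{t:triangle-equivalence} and \ref{t:silting-reduction} all apply to the pair $(\ct,\cp)$, and the silting reduction $\cu := \ct/\ck^{\rm b}(\proj A)$ is realised as the subfactor $\cz/[\cp]$ with $\cz = ({}^{\perp_\ct}\SS_{<0}) \cap (\SS_{>0}{}^{\perp_\ct})$.

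Second I would analyse the localisation sequence of Grothendieck groups
\[
K_0(\ck^{\rm b}(\proj A)) \xrightarrow{\iota} K_0(\ct) \longrightarrow K_0(\cu) \longrightarrow 0.
\]
Composing $\iota$ with $K_0(\ct) \to K_0(\cd^{\rm b}(\mod A))$ yields the Cartan map $\mathbb{Z}^n \to \mathbb{Z}^n$, so $\iota$ is injective whenever the Cartan matrix is non-singular (e.g.\ for $A$ of finite global dimension; the singular case requires separate bookkeeping of torsion). Under the assumption that $K_0(\ct)$ is free abelian of rank $m > n$, this gives a short exact sequence $0 \to \mathbb{Z}^n \to K_0(\ct) \to K_0(\cu) \to 0$ forcing $\operatorname{rank} K_0(\cu) = m - n \ge 1$. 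The plan is then to lift a basis element of $K_0(\cu)$, via Theorem~\ref{equivalence}, to an object $Y \in \cz$; the membership in $\cz$ supplies, for free, the vanishings $\Hom_\ct(A,Y[>\hspace{-3pt}0]) = 0 = \Hom_\ct(Y,A[>\hspace{-3pt}0])$. The only outstanding ingredient preventing us from declaring $A \oplus Y$ a presilting object of $\cd^{\rm b}(\mod A)$ properly extending $\proj A$ (which would contradict Conjecture \ref{Tachikawa 1.5}) is the self-rigidity $\Hom_\ct(Y,Y[>\hspace{-3pt}0]) = 0$.

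The hard part will be exactly this rigidity gap: the class-level hypothesis that $K_0(\ct)$ is free of rank $>n$ does not, a priori, produce rigid lifts in $\cz$. A more robust variant of the strategy is to aim instead to show that the hypothesis forces $\ct$ to carry a silting subcategory $\cm$ containing $\proj A$. Once such an $\cm$ exists, Theorem~\ref{t:silting-reduction} yields a bijection with silting subcategories of $\cu$, and because silting subcategories furnish bases of the Grothendieck group, $\cm$ has exactly $m$ indecomposables, $m - n$ of which lie outside $\proj A$; the resulting presilting object in $\cd^{\rm b}(\mod A)$ then directly violates Conjecture \ref{Tachikawa 1.5}. Producing a silting subcategory of $\ct$ out of the sole hypothesis that $K_0(\ct)$ is free abelian appears to need an extra structural input, most naturally a bounded co-$t$-structure on $\ct$ with idempotent complete co-heart (cf.\ Proposition~\ref{from silting to co-t-structure} and Theorem~\ref{c:idempotent-completeness-of-co-heart-implies-idempotent-completeness}); canonically attaching such a structure to an arbitrary thick subcategory with free Grothendieck group is, in my view, the genuinely new ingredient this conjecture demands, and this is where I expect the main obstacle to lie.
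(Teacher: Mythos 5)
The statement you are addressing is a \emph{conjecture}: the paper offers no proof of it, and indeed it implies (via Conjecture~\ref{Tachikawa 1.5}) the Auslander--Reiten conjecture, which is open. So no complete argument should be expected, and your proposal, as you yourself concede in the final paragraph, does not close the essential gap. Note also that your reduction runs in the opposite direction to the implication the paper actually establishes: the paper proves Conjecture~\ref{Tachikawa 2} $\Rightarrow$ Conjecture~\ref{Tachikawa 1.5}, by observing that a counterexample $X$ to the latter gives the thick subcategory $\thick(X\oplus A)$, whose Grothendieck group is free of rank $>n$ by \cite[Theorem 2.27]{AI}. You attempt the converse, Conjecture~\ref{Tachikawa 1.5} $\Rightarrow$ Conjecture~\ref{Tachikawa 2}; even if completed, this would only show the two conjectures are equivalent, not prove either. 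The step you isolate as the obstacle --- extracting a presilting or silting object from the bare hypothesis that $K_0(\ct)$ is free of rank $>n$ --- is precisely the entire content of the conjecture: a class in the Grothendieck group carries no rigidity information, and lifting a basis element of $K_0(\cu)$ to an object $Y\in\cz$ gives no control over $\Hom_\ct(Y,Y[>\hspace{-3pt}0])$. Moreover the localisation sequence $K_0(\SS)\to K_0(\ct)\to K_0(\cu)\to 0$ is only right exact, so even your rank count $\operatorname{rank}K_0(\cu)=m-n$ requires the injectivity you flag as conditional.

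There is also a concrete error in your first step: condition (P2) does \emph{not} hold for $\cp=\proj A$ in $\cd^{\rm b}(\mod A)$ for an arbitrary finite-dimensional algebra $A$. While $\Hom_\ct(A,X[\ell])\cong H^\ell(X)=0$ for $\ell\gg0$ by boundedness of cohomology, the other required vanishing $\Hom_\ct(X,A[\ell])\cong\Ext^\ell_A(X,A)=0$ for $\ell\gg0$ can fail when $A$ is not Iwanaga--Gorenstein; this is exactly why Example~\ref{buchweitz} imposes the Gorenstein hypothesis. Hence the silting-reduction machinery of Section~\ref{s:subfactor-category} is not available for the pair $(\ct,\proj A)$ in the generality the conjecture demands, and your plan would need to be restricted or substantially modified even before confronting the rigidity gap.
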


We have the following observation (see also Section 4 of \cite{Happel93}).

\begin{theorem}
Conjecture \ref{Tachikawa 2} $\Rightarrow$ Conjecture \ref{Tachikawa 1.5} $\Rightarrow$ the Auslander--Reiten Conjecture.
\end{theorem}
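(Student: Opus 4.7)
My plan is to handle the two implications separately; both are short, and all the conceptual content sits in the first one via a standard $K_0$-computation for bounded co-t-structures.

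For the easier implication, Conjecture \ref{Tachikawa 1.5} $\Rightarrow$ the Auslander--Reiten Conjecture, I would argue as follows. Given $X \in \mod A$ with $\Ext^i_A(X, X \oplus A) = 0$ for all $i > 0$, set $Y := X \oplus A$ and view it as an object of $\cd^{\rm b}(\mod A)$. Projectivity of $A$ yields $\Ext^i_A(A,Y) = 0$ for $i>0$, while the hypothesis gives $\Ext^i_A(X,Y) = 0$ for $i>0$; together these say $\Hom_{\cd^{\rm b}(\mod A)}(Y,Y[i])=0$ for $i>0$, so $Y$ is presilting. Because $\add Y \supseteq \add A = \proj A$, Conjecture \ref{Tachikawa 1.5} forbids this containment from being strict, forcing $X \in \proj A$.

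For the harder implication, Conjecture \ref{Tachikawa 2} $\Rightarrow$ Conjecture \ref{Tachikawa 1.5}, I would argue by contrapositive. Suppose there exists a presilting $X \in \cd^{\rm b}(\mod A)$ with $\add X \supsetneq \proj A$, and set $\ct := \thick_{\cd^{\rm b}(\mod A)}(X)$. Then $\ct \supseteq \thick(\proj A) = \ck^{\rm b}(\proj A)$ is automatic. By construction $X$ is a silting object of $\ct$, so by Proposition \ref{from silting to co-t-structure} the category $\ct$ carries a bounded co-t-structure whose co-heart is $\add X$.

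The key step, and the only non-formal one, is to read off $K_0(\ct)$. The standard fact about bounded co-t-structures (due to Bondarko) is that the natural map from the split Grothendieck group of the co-heart to $K_0$ is an isomorphism; combined with the fact that $\cd^{\rm b}(\mod A)$ (and hence $\add X$) is Krull--Schmidt, this gives
\[
K_0(\ct) \;\cong\; K_0^{\mathrm{split}}(\add X) \;\cong\; \mathbb{Z}^{(\#\,\text{indec.\ summands of }X)}.
\]
Since $\add X$ strictly contains $\proj A$, the number of pairwise non-isomorphic indecomposable summands of $X$ is strictly greater than $n$, so $\mathrm{rk}\,K_0(\ct) > n$, contradicting Conjecture \ref{Tachikawa 2}. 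The main obstacle is thus making clean use of the Bondarko $K_0$-identification; the rest is a direct consequence of $X$ being silting inside $\thick X$ and of the Krull--Schmidt property of $\cd^{\rm b}(\mod A)$.
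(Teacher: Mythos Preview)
Your proof is correct and follows essentially the same approach as the paper: both implications are handled the same way, and the key input for Conjecture~\ref{Tachikawa 2} $\Rightarrow$ Conjecture~\ref{Tachikawa 1.5} is the identification of $K_0(\thick X)$ with the free abelian group on the indecomposable summands of the silting object $X$, which the paper cites as \cite[Theorem~2.27]{AI} and you attribute to Bondarko via the bounded co-t-structure. One minor remark: your contrapositive for the first implication is actually stated more precisely than the paper's own proof, which begins from the Auslander--Reiten hypothesis (a non-projective module $X$ with $\Ext^i_A(X,X\oplus A)=0$) rather than from an arbitrary presilting object with $\add X\supsetneq\proj A$, and thus literally only establishes the composite implication.
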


\begin{proof}

To prove the first implication, assume that a non-projective $A$-module $X$ satisfies $\Ext^i_A(X,X\oplus A)=0$
for all $i>0$. Then $\ct:=\thick(X\oplus A)$ is a thick subcategory of
$\cd^{\rm b}(\mod A)$ containing $\ck^{\rm b}(\proj A)$, and $X\oplus A$ is a silting object in $\ct$. It is shown in \cite[Theorem 2.27]{AI}
that the Grothendieck group $K_0(\ct)$ is a free abelian group and
the rank is equal to the number of non-isomorphic indecomposable direct
summands of $X\oplus A$. Thus the assertion follows.

To obtain the second implication it suffices to observe that  if $X\in\mod A$ is not projective and satisfies $\Ext^i_A(X,X\oplus A)=0$ for all $i>0$, then $X\oplus A$ is a presilting object of $\cd^{\bo}(\mod A)$ such that $\add(X\oplus A)$ contains $\proj A$ as a proper subcategory.
\end{proof}

When $A$ is self-injective, the Auslander--Reiten Conjecture takes the following form due to Tachikawa. 

\medskip\noindent
{\bf The Tachikawa Conjecture} (\cite{Tachikawa73})
\emph{Assume that $A$ is self-injective. If $X\in\mod A$ satisfies $\Ext^i_A(X,X)=0$ for all $i>0$, then $X$ is a projective module.}

\medskip
Formulated in terms of presilting objects, it has the form:

\begin{conjecture}\label{Tachikawa3.5}
Assume that $A$ is self-injective. Then the stable category $\underline{\mod}A$ has no non-trivial presilting objects.
\end{conjecture}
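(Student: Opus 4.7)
My plan is to prove that any presilting object $X\in\underline{\mod}A$ is zero in the stable category, i.e.\ that $X$ is projective as an $A$-module. The approach proceeds by first translating the presilting condition into module-theoretic Ext-vanishing and then attempting to extract a nonzero Ext element to contradict it.

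First I would translate the presilting condition. Since $A$ is self-injective, the stable category $\underline{\mod}A$ is triangulated with shift functor $[1]=\Omega^{-1}$, and for each $i\ge 1$ there is the standard natural isomorphism
\[
\Hom_{\underline{\mod}A}(X,X[i])\cong\Ext^i_A(X,X).
\]
Hence $X$ is presilting in $\underline{\mod}A$ exactly when $\Ext^i_A(X,X)=0$ for all $i>0$, and $X$ is zero in $\underline{\mod}A$ exactly when $X$ is projective in $\mod A$. The statement to prove is therefore: every $X\in\mod A$ satisfying $\Ext^i_A(X,X)=0$ for all $i>0$ must be projective.

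Next I would apply the silting machinery of this paper to squeeze such an $X$. Suppose towards a contradiction that $X$ is presilting in $\underline{\mod}A$ and non-projective. Then $X\oplus A$ is a presilting object of $\cd^{\rm b}(\mod A)$ whose additive closure properly contains $\proj A$, contradicting Conjecture~\ref{Tachikawa 1.5}; equivalently, via Example~\ref{buchweitz}, the image of $X$ is a non-trivial presilting object of the singularity category $\cd^{\rm b}(\mod A)/\ck^{\rm b}(\proj A)\simeq\underline{\mod}A$. To actually produce the contradiction directly, I would exploit the additional categorical structure of $\underline{\mod}A$ for self-injective $A$---the Auslander--Reiten translation, the Nakayama functor, and in the symmetric case the $0$-Calabi--Yau duality $\Hom_{\underline{\mod}A}(X,X[i])\cong D\Hom_{\underline{\mod}A}(X,X[-i])$---together with an induction on some complexity invariant (length in the AR-quiver, or length of a minimal projective resolution) to manufacture a nonzero element of some $\Ext^i_A(X,X)$ with $i>0$, contradicting the hypothesis.

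The main obstacle is that this last extraction is precisely the self-injective case of the Tachikawa Conjecture, which has stood open since 1973. The silting-reduction framework of this paper places the statement in a cleaner triangulated setting and makes transparent its equivalence with Conjecture~\ref{Tachikawa 1.5} (and hence with the Auslander--Reiten Conjecture), but it does not by itself supply the missing nonzero Ext-element in full generality. A realistic implementation of the plan therefore settles for partial verification: the conjecture is known for self-injective algebras of finite representation type (via their Auslander algebras), for Nakayama algebras (by direct syzygy analysis), and for symmetric algebras of small Calabi--Yau dimension with additional rigidity; the general case remains genuinely open and would require a fundamentally new input beyond the silting-theoretic reformulation itself.
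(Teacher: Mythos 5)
The statement you were asked to prove is stated in the paper as a \emph{conjecture}, and the paper offers no proof of it: it only observes that, via the standard isomorphisms $\Hom_{\underline{\mod}A}(X,X[i])\cong\Ext^i_A(X,X)$ for $i>0$ and the equivalence ``$X$ is trivial in $\underline{\mod}A$'' $\Leftrightarrow$ ``$X$ is projective'', the statement is a reformulation of the Tachikawa Conjecture, which has been open since 1973. Your proposal carries out exactly this translation correctly, correctly identifies the equivalence with Conjecture~\ref{Tachikawa 1.5} and the Auslander--Reiten Conjecture, and rightly concludes that the remaining content is the open conjecture itself; there is no gap relative to the paper, since the paper establishes nothing beyond this reformulation either.
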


By Theorems~\ref{t:silting-reduction} and~\ref{buchweitz}, this is equivalent to Conjecture~\ref{Tachikawa 1.5} for self-injective algebras. What we know is the following.

\begin{proposition} \emph{(\cite[Example 2.5]{AI})}
Assume that $A$ is self-injective. Then the stable category $\underline{\mod}A$ has no silting objects unless $A$ is semi-simple.
\end{proposition}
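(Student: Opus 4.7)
I will argue by contradiction. Suppose $A$ is self-injective but not semi-simple, and let $M$ be a silting object of $\ct:=\underline{\mod}\,A$. Since $A$ is not semi-simple, $\ct \ne 0$, and since $\thick M = \ct$, $M \ne 0$. The strategy is to combine the bounded co-t-structure produced by a silting object with Serre duality on the stable category of a self-injective algebra, in order to force $M$ to be zero.

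By Proposition~\ref{from silting to co-t-structure}, the silting object $M$ yields a bounded co-t-structure $(\ct_{\ge 0}, \ct_{\le 0})$ on $\ct$ with co-heart $\add M$; in particular $\ct=\thick M$ and $\underline{\Hom}_\ct(M, M[n]) = 0$ for all $n > 0$. On the other hand, self-injectivity of $A$ equips $\ct$ with a Serre functor $S = \nu[-1]$, where $\nu$ denotes the Nakayama functor; this follows from the formula $\tau = \Omega^2\nu$ for the Auslander--Reiten translate in the stable category, combined with the general identity $S = \tau\circ [1]$. Applying Serre duality to the presilting vanishing yields, for each $n > 0$,
\[
0 \;=\; \underline{\Hom}_\ct(M, M[n]) \;\cong\; D\,\underline{\Hom}_\ct\bigl(M, \nu M[-n-1]\bigr),
\]
so $\underline{\Hom}_\ct(M, \nu M[m]) = 0$ for every $m \le -2$. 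The same isomorphism at $n = 0$ produces the nonzero pairing $\underline{\End}_\ct(M) \cong D\,\underline{\Hom}_\ct(M, \nu M[-1])$.

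The plan is then to reach a contradiction by iteration. Since $\nu$ is an auto-equivalence of $\ct$, every iterate $\nu^k M$ is silting, and by boundedness of the co-t-structure each $\nu^k M$ lies in $\add M[-N_k] \ast \cdots \ast \add M[N_k]$ for some $N_k \ge 0$. I would combine the Hom-vanishing constraints above with these bounded filtrations and the Serre duality isomorphism to show that the filtrations of $\nu^k M$ by shifts of $M$ are forced to occupy an ever-growing range as $k \to \infty$, contradicting boundedness. This would force $M=0$, hence $\ct = 0$ and $A$ semi-simple. The main obstacle is precisely this iteration: converting the pointwise Hom-vanishing and Serre-duality constraints into a quantitative violation of the bounded co-t-structure for some $\nu^k M$.
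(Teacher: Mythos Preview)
The paper does not give a proof of this proposition; it is simply cited from \cite[Example 2.5]{AI}. So let me compare your proposal with the standard short argument.

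Your setup is correct: the Serre functor on $\underline{\mod}\,A$ is $S=\nu[-1]$, and the computation $\underline{\Hom}(M,\nu M[m])=0$ for $m\le -2$ is valid. The gap is exactly where you locate it, and I do not believe the iteration you sketch can be completed as stated. Two concrete obstructions. First, ``boundedness'' of the co-t-structure means only that \emph{each} object lies in some $\ct_{\ge -N}\cap\ct_{\le N}$; there is no uniform bound that an unbounded sequence $(\nu^k M)_k$ could violate. Second, since $\nu$ preserves $k$-dimension and there are only finitely many isomorphism classes of $A$-modules of any fixed dimension, one has $\nu^r M\cong M$ for some $r\ge 1$, so the sequence $(\nu^k M)_k$ is periodic and its co-t-structure range cannot grow. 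More structurally, Serre duality alone yields only the symmetry $\dim\underline{\Hom}(M,\nu^{k}M[m])=\dim\underline{\Hom}(M,\nu^{1-k}M[-m-1])$, and together with the presilting vanishing this does not force a contradiction: for instance $\cd^{\rm b}(\mod k)$ has both a Serre functor ($S=\mathrm{id}$) and a silting object and is nonzero. So an abstract ``Serre functor $+$ silting $\Rightarrow$ zero'' argument cannot succeed without further input specific to $\underline{\mod}\,A$.

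The intended argument bypasses the Serre functor entirely and uses only Lemma~\ref{vanishing for >>}(a): if $\underline{\mod}\,A$ had a silting object, then $\underline{\Hom}(X,Y[i])=0$ for $i\gg 0$ and all $X,Y$. But if $A$ is self-injective and not semisimple, pick a non-projective simple module $S$. For each $i\ge 1$ the (minimal) syzygy $\Omega^i S$ is a nonzero indecomposable non-projective module, so it surjects onto some simple; this surjection cannot factor through a projective, since any map from a non-projective indecomposable to a projective has image in the radical. Hence $\underline{\Hom}(\Omega^i S,\,A/\operatorname{rad}A)\ne 0$, that is, $\underline{\Hom}(S,(A/\operatorname{rad}A)[i])\ne 0$ for all $i\ge 1$, contradicting Lemma~\ref{vanishing for >>}(a). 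This is essentially a two-line proof once Lemma~\ref{vanishing for >>} is in hand; no Serre duality is needed.
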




\def\cprime{$'$}
\providecommand{\bysame}{\leavevmode\hbox to3em{\hrulefill}\thinspace}
\providecommand{\MR}{\relax\ifhmode\unskip\space\fi MR }
\providecommand{\MRhref}[2]{%
  \href{http://www.ams.org/mathscinet-getitem?mr=#1}{#2}
}
\providecommand{\href}[2]{#2}

\end{document}